\numberwithin{equation}{section}
\definecolor{Maroon}{cmyk}{0, 0.87, 0.68, 0.32}
\definecolor{DarkRed}{RGB}{165,0,0}  
\definecolor{RoyalBlue}{cmyk}{1, 0.50, 0, 0}
\definecolor{Black}{cmyk}{0, 0, 0, 0}
\definecolor{pinkish}{RGB}{255, 100, 180}
\definecolor{orange}{rgb}{216, 64, 0}
\newcommand{\cC}{{\ensuremath{\mathcal C}} }
\newcommand{\cL}{{\ensuremath{\mathcal L}} }
\newcommand{\cN}{{\ensuremath{\mathcal N}} }
\newcommand{\cW}{{\ensuremath{\mathcal W}} }
\newcommand{\sB}{{\ensuremath{\mathsf B}} }
\newcommand{\sY}{{\ensuremath{\mathsf Y}} }
\newcommand{\N}{\mathbb{N}}
\newcommand{\Z}{\mathbb{Z}}
\newcommand{\R}{\mathbb{R}}
\renewcommand{\tilde}{\widetilde}
\renewcommand{\hat}{\widehat}
\newcommand{\dd}{{\ensuremath{\mathrm d}} }
\newcommand{\I}[1]{\mathbf 1_{\{\bot {#1}\}}}
\newcommand{\1}{\mathbf 1_{\bot}}
\newcommand{\IR}{\mathbf{1}_{\bot,R}}
\newcommand{\hIR}{\hat{\mathbf{1}}_{\bot,R}}
\newcommand{\sign}{\text{sign}}
\renewcommand{\epsilon}{\varepsilon}
\renewcommand{\phi}{\varphi}
\newcommand{\Wa}{\mathcal{W}}  
\newcommand{\Pru}{\mathfrak{P}}  
\newcommand{\dk}{\frac{{\rm d}k}{(2\pi)^d}} 
\newcommand{\sss}   { \scriptscriptstyle }
\newcommand{\e}{\operatorname{e}}
\renewcommand{\phi}{\varphi}
\newcommand{\bk}{{\boldsymbol{\rm k}}}
\newcommand{\bT}{{\boldsymbol{\rm T}}}
\newcommand{\cnT}[1]{{\hat{\boldsymbol{\rm c}}}^{\sss (#1)}}
\newtheorem{theorem}{Theorem}[section]
\newtheorem{definition}{Definition}[section]
\newtheorem{lemma}[theorem]{Lemma}
\newtheorem{corollary}[theorem]{Corollary}
\newtheorem{proposition}[theorem]{Proposition}
\newtheorem{remark}{Remark}[section]
\title{Prudent walk in dimension six and higher}
\author {Markus HEYDENREICH
\thanks{
{Universität Augsburg, Institut für Mathematik, 
86135 Augsburg, Germany.
}
 Email: \texttt{markus.heydenreich@uni-a.de}, Orcid number : 0000-0002-3749-7431
}
\and Lorenzo TAGGI
\thanks{{Sapienza Università di Roma,
Dipartimento di Matematica.
Piazzale Aldo Moro 5,
00186, Roma, Italy.
}
 Email: \texttt{lorenzo.taggi@uniroma1.it}, Orcid number : 0000-0002-7085-9764
 }
\and Niccol\`o TORRI
\thanks{{MODAL'X, UMR 9023, UPL, Univ. Paris Nanterre, F92000 Nanterre France.}
 Email: \texttt{ntorri@parisnanterre.fr}, 
 Orcid number : 0000-0002-4778-1305}
}
\date{}
\begin{document}

\maketitle

\begin{abstract} 
We study the high-dimensional uniform prudent self-avoiding walk, which assigns equal probability to all nearest-neighbor self-avoiding paths of a fixed length that respect the prudent condition, namely, the path cannot take any step in the direction of a previously visited site.
We prove that the prudent self-avoiding walk converges to Brownian motion under diffusive scaling if the dimension is large enough. The same result is true for weakly prudent walk in dimension $d>5$.

A challenging property of the high-dimensional prudent walk is the presence of an infinite-range self-avoidance constraint. 
Interestingly, as a consequence of such a strong self-avoidance constraint, the upper critical dimension of the prudent walk is five, and thus greater than for the classical self-avoiding walk.

\medskip

\emph{Keywords}: $\, $ Prudent walk $\cdot$ Self-avoiding random walk $\cdot$ Lace Expansion $\cdot$ Scaling limit $\cdot$ Critical dimension 

\smallskip

\emph{Mathematics Subject Classification}: $\, $ 82B41 $\cdot$ 60G50
\end{abstract}


\section{Introduction}
Prudent walk is a class of self-repellent random walks where the walk cannot take increments pointing in the direction of its range. This results in an infinite-range repellence condition.
The prudent walk was originally introduced in \cite{TD87a, TD87b} under the name of \textit{self-directed walk} and in \cite{SSK01} under the name  \textit{outwardly directed self-avoiding walk} as a class of self-avoiding walks which are simple to modelize. 
In the last 20 years this walk has attracted the attention of the combinatorics community, see e.g.\ \cite{BI15,B10,DG08}, and also of the probability community, see e.g.\ \cite{BFV10,CNPT18, PTS16,PT16}.

Let us stress that the prudent condition can be defined in two different ways: in its original formulation \cite{TD87a, TD87b}, prudent random walk is a random walk that chooses its direction uniformly among the admissable moves, and is thus a stochastic process. This model is called \textit{kinetic prudent walk}, and was considered in \cite{BFV10}. 
Alternatively, we fix a length $n$ and choose uniformly a prudent trajectory of length $n$, we call this the \textit{uniform prudent walk}. This latter model has been considered by the combinatorics community and also investigated probabilistically \cite{PTS16, PT16}. In this article we consider the uniform prudent walk.

In dimension $d=2$ the scaling limit of the prudent random walk was identified in \cite{BFV10, PTS16}. 
The present work concentrates on the high-dimensional case. Similar to other walks without self-intersections (most notably self-avoiding walk) there exists an upper critical dimension $d_c$ such that in dimension $d>d_c$ prudent walk is macroscopically very similar to simple random walk. This implies, in particular, that in high dimensions the various self-repellent walk models are all very similar. Most interestingly, we find strong evidence that this upper critical dimension for prudent walk is $d_c=5$, rather than 4 as for self-avoiding walk; the extra dimension results from the infinite-range repellence condition of prudent walks (see Section \ref{sect:criticaldimension} for further discussion of the upper critical dimension). 


Our approach is based on the \textit{lace expansion}. The lace expansion method was introduced by Brydges and Spencer \cite{BrydgSpenc85} to study weakly-self avoiding random walk.
The method was widely exploited for the self-avoiding walk, percolation, lattice trees and lattice animals, and the Ising model, see \cite{HeydeHofst17,  MadraSlade93, Sakai07, SladeBook} and references therein. 
The present work is the first one where we employ the lace expansion to a model with infinite range interaction. 

The lace expansion has also been exploited successfully to investigate critical percolation in high dimension. It might very well be that the methods proposed in this work to allow for an investigation of percolation with infinite-range interactions as studied by Hilario and Sidoravicius \cite{HILARIO20195037}.


\subsection{Main results}
We let $\Wa_n$ be the set of $n$-step nearest-neighbor paths on the hypercubic lattice $\Z^d$ starting at the origin, see \eqref{def:Wn}. We call a path $w=(w(0),w(1),\dots,w(n))$ \emph{prudent} if 
\(w(s)\not\in w(t)+\N_0 \big(w(t)-w(t-1)\big)\)
{for all $0\le s<t\le n$}; see Figure \ref{figPW} for an example. 
\begin{figure}
 \centering
 \includegraphics[scale=1]{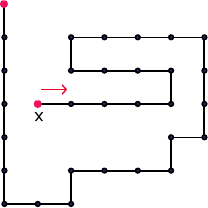}
 \caption{An example of prudent walk path starting at $x$. When traversing the path starting from $x$, the tip of the path at any given moment does not point to the range of the path until that time.}
 \label{figPW}
\end{figure}

Our main result is convergence of the rescaled prudent walk to Brownian motion in high dimension. To this end, let $\mathbb D(A,B)$ be the space of functions $f\colon A\to B$ that are left continuous and have limits from the right, equipped with the Skorokhod $J_1$-topology, see \cite{Billi68}. 

For a prudent walk $w$, we denote the space-time rescaled variable 
\begin{equation}\label{eq:defXn}
X_n(t):=\frac1{\sqrt{Kn}}w\big(\lfloor nt\rfloor\big),\quad t\in[0,1],
\end{equation} 
for a certain constant $K>0$ defined in \eqref{eq:DefK} below.
We consider $X_n$ as a $\mathbb D([0,1],\R)$-valued random variable with respect to the uniform measure $\langle\; \cdot\; \rangle_n$ on the set of $n$-step prudent walks.  

\begin{theorem}[Convergence to Brownian motion]\label{thm:ConvBM}
There exists $d_0>5$ such that for $d>d_0$ the following convergence holds. 
For any bounded continuous function $f\colon \mathbb D([0,1],\R^d)\to\R$, we have that 
\[ \lim_{n\to\infty}\big\langle f(X_n)\big\rangle_n =
\mathbb E\Big[f((W_s)_{s \in [0, 1]})\Big]\, \]
where $(W_s)_{s \geq 0}$ denotes the standard Brownian motion and $\mathbb{E}$ is its expectation.
\end{theorem}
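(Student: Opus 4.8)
The plan is to prove Theorem~\ref{thm:ConvBM} via the lace expansion, following the now-standard strategy for self-avoiding walk (as in \cite{SladeBook, HeydeHofst17}) but adapted to handle the infinite-range prudent constraint. Write $c_n(x)$ for the number of $n$-step prudent walks from $0$ to $x$, and $c_n=\sum_x c_n(x)$ for the total number. The two-point function is the generating function
\begin{equation}\label{eq:plan:twopoint}
G_z(x)=\sum_{n\ge 0} c_n(x)\, z^n,
\end{equation}
with Fourier transform $\hat G_z(k)=\sum_x G_z(x)\,\mathrm e^{\mathrm i k\cdot x}$. The first and most substantial step is to derive a lace expansion identity of the form
\begin{equation}\label{eq:plan:LE}
\hat G_z(k)=\frac{1}{1-z\,2d\,\hat D(k)-\hat\Pi_z(k)},
\end{equation}
where $\hat D$ is the Fourier transform of the simple-random-walk step distribution and $\hat\Pi_z$ is the lace-expansion coefficient, which we must construct by expanding along the infinite-range interaction. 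The key difficulty here, absent in ordinary SAW, is that the prudent condition couples a step at time $t$ to the \emph{entire} past range; so the combinatorial expansion producing $\Pi$ must be set up to resolve this non-Markovian, infinite-range dependence, and the resulting diagrams will be genuinely different from the SAW lace diagrams. I expect this to be the main obstacle: defining the right notion of ``lace'' and ``connection'' for prudent walks and showing the algebraic identity \eqref{eq:plan:LE} holds exactly.

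The second step is the diagrammatic bound: one must show that $\hat\Pi_z(k)$ and enough of its derivatives/differences are small, uniformly in $k$, when $d$ is large and $z$ is near the critical value $z_c=1/\mu$ (where $\mu=\lim c_n^{1/n}$ is the connective constant). This is done by bootstrapping: assume a priori bounds on $\hat G_z$ (of the form $\hat G_z(k)\le \text{const}/[1-\hat D(k)]$ and related quantities, the ``bootstrap functions'' $f_1,f_2,f_3$ in the usual setup), use them to bound the prudent-walk diagrams by convolutions of simple-random-walk Green's functions — which are finite and $O(1/d)$-small precisely when $d>5$ rather than $d>4$, since the prudent diagrams contain one more ``line'' than the SAW ones due to the long-range constraint — and then close the bootstrap to upgrade the a priori bounds into genuine ones on the critical interval. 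This forced-in extra propagator is exactly the mechanism that raises the upper critical dimension from four to five; making the power-counting precise here is delicate but follows the template of \cite{HeydeHofst17}. One also needs an infrared bound and the identification $z\cdot 2d\to 1$ as $z\to z_c$, i.e.\ the analogue of the statement that the expansion converges up to the critical point.

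The third step converts the control on $\hat G_z$ into control on the finite-$n$ quantities. From \eqref{eq:plan:LE} together with the smoothness of $\hat\Pi_z$ one extracts, by a Tauberian/contour argument (or the ``induction on $n$'' method of \cite{HofstSlade02} if one prefers not to pass through the generating function), the asymptotics $c_n\sim A\,\mu^n$ and the statement that the rescaled endpoint $w(n)/\sqrt{Kn}$ converges to a Gaussian, where $K=-\hat\Pi''$-corrected diffusion constant; this pins down the constant $K$ of \eqref{eq:DefK}. More generally one gets convergence of all finite-dimensional distributions of $X_n$ to those of Brownian motion, by the same Fourier analysis applied to the joint generating function of the walk evaluated at times $t_1<\dots<t_m$ — here one uses that, conditionally on a ``renewal''-type decomposition of prudent walks into irreducible pieces, the increments become asymptotically independent.

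The final step is tightness in $\mathbb D([0,1],\R^d)$ with the Skorokhod $J_1$-topology, which together with convergence of finite-dimensional distributions yields weak convergence to the Wiener measure and hence the claimed convergence of $\langle f(X_n)\rangle_n$ for bounded continuous $f$. Tightness follows from a moment bound of the form $\langle |w(\lfloor nt\rfloor)-w(\lfloor ns\rfloor)|^4\rangle_n\le C\,(n|t-s|)^2$ for $|t-s|\ge 1/n$, which in turn is a consequence of the two-point function estimates (a fourth-moment bound via the lace expansion, exactly as for SAW), plus the standard Billingsley criterion \cite{Billi68}. Among these, the tightness and fdd steps are essentially routine once the diagrammatic estimates of steps one and two are in hand; the genuinely new work — and the reason the critical dimension shifts — is entirely in constructing and estimating the prudent-walk lace expansion.
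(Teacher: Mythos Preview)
Your proposal is essentially correct and follows the same overall architecture as the paper: lace expansion, diagrammatic bounds via bootstrap, extraction of finite-$n$ asymptotics, then finite-dimensional distributions plus tightness. One point of emphasis is off, however: the algebraic lace expansion identity \eqref{eq:plan:LE} is \emph{not} where the difficulty lies. The expansion is literally the Brydges--Spencer lace expansion for self-avoiding walk (Proposition~\ref{theo:recursionforG}); it depends only on the graph structure of the interaction $U_{st}$, not on its interpretation, so no new notion of ``lace'' is needed. The genuine novelty is entirely in the diagrammatic bounds (Proposition~\ref{thm:equi41} and Section~\ref{sec:bootstrap}): the infinite-range constraint produces, in each diagram, an indicator $\1$ along a coordinate axis rather than a Kronecker delta, so the relevant bubble is $G*G*\1$ rather than $G*G$, and the bootstrap must be closed via Fourier analysis with a truncated indicator $\IR$ because $\1$ itself is not summable. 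This is exactly the ``extra line'' you identify, but the work is in bounding these heterogeneous diagrams, not in setting up the expansion.

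Two minor discrepancies: the paper proves tightness via $r=3/2$ moments (Proposition~\ref{prop-moments} and Lemma~\ref{lem:tightness}) rather than fourth moments, and the finite-dimensional-distribution argument (Lemma~\ref{lem:fdd}) uses the lace-expansion decomposition $K_{[0,I_1]}J_{[I_1,I_2]}K_{[I_2,n]}$ of \cite[Lemma~5.2.5]{MadraSlade93}, not a renewal decomposition into irreducible pieces. Neither affects the validity of your outline.
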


The result remains true for the weakly prudent walk introduced in Section \ref{sec:WeaklyPrudentWalk} in dimension $d>5$ provided that a ``strength parameter'' $\lambda$ for the weakly prudent walk, defined in \eqref{eq:defcnlambda}, is sufficiently small. 

We furthermore prove that the \emph{prudent bubble condition} is satisfied if the dimension is large (or $d>5$ and $\lambda$ is small), see Section \ref{sec:Results}. 

\subsection{Organisation of the paper}
We define prudent walk and weakly prudent walk in Section \ref{sec:ModelResults}. Our results are stated in Section \ref{sec:Results}, followed by a discussion in Section \ref{sec:discussion}. 
In Section \ref{sec:laceexpbound} we derive the lace expansion for prudent walks and obtain diagrammatic estimates on the expansion coefficients. 
Section \ref{sec:bootstrap} establishes the convergence of the expansion and establishes the prudent bubble condition. The analysis presented in these two sections is the major novelty of the present work. Finally,  in Sections \ref{sec:CritG} and \ref{sec:convergence}, we prove the convergence of the prudent walk to a Brownian motion.

\section{Definitions and results}\label{sec:ModelResults}

Given a square-sumable function $f$ on $\mathbb{Z}^d$, we define its Fourier transform as
\begin{equation}
\hat f(k) = \sum\limits_{x \in \mathbb{Z}^d} f(x) e^{i x \cdot k },\qquad k \in [-\pi, \pi]^d
\end{equation}
and observe that 
\begin{equation}
f(x) = \int_{[-\pi, \pi]^d } \frac{\dd^dk}{(2\pi)^d} \hat{f}(k) e^{-i k \cdot x  }.
\end{equation}
We now recall some basic definitions for the simple random walk and introduce the main definitions for prudent walk and weakly prudent walk.

\subsection{Preliminaries on simple random walk}
For $x,y\in\Z^d$ and $n\in\N_0 := \{0, 1, 2, \ldots\}$, we write 
\begin{equation}
\begin{split}\label{def:Wn}
	\Wa_n(x,y):=\Big\{ w\colon\{0,\dots,n\}\to\Z^d\colon & w(0)=x, w(n)=y, \\ &\text{ and } |w(s)-w(s-1)|=1\text{ for all }s=1,\dots,n\Big\}
	\end{split}
\end{equation}
for the set of \emph{$n$-step walks from $x$ to $y$}.  We further write $\Wa(x,y)=\bigcup_{n\in\N_0}\Wa_n(x,y)$, and for $w\in\Wa(x,y)$ we write $|w|$ for the length of the walk, that is, the unique $n\in\N_0$ such that $w\in\Wa_n(x,y)$. 
We denote by $C_z$ the simple random walk Green's function, that is,
\begin{equation}
C_z(x,y)  : = \sum\limits_{n=0}^{\infty}  \big | \mathcal{W}_n(x,y)  \big | z^n,
\end{equation}
and we let $C_z(x)=C_z(0,x)$.
We recall that the radius of convergence of $z\mapsto\sum_{x\in \mathbb Z^d} C_z(x)$ equals $\frac{1}{2d}$.
It is convenient to introduce the function 
\begin{equation}
D(x) : =
\begin{cases}
\frac{1}{2d} & \quad \mbox{ if $|x| = 1$,} \\
0  & \quad \mbox{ otherwise},
\end{cases}
\qquad x\in\Z^d,
\end{equation}
with Fourier transform given by 
$\hat{D}(k) = \frac{1}{d} \sum_{i=1}^{d}  \cos(k_i)$. 
Since $|\Wa_n(0,x)|=(2d)^n D^{\ast n}(x)$ (where $D^{\ast n}=D\ast\cdots\ast D$ denotes $n$-fold convolution), we get that the Fourier transform of $C_z(x)$ for  $|z|<z_c$ is given by 
\begin{equation}\label{eq:connectionCD}
\hat{C}_{z}(k) = \frac{1}{1 -  2 d z \hat{D}(k)},\qquad k\in [-\pi, \pi]^d. 
\end{equation}
For later reference, we note the elementary relations 
\begin{equation}\label{eq:Dk}
	1-\hat D(k)=\Big( \frac{1}{2d} + o(1)\Big)\,|k|^2
	\quad \text{as $|k|\to 0$}\qquad {and} \qquad
	\lim_{n\to\infty}n \big (1-\hat D(k/\sqrt n)  \big ) =  \frac{1}{2d} \,|k|^2,
\end{equation}
where $| \, \cdot \, |$ denotes the $L^2$ norm.

\subsection{The prudent walk}
We now properly define the prudent walk. 
\begin{definition}
A walk $w\in\Wa_n(x,y)$ satisfies the \emph{prudent condition} if 
\begin{equation}\label{eq:prudent}
	w(s)\not\in w(t)+\N_0 \big(w(t)-w(t-1)\big)
	\qquad\text{for all $0\le s<t\le n$.} 
\end{equation}
\end{definition}
We shortly say ``$w$ is prudent''. 
The set of \emph{prudent} $n$-step walks from $x$ to $y$ is denoted by $\Pru_n(x,y)$ and 
$c_n(x,y):=|\Pru_n(x,y)|$ denotes its cardinality. We denote $c_n(x):=c_n(0,x)$.
The total number of $n$-step prudent walks is $c_n:=\sum_{x\in\Z^d}c_n(x)$. From sub-additivity it follows that  the limit
\begin{equation}
\mu:=\lim_{n\to +\infty}(c_n)^{1/n}
\end{equation}
exists. The constant $\mu$ is called the \textit{connective constant} and satisfies the trivial bounds
\begin{equation}\label{eq:inequalityconnective}
d - 1 \leq \mu \leq  2d-1.
\end{equation}

Prudent random walk is the uniform measure on the set of $n$-step prudent walks. Note that the prudent random walk is not a stochastic process, because the resulting family of measures is not consistent. 

The \emph{prudent two-point function} is defined as the generating function
\begin{equation}\label{eq:G}
	G_z(x,y)=\sum_{n=0}^\infty c_n(x,y)\,z^n
\end{equation}
and we let $G_z(x)=G_z(0,x)$.
We let  $z_c : =\frac{1}{\mu}$ be the radius of convergence of the power series
\begin{equation}\label{eq:susceptibility}
\chi(z) := \sum_{n=0}^\infty c_n\,z^n = \sum\limits_{x \in \mathbb{Z}^d} G_z(x).
\end{equation}
We refer to $z_c$ as \textit{critical point} and $\chi(z)$ as \textit{susceptibility}.

\subsection{The weakly-prudent walk} \label{sec:WeaklyPrudentWalk}
We now introduce the weakly-prudent walk, in which each step of the walk which does not fulfill the prudent condition is penalized by a multiplicative parameter $\lambda \in [0,1]$. Hence the case $\lambda = 0$ corresponds to simple random walk, while the case $\lambda = 1$ corresponds to  prudent walk.
We also  rephrase the expression for the prudent two-point function in \eqref{eq:G} so that it is more suited for an expansion. 
Given a $n$-step walk $w=(w(0),\dots,w(n))$, and $0\le s<t\le n$, we define
\begin{equation}\label{eq:Ust}
	U_{st}(w) : =\begin{cases}
	-1,& \quad \text{if}\quad w :  w(t)\downarrow w(s),\\
	0,&\quad \text{otherwise},
	\end{cases}
\end{equation}
where $w(t)\downarrow a$ is shorthand for $a-w(t)\in \N_0 \big(w(t)-w(t-1)\big)$---recall the prudent condition \eqref{eq:prudent}.
If $w$ is such that  $w(t)\downarrow a$, then we say that 
the $t$-th step of $w$ \emph{sees} $a$ or, shortly, $w(t)$ \emph{sees} $a$.
The two-point function for the  weakly-prudent walk is defined as 
\begin{equation}\label{eq:GU}
	G^\lambda_z(x,y)=\sum_{n \geq 0 }\ z^n c^\lambda_n(x,y) 
\end{equation}
where 
\begin{equation}\label{eq:defcnlambda}
c^\lambda_n(x,y) = \sum_{w\in\Wa_n(x,y)}
\phi^\lambda(w) \quad \quad \text{and}\quad 
\phi^\lambda(w) := \prod_{0\le s<t\le n}  \big (1+ \, \lambda \, U_{st}(w) \big ).
\end{equation} 
We also let $c^\lambda_n(x)=c^\lambda_n(0,x)$ and $G^\lambda_z(x)=G^\lambda_z(0,x)$.

We let $c^\lambda_n=\sum_{x\in \mathbb Z^d}c_n^\lambda(x)$. Let us observe that $\phi^1(w)\neq 0$ if and only if $w$ satisfies the prudent condition. To simplify the notation, we denote $\phi(w)=\phi^1(w)$.

In analogy with \eqref{eq:susceptibility}, for any $\lambda>0$, we define $z_c(\lambda)$ as the radius of convergence of the susceptibility
\begin{equation}
\chi^{\lambda}(z) : = \sum\limits_{x \in \mathbb{Z}^d} G_z^{\lambda}(x) = \sum_{n \geq 0} \ z^n\sum_{w\in\Wa_n}\phi^\lambda(w).
\end{equation}
Hence it follows that $z_c(0) = \frac{1}{2d}$, $z_c(1) = \frac{1}{\mu}$, $G^1_z(x) = G_z(x),$
and
$\chi^1(z) = \chi(z)$.

\subsection{Results} \label{sec:Results}

A central object in our analysis is the \textit{prudent bubble diagram}, which we now introduce. 
To this end, we introduce a notion that is weaker than the symbol $w(t)\downarrow a$ used in \eqref{eq:Ust}:
for $x,a\in\Z^d$, $a\ne x$, we let 
\begin{equation}\label{def:abotx}
x\bot a  \quad \Longleftrightarrow \quad x-a\in\bigcup_{j=1,\dots,d}e_j\,\Z \, ,
\end{equation}
where $e_1,\dots,e_d$ are the coordinate axes (i.e., $x\bot a$ whenever $x$ and $a$ differ in at most one coordinate). 
For any $x,a\in\Z^d$ we define the \textit{modified indicator function}
\begin{equation}\label{ourindicator}
	\I{a}(x) : = 
	\begin{cases}
	    \frac1d & \mbox{ if $x \bot  a$ and $x \neq a$} \\
		0 & \text{ otherwise}, 
	\end{cases}
\end{equation}
and abbreviate $\1(x)=\I{0}(x)$. Observe that $\I{a}(x)=\1(x-a)$.
\begin{definition}[Prudent Bubble Diagram]
\label{def:PrudentBubble}
We define the prudent bubble diagram as
\begin{equation}\label{def:bubbleB}
\sB^\lambda_z   :   = 
 \|   G^\lambda_z \ast  G_z^\lambda  \ast \1  \|_{\infty}  = 
   \sup_{y\in \mathbb{Z}^d}
\sum\limits_{ \substack{ x_1, x_2 \in \mathbb{Z}^d }}  
G^\lambda_z(0,x_1) 
G_z^\lambda(x_1, x_2) 
\I{ y } (x_2)  
\end{equation}
If $\lambda=1$, we write $\sB_z =\sB^1_z $.
\end{definition}
The next theorem states that the critical prudent bubble diagram is finite if the dimension is sufficiently large and $\lambda=1$ or if the dimension is at least $6$ and $\lambda$ is sufficiently small. 
\begin{theorem}[Prudent Bubble Condition]\label{thm:BubbleCond}
There exist $d_0\ge5$ and $\lambda_0>0$ such that if either 
\begin{enumerate}
\item[{\rm (a)}] $d>5$ and $\lambda\in (0,\lambda_0)$, or
\item[{\rm (b)}] $\lambda=1$ and $d>d_0$,  
\end{enumerate} 
then the critical prudent bubble diagram $\sB^\lambda_{z_c}$ is finite. Moreover, there exists a constant $C>0$ (independent of $\lambda$ and the dimension $d$) such that 
	\begin{equation}
	\sB^\lambda_{z_c}<C/d.
	\end{equation}
\end{theorem}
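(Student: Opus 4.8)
The plan is to control the critical bubble $\sB^\lambda_{z_c} = \|G^\lambda_{z_c} \ast G^\lambda_{z_c} \ast \1\|_\infty$ by comparing the prudent two-point function $G^\lambda_z$ with the simple random walk Green's function $C_z$ in a suitable norm, and then carrying out the convolution estimate in Fourier space.

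First I would establish the basic a priori bound $G^\lambda_z(x) \le C_z(x)$ for all $x\in\Z^d$ and $z\in[0,z_c)$, which is immediate from $0\le\phi^\lambda(w)\le 1$ (each factor $1+\lambda U_{st}(w)\in\{1-\lambda,1\}\subseteq[0,1]$). This already gives $z_c(\lambda)\ge \frac{1}{2d}$ and, more importantly, a crude pointwise domination. However, the naive bound $\sB^\lambda_{z_c}\le \|C_{z_c}\ast C_{z_c}\ast\1\|_\infty$ is useless because $C_z$ diverges at $z=1/(2d)$ while $z_c(\lambda)$ may be strictly larger. The key structural input must therefore come from the lace expansion derived in Section~\ref{sec:laceexpbound} and the bootstrap of Section~\ref{sec:bootstrap}: these yield that the prudent two-point function behaves like a simple random walk Green's function at its \emph{own} critical point, i.e.\ there is a constant $K>0$ and an infrared bound of the form $\hat G^\lambda_z(k) \le \frac{\mathrm{const}}{K\,n\,(1-\hat D(k/\sqrt n))}$-type control, or more directly a bound $\hat G^\lambda_{z_c}(k)\le \frac{\mathrm{const}}{1-\hat D(k)}$ uniformly in $d$ large (resp.\ $d>5$, $\lambda$ small). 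I would take this infrared bound as the main analytic ingredient.

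Next, with the infrared bound in hand, I would estimate the bubble in Fourier space. Writing the convolution via Parseval,
\[
\sB^\lambda_{z_c} = \sup_{y}\, \big(G^\lambda_{z_c}\ast G^\lambda_{z_c}\ast\1\big)(y)
= \sup_y \int_{\Td} \frac{\d^d k}{(2\pi)^d}\, \hat G^\lambda_{z_c}(k)^2\,\hat{\1}(k)\, e^{-ik\cdot y},
\]
and bounding $|e^{-ik\cdot y}|\le 1$, it suffices to control $\int_{\Td} \hat G^\lambda_{z_c}(k)^2\,|\hat\1(k)|\,\dk$. The crucial point is that the modified indicator $\1$ is \emph{not} a probability measure on all of $\Z^d$ but is supported on the union of the coordinate axes; consequently $\hat\1(k) = \frac1d\sum_{j=1}^d \frac{2\cos k_j}{\text{(something like }2-2\cos k_j)}$ — more precisely $\hat\1(k)$ equals an average over $j$ of a one-dimensional lattice Green's function in the variable $k_j$, which is $O(1)$ for $|k_j|$ bounded away from $0$ but has an integrable singularity $\sim |k_j|^{-1}$ as $k_j\to 0$. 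I would compute $\hat\1(k)$ explicitly (it is the Fourier transform of the function that is $\frac1d$ on each axis-line minus the origin, hence related to $\sum_j (\text{geometric series in } e^{ik_j})$), obtaining $\hat\1(k) = \frac1d\sum_{j=1}^d\big(\frac{1}{1-e^{ik_j}}+\frac{1}{1-e^{-ik_j}}-1\big) = \frac1d\sum_{j=1}^d\big(\frac{1-\cos k_j}{... }\big)$, and crucially verifying that $\|\1\|_1$-type quantities and the relevant integrals are finite when $d$ is large. Then I would split the integral into the region near $k=0$, where I use $\hat G^\lambda_{z_c}(k)^2\le \mathrm{const}/(1-\hat D(k))^2 \asymp \mathrm{const}/|k|^4$ together with the axis-singularity of $\hat\1$, and the bulk region where everything is bounded. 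In the small-$k$ region the integral $\int_{|k|<\delta} |k|^{-4}\cdot(\text{axis singularities})\,\d^d k$ converges precisely when $d>5$ — this is where the shifted critical dimension enters — and one extracts the factor $1/d$ from the prefactor $\frac1d$ in $\hat\1$ together with the $d$-dependence of $\hat G^\lambda_{z_c}(k)\le C_z$-comparison constants that are $1+O(1/d)$.

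The main obstacle, I expect, is twofold. First, justifying the uniform (in $d$, and in $\lambda$ near $0$ or equal to $1$ for $d$ large) infrared bound on $\hat G^\lambda_{z_c}$ — this is exactly the content of the lace-expansion convergence and bootstrap argument, and the bubble condition proof is genuinely circular-looking unless one sets up the bootstrap carefully so that a weak a priori bound on the bubble feeds the lace expansion, which in turn upgrades the bound on $G^\lambda_z$, which closes the estimate on $\sB^\lambda_z$ uniformly up to $z=z_c$; I would run this as a standard bootstrap/continuity argument in $z$, using that $\sB^\lambda_z$ is continuous and increasing in $z$ on $[0,z_c)$ and finite for small $z$. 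Second, and more specific to this model, is the bookkeeping of the axis-supported indicator $\1$ in the Fourier integral: unlike the classical self-avoiding-walk bubble $\|G_{z_c}\ast G_{z_c}\|_\infty$ which involves $\hat G^2$ with no extra factor, here the factor $\hat\1(k)$ has one-dimensional singularities along each coordinate hyperplane $k_j=0$, and one must check these do not accumulate to destroy convergence — this is what raises the critical dimension from $4$ to $5$ and must be done by carefully decomposing the integration domain according to which coordinates $k_j$ are small. Once these are handled, extracting the explicit $C/d$ bound is routine: the leading contribution is $\hat D$-dominated and scales like $1/d$ after the change of variables and use of \eqref{eq:Dk}.
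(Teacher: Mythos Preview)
Your overall architecture is right: the proof is a bootstrap in $z$, the lace expansion feeds bounds on $\hat\Pi_z$ in terms of the bubble, and closing the loop requires an estimate of $\sB^\lambda_z$ under the bootstrap hypothesis $\hat G^\lambda_z(k)\le K\,\hat C_{p(z)}(k)$. You also correctly identify that the extra dimension comes from the axis structure of $\1$. Where the proposal fails is the Fourier treatment of $\1$.

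The modified indicator $\1$ is \emph{not} summable: $\sum_x\1(x)=\infty$, since it equals $1/d$ on infinitely many points along each axis. Consequently $\hat\1(k)$ does not exist as a function; the series $\sum_{n\neq0}e^{ink_j}$ diverges for every real $k_j$, so your formula $\hat\1(k)=\frac1d\sum_j\big(\frac{1}{1-e^{ik_j}}+\frac{1}{1-e^{-ik_j}}-1\big)$ is not valid, and the claimed ``integrable singularity $\sim|k_j|^{-1}$'' is really a periodic Dirac mass on the hyperplane $\{k_i=0,\ i\neq j\}$. Your Parseval step $\sB^\lambda_{z_c}\le\int\hat G^2|\hat\1|\,\dd k$ therefore has no meaning as written, and the subsequent small-$k$ power-counting cannot be carried out.

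The paper handles exactly this obstruction by introducing the Gaussian cutoff $\IR(x)=\frac1d e^{-x^2/R}\mathbf 1_{\{x\bot 0,\,x\neq0\}}$, whose Fourier transform $\hIR^*(k)=\frac{\sqrt{\pi R}}{d}\sum_j e^{-\pi^2 R k_j^2}$ is genuinely positive and finite for each $R$. Two further ingredients are then essential and absent from your sketch: first, one peels off a factor of $D$ via $G_z-\delta_0\le 2dz\,D*G_z$, so that the Fourier integrand carries $\hat D(k)^2$ and is integrable; second, and crucially, by symmetry one replaces $\hIR^*$ by $\sqrt{\pi R}\,e^{-Rk_d^2}$, and integrating out $k_d$ collapses the $d$-dimensional integral to a $(d-1)$-dimensional simple-random-walk bubble, \emph{uniformly in $R$}. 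Only then can one let $R\to\infty$ and recover $\1$ by monotone convergence. The $1/d$ factor comes out of this reduction (and the bound $2dz\le 2d/(d-1)$), not merely from the prefactor in $\1$. Without this regularize-and-reduce-dimension mechanism, your integral does not converge and the argument does not close.
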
 
The proof of Theorem \ref{thm:BubbleCond} is intertwined with the asymptotic behaviour of the critical two-point function, which we formulate in the next theorem. 

\begin{theorem}[Critical Two-Point Function]\label{thm:CritG}
Under the conditions of Theorem \ref{thm:BubbleCond}, there exists $K \in (0, \infty)$, 
defined in (\ref{eq:DefK}) below, such that for $k\in\R^d$,
\begin{equation}\label{eq:fcnConv}
	\hat c^\lambda_n\big(k/\sqrt{n}\big)\sim\hat c^\lambda_n\big(0\big)\exp\{-K\,|k|^2\} \qquad\text{ as $n\to\infty$.}
\end{equation}
\end{theorem}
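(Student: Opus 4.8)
\emph{Proof proposal.} The assertion is equivalent to $\hat c^\lambda_n(k/\sqrt n)/\hat c^\lambda_n(0)\to e^{-K|k|^2}$, and the plan is to read this off from the dominant singularity in $z$ of the generating function $z\mapsto\hat G^\lambda_z(k)=\sum_{n\ge 0}\hat c^\lambda_n(k)\,z^n$. From the lace expansion derived in Section~\ref{sec:laceexpbound} one has, for $|z|\le z_c(\lambda)$, an identity
\[ \hat G^\lambda_z(k)=\frac{1+\hat\Pi^\lambda_z(k)}{\hat\Phi^\lambda_z(k)},\qquad \hat\Phi^\lambda_z(k):=1-2dz\,\hat D(k)-\hat\Pi^\lambda_z(k) \]
(up to the precise shape of the numerator, which is immaterial for what follows), with $\hat\Pi^\lambda_z(k)$ the expansion coefficient. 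Taking $k=0$ gives $\chi^\lambda(z)=(1+\hat\Pi^\lambda_z(0))/\hat\Phi^\lambda_z(0)$, so $\hat\Phi^\lambda_{z_c}(0)=0$ because $\chi^\lambda(z_c)=\infty$, cf.~\eqref{eq:susceptibility}. The scheme is then: (i) record the analyticity and smallness of $\hat\Pi^\lambda$ from Section~\ref{sec:bootstrap}; (ii) show $z\mapsto\hat G^\lambda_z(k)$ has, for $k$ near $0$, a unique simple pole at some $z_c(k)$ with $z_c(0)=z_c(\lambda)$; (iii) extract $\hat c^\lambda_n(k)\sim A(k)\,z_c(k)^{-n}$ uniformly for $k$ near $0$; (iv) substitute $k\mapsto k/\sqrt n$.

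For step~(i), the convergence of the lace expansion together with the prudent bubble condition (Theorem~\ref{thm:BubbleCond}) give that $\hat\Pi^\lambda_z(k)$ converges absolutely for $|z|\le z_c(\lambda)$, is jointly continuous, is $C^2$ in $k$ with $\nabla_k\hat\Pi^\lambda_z(0)=0$ (reflection symmetry of prudent walk) and $\|\nabla^2_k\hat\Pi^\lambda_z(0)\|\le \mathrm{const}\cdot\sB^\lambda_{z_c}$, which is $O(1/d)$ in case~(b) and $O(\lambda)$ in case~(a), and that $z\mapsto\hat\Pi^\lambda_z(k)$ continues analytically to a fixed disc $|z|<z_c(\lambda)+\epsilon_0$ (its coefficients decaying geometrically). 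Since $2dz\hat D(k)=2z\sum_i\cos k_i$ and $1-\hat D(k)=(\tfrac1{2d}+o(1))|k|^2$ by~\eqref{eq:Dk}, it follows that $\partial_z\hat\Phi^\lambda_{z_c}(0)\ne 0$ (the $-2d$ from the $\hat D$-term dominates the small term $\partial_z\hat\Pi^\lambda$) and that $\nabla^2_k\hat\Phi^\lambda_{z_c}(0)=2z_c\,\mathrm{Id}-\nabla^2_k\hat\Pi^\lambda_{z_c}(0)$ is a positive multiple of the identity. The implicit function theorem then provides a $C^2$ map $k\mapsto z_c(k)$ with $\hat\Phi^\lambda_{z_c(k)}(k)=0$, $z_c(0)=z_c(\lambda)$, $\nabla_k z_c(0)=0$, and
\[ z_c(k)=z_c(\lambda)\big(1+K|k|^2+o(|k|^2)\big),\qquad K:=\frac{\tfrac12\,\big[\nabla^2_k\hat\Phi^\lambda_{z_c}(0)\big]_{11}}{-\,z_c(\lambda)\,\partial_z\hat\Phi^\lambda_{z_c}(0)}>0, \]
which identifies $K$ with the constant of~\eqref{eq:DefK}.

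For steps~(ii)--(iv), fix a neighbourhood $U$ of $0$ on which, in addition to the above, $z\mapsto\hat\Phi^\lambda_z(k)$ has no zero in $\{|z|\le z_c(\lambda)+\epsilon_1\}$ other than the simple one at $z_c(k)$ --- for $k=0$ because $c^\lambda_n\ge 0$ forces $z_c(\lambda)$ to be the only singularity of $\hat G^\lambda_\cdot(0)$ on its circle of convergence, given the analytic continuation of $\hat\Pi^\lambda$, and for small $k\ne 0$ by Rouché, after shrinking $\epsilon_1,U$. Then for $k\in U$, Cauchy's formula and the residue theorem give
\[ \hat c^\lambda_n(k)=-\operatorname*{Res}_{z=z_c(k)}\frac{\hat G^\lambda_z(k)}{z^{n+1}}+\frac1{2\pi i}\oint_{|z|=z_c(\lambda)+\epsilon_1}\frac{\hat G^\lambda_z(k)}{z^{n+1}}\,{\rm d}z=A(k)\,z_c(k)^{-n}+O\big((z_c(\lambda)+\epsilon_1)^{-n}\big), \]
uniformly in $k\in U$, where $A(k)=(1+\hat\Pi^\lambda_{z_c(k)}(k))\big/\big(-z_c(k)\,\partial_z\hat\Phi^\lambda_{z_c(k)}(k)\big)$ is continuous and strictly positive on $U$, and the error is exponentially smaller than the main term since $z_c(k)<z_c(\lambda)+\epsilon_1$. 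Hence for $n$ large enough that $k/\sqrt n\in U$,
\[ \frac{\hat c^\lambda_n(k/\sqrt n)}{\hat c^\lambda_n(0)}=\frac{A(k/\sqrt n)}{A(0)}\left(\frac{z_c(\lambda)}{z_c(k/\sqrt n)}\right)^{\!n}(1+o(1))\longrightarrow e^{-K|k|^2}, \]
using $A(k/\sqrt n)\to A(0)$ by continuity and $n\log\big(z_c(k/\sqrt n)/z_c(\lambda)\big)=n\log(1+\tfrac{K}{n}|k|^2+o(\tfrac1n))\to K|k|^2$ from the Taylor expansion above together with~\eqref{eq:Dk}; the case $k=0$ ($z_c(0)=z_c(\lambda)$) in particular gives $\hat c^\lambda_n(0)\sim A(0)z_c(\lambda)^{-n}$.

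\emph{Main obstacle.} The delicate point is step~(ii): proving that the prudent lace-expansion coefficient $\hat\Pi^\lambda_z(k)$ admits an analytic continuation in $z$ strictly past $z_c(\lambda)$ with a spectral gap to the next singularity that is uniform in $k$, and ruling out extra zeros of $\hat\Phi^\lambda_z(k)$ on the critical circle --- subtler here than for self-avoiding walk because the interaction has infinite range, so the diagrammatic estimates of Section~\ref{sec:laceexpbound} and their $z$-analyticity are heavier. The non-degeneracy assertions $\partial_z\hat\Phi^\lambda_{z_c}(0)\ne 0$ and positive-definiteness of $\nabla^2_k\hat\Phi^\lambda_{z_c}(0)$ are exactly where the smallness of $\hat\Pi^\lambda$ furnished by Theorem~\ref{thm:BubbleCond}, i.e.\ the hypotheses ``$d$ large'' or ``$\lambda$ small'', is used. (Alternatively one could run the van der Hofstad--Slade inductive scheme, proving the asymptotics of $\hat c^\lambda_n(k)$ directly by induction on $n$; the obstacle then becomes the formulation and propagation of the induction hypotheses, but the bubble condition remains the essential input.)
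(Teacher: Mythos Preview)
Your overall strategy---locate the simple zero $z_c(k)$ of $\hat\Phi^\lambda_z(k)$, read off $\hat c^\lambda_n(k)\sim A(k)z_c(k)^{-n}$ by contour integration, and substitute $k/\sqrt n$---is the classical singularity-analysis route, and it is \emph{not} the one the paper takes. The paper instead rewrites the lace-expansion identity algebraically as
\[
z_c\,\hat G_z(k)=\frac{1}{[1-z/z_c]\,A(k)+B(k)}-\Theta_z(k),
\]
expands the first term as a geometric series, and controls the coefficients $\theta_n(k)$ of the error term $\Theta_z$ via \emph{fractional-derivative} bounds on $\pi_n$ (Lemma~\ref{lem:moments}), obtaining $|\theta_n(k)|\le O(z_c^{-n}n^{-\varepsilon})$ for $\varepsilon<(d-5)/2$. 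This works entirely on $|z|\le z_c$ and uses no analytic continuation of $\hat\Pi^\lambda_z$ beyond the critical circle.

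The gap in your argument is exactly the step you flag as the main obstacle, and it is more serious than you indicate: the parenthetical claim that $\hat\Pi^\lambda_z(k)$ ``continues analytically to a fixed disc $|z|<z_c(\lambda)+\epsilon_0$ (its coefficients decaying geometrically)'' is not supported by the diagrammatic estimates in Section~\ref{sec:laceexpbound}. Those estimates, once pushed through the bootstrap, yield only the polynomial-type bounds of Lemma~\ref{lem:moments}, namely $\sum_n n^{1+\varepsilon}\sum_x|\pi_n(x)|\,z_c^n<\infty$ for $\varepsilon<(d-5)/2$; this says $\sum_x|\pi_n(x)|\,z_c^n=o(n^{-1-\varepsilon})$, which is \emph{not} geometric decay and does not give an analytic extension of $z\mapsto\hat\Pi^\lambda_z(k)$ past $z_c$. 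Without that extension your Rouch\'e/residue step (ii)--(iii) has no annulus to work in, and the uniform error $O((z_c+\epsilon_1)^{-n})$ is unavailable. The paper's decomposition is designed precisely to avoid this requirement: the ``extra'' singularity of $\hat\Pi$ at $z_c$ is absorbed into $E_z(k)$ and then into $\Theta_z(k)$, whose coefficients are shown to be subleading using only the fractional moment $n^{1+\varepsilon}$. Your alternative suggestion of an inductive scheme would face the same issue, since that scheme also needs geometric decay of $\pi_n$ in $n$.
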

Theorem \ref{thm:CritG} is already a strong indication towards the Brownian motion limit as it establishes a Gaussian limit for the endpoint. We reformulate this convergence in the generalized context of Section \ref{sec:WeaklyPrudentWalk}. 

\begin{theorem}[Convergence to Brownian motion; general version]\label{thm:ConvBMgen}
Consider (weakly or strictly) prudent walk in dimension $d>5$. 
There exists $c>0$ such that if $\lambda/d<c$, then
\[ \lim_{n\to\infty}\big\langle f(X_n)\big\rangle_n =
\mathbb E\Big[f((W_s)_{s \in [0,1]})\Big]\, \]
for any bounded continuous function $f\colon \mathbb D([0,1],\R^d)\to\R$, where 
$(W_s)_{s \geq 0}$ denotes the standard Brownian motion and $\mathbb{E}$ is its expectation.
\end{theorem}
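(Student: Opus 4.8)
The plan is to derive Theorem~\ref{thm:ConvBMgen} from the endpoint Gaussian convergence of Theorem~\ref{thm:CritG} by upgrading it to convergence of all finite-dimensional distributions and then establishing tightness in $\mathbb{D}([0,1],\R^d)$. The starting point is the lace expansion identity for $\hat G^\lambda_z(k)$ from Section~\ref{sec:laceexpbound}, together with the convergence of the expansion from Section~\ref{sec:bootstrap}, which should give (under the bubble condition and the hypothesis $\lambda/d<c$) an infrared bound $\hat G^\lambda_{z_c}(k)\asymp |k|^{-2}$ and, more importantly, a precise asymptotic for the coefficients $\hat c^\lambda_n(k)$ showing that after diffusive rescaling the generating function behaves like that of simple random walk with variance parameter $K$. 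I would first isolate the one-point asymptotic $c^\lambda_n \sim A\,\mu^n$ (with $\mu=1/z_c(\lambda)$) from a Tauberian/singularity-analysis argument applied to $\chi^\lambda(z)$, whose only singularity on the circle of convergence is a simple pole at $z_c(\lambda)$ — this is the standard consequence of the convergent lace expansion and the finiteness of the critical bubble.

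The core step is the multi-point version. Fix times $0\le t_1<\dots<t_m\le 1$ and Fourier variables $k_1,\dots,k_m\in\R^d$; I would analyze the joint characteristic function $\big\langle \exp\{i\sum_j k_j\cdot X_n(t_j)\}\big\rangle_n$ by decomposing the prudent walk of length $n$ into $m+1$ successive pieces of lengths $\approx n(t_{j+1}-t_j)$. The obstacle here, and the main difficulty of the whole theorem, is that the prudent constraint is \emph{infinite-range}: the pieces do not factorize, since later steps of the walk must avoid the range of \emph{all} earlier pieces, so one cannot simply write the $m$-point function as a convolution of $m$ two-point functions. The strategy to overcome this is again lace-expansion-based: one performs a Markov-type expansion that expresses $c^\lambda_n(x_0,x_1,\dots,x_m)$ as a sum over diagrams in which the long ``backbone'' lines (of macroscopic length) carry the diffusive behaviour and the ``interaction'' pieces between them are controlled — uniformly in $n$ — by the same diagrammatic sums that appear in the bubble condition, hence are summable and contribute only a bounded multiplicative constant that renormalizes $A$ but not $K$. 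Concretely, I expect that the non-factorized corrections are bounded by a geometric series in (a small constant times) the critical bubble $\sB^\lambda_{z_c}<C/d$, which is why the smallness hypothesis $\lambda/d<c$ is needed; in the limit these corrections vanish at the scale of the finite-dimensional characteristic function, leaving exactly the product $\prod_j$ of Gaussian increments with covariance $K(t_{j+1}-t_j)\,\mathrm{Id}$, i.e.\ the finite-dimensional distributions of Brownian motion.

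For tightness I would use the standard moment criterion in $\mathbb{D}([0,1],\R^d)$ with the $J_1$-topology (see \cite{Billi68}): it suffices to bound, uniformly in $n$, a quantity of the form $\big\langle |X_n(t)-X_n(s)|^4\big\rangle_n \le C\,(t-s)^2$ for $s<t$ (and to check the endpoint/initial conditions, which are trivial here since $X_n(0)=0$ and $X_n$ has left-continuous càglàd paths by construction). The fourth-moment bound follows from the infrared bound on $\hat G^\lambda_{z_c}$ and the diagrammatic estimates: writing the increment's fourth moment in terms of a sum over four subwalks joined at the points $w(\lfloor ns\rfloor)$ and $w(\lfloor nt\rfloor)$, one again replaces the prudent interaction between them by the convergent diagrammatic bounds, reducing the estimate to a simple-random-walk computation that gives the $(t-s)^2$ scaling with a constant depending only on $K$ and $\sB^\lambda_{z_c}$. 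Combining tightness with the identification of finite-dimensional limits yields weak convergence of $X_n$ to Wiener measure, hence $\langle f(X_n)\rangle_n\to\int f\,\mathrm{d}W$ for every bounded continuous $f$; Theorem~\ref{thm:ConvBM} is the special case $\lambda=1$ together with Theorem~\ref{thm:BubbleCond}(b) choosing $d_0$ large enough that $1/d_0<c$.
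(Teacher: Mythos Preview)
Your overall architecture (finite-dimensional distributions plus tightness, then invoke \cite[Theorem 15.1]{Billi68}) is correct and matches the paper.  However, both halves of the argument have real gaps.

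For the finite-dimensional distributions, the phrase ``Markov-type expansion that expresses $c^\lambda_n(x_0,\dots,x_m)$ as a sum over diagrams'' is too vague to be an argument.  The specific tool you are missing is the algebraic identity
\[
K_{[0,n]}(w)=\sum_{I\ni m}K_{[0,I_1]}(w)\,J_{[I_1,I_2]}(w)\,K_{[I_2,n]}(w)
\]
(see \cite[Lemma 5.2.5]{MadraSlade93}), which decomposes the prudent weight at a fixed intermediate time $m$ into two prudent weights on the sub-intervals joined by a lace contribution $J$ on a short bridging interval $I$.  Summing over walks turns $J_{[0,|I|]}$ into $\pi_{|I|}(x)$, and the induction step then rests on the \emph{moment bounds on $\pi_n$} (Lemma~\ref{lem:moments}), which show that intervals $|I|>\sqrt n$ and the $x$-dependence of the short ones are negligible.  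The corrections are \emph{not} controlled by a geometric series in the bubble $\sB^\lambda_{z_c}$; rather, convergence of the bubble is what makes Lemma~\ref{lem:moments} true, and it is the latter that drives the FDD induction.

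For tightness, your proposed fourth-moment bound $\big\langle |X_n(t)-X_n(s)|^4\big\rangle_n\le C(t-s)^2$ is not available with the tools of the paper: Proposition~\ref{prop-moments} gives $\big(c_n^{-1}\sum_x|x|^rc_n(x)\big)^{1/r}\le C_2\sqrt n$ only for $r\in(0,2)$, since its proof relies on $1-\cos t\le|t|^r$ (valid only for $r\le2$) and on the singularity analysis of $H_{z,r}$.  The paper instead uses the \emph{two-increment} criterion with exponent $r=3/2$,
\[
\big\langle |X_n(t_2)-X_n(t_1)|^{3/2}\,|X_n(t_3)-X_n(t_2)|^{3/2}\big\rangle_n\le C\,(t_3-t_1)^{3/2},
\]
obtained by dropping interactions between the four sub-walks (so $K_{[0,n]}\le\prod K_{[\cdot,\cdot]}$), combining Corollary~\ref{cor:thetancn} for $c_n$ with Proposition~\ref{prop-moments} for each increment.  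You should replace the fourth-moment step by this argument.
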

This theorem generalizes Theorem \ref{thm:ConvBM}. 
In order to get convergence in path space, as claimed in Theorem \ref{thm:ConvBMgen}, we need both convergence of finite-dimensional projections as well as tightness. We state and prove these results in Section \ref{sec:convergence}. 

Further results concern asymptotics of the critical Green's function of prudent walk in Corollary \ref{lem:GzNull} and asymptotics of the moment of the endpoint of prudent walk in Proposition \ref{prop-moments}.

\subsection{Discussion}\label{sec:discussion}

It is instructive to compare our results for prudent walk with analog results for (classical) self-avoiding walk. Indeed, it is known that rescaled self-avoiding walk in sufficiently high dimension converges to Brownian motion. This has been established by Slade \cite{Slade88,Slade89} using memory cutoff, by Hara and Slade \cite{HaraSlade92} using an alternative method involving fractional derivative estimates, by Heydenreich \cite{Heyde11} for long-range self-avoiding walk and, very recently, by Michta \cite{Micht22} for weakly self-avoiding walk on the high-dimensional torus. 

An interesting feature of our results is that the upper critical dimension, which appears to be $5$ for prudent walk, differs from the upper critical dimension of self-avoiding walk, which is $4$. 
We give a heuristic explanation for this change in the next subsection. 


Our proof of the bubble condition and the critical two-point function requires a novel lace expansion argument. Our proofs are based on the lace expansion, which was pioneered by Brydges and Spencer \cite{BrydgSpenc85} and further developed for self-avoiding walk by Hara and Slade. While the actual expansion is the same as for self-avoiding walks, the expression for the $\Pi$-diagrams as defined in \eqref{def:laceexpansion} involves both the two-point function $G_z^\lambda$ as well as the (modified) indicator function $\1$  (see Figure \ref{figN}, where indicators are represented by dotted lines), and interestingly the resulting diagrams are more reminiscent of percolation lace-expansion diagrams rather than the the diagrams appearing for classical self-avoiding walk. 
However, in contrast to earlier lace expansions, we obtain inhomogeneous lace expansion diagrams, and dealing with these inhomogeneities as well as the the infinite-range avoidance constraints is the major challenge in the proof. Our analysis therefore has substantial differences with respect to such classical cases.

This heterogeneity is reflected in the definition of the prudent bubble diagram, and dealing with it is the main obstacle in deriving the diagrammatic bounds and in the proof that the expansion converges. We demonstrate these in Sections \ref{sec:laceexpbound} and \ref{sec:bootstrap}, which we consider the most innovative part of the paper. Once the above bounds have been established, we can rely on a well-established machinery to get convergence to Brownian motion following \cite[Chapter 6.6]{MadraSlade93} or \cite{Heyde11}. Therefore, we concentrate in Section \ref{sec:convergence} and Appendix \ref{sec:moments} on the necessary adaptations rather than detailing the argument.


Recently, novel proofs have been found that allow a direct comparison between weakly self-avoiding walk's Green function and simple random walk's Green function, cf.\ \cite{BHK15,Slade22}. 
It might be asked whether similar techniques are applicable to weakly prudent walk. However, it appears that the $\Pi$-diagram for (weakly) prudent walk is converging as $|\Pi(x)|\approx|x|^{-(d-2)}$ along the coordinate axes, and thus decays too slow for the methods of \cite{BHK15,Slade22} to apply
 straightforwardly. 
Even though there is presumably faster decay off the axes, this is therefore indicating that methods involving pointwise estimates are not naturally adapted to the prudent walk and it should require additional estimates to fit the prudent walk setting. Our analysis of the lace expansion is based on Fourier transforms.

\subsection{The upper critical dimension for the prudent walk}
\label{sect:criticaldimension}
Our results show that the weakly prudent walk converges to Brownian motion under diffusive scaling  in dimension $d > 5$ if the interaction parameter $\lambda$ is small enough. 
Since the critical dimension is not expected to depend
on the intensity of the repulsion parameter,  this strongly suggests
that also the (strictly) prudent walk converges to Brownian motion under diffusive scaling
in any dimension $d >5$ (which we prove only if the dimension is large enough),
and, thus,  that the critical dimension is at most five. 

We conjecture that the upper critical dimension for the prudent walk is precisely five.
This implies that it is   \textit{strictly greater} than for the self-avoiding walk, which has  upper critical dimension four. 
Our  conjecture is supported by the following considerations. 

It is generally expected that the prudent bubble diagram in Definition \ref{def:PrudentBubble} is finite whenever the corresponding prudent \emph{random walk} bubble diagram $\|C_1\ast C_1\ast \1\|_\infty$ is finite. Indeed, the latter is finite if and only if $d>5$, because $(C\ast C)(x)\approx |x|^{4-d}$ and this is summable over $x\in e_1\Z$ whenever $4-d<-1$. This suggests that also the (ordinary) prudent bubble diagram is infinite for $d\le5$. 

Alternatively, one may study the expected number of times that two simple random walks ``see each other'': 
Let  $(X^1(t))_{t \in \mathbb{N}_0}$, $(X^2(t))_{t \in \mathbb{N}_0}$  be two independent simple random walks starting from the origin, and 
let $X^i_j$ denote the $j$th coordinate of $X^i$.  By the Markov property and reversibility,
\begin{align*}
& \mathbb E \Big (  \sum\limits_{s, t >0}^{\infty}   \boldsymbol{1}_{  \{ X^1(t) \downarrow  X^2(s) \}   } \Big )  
 = 
 \frac{1}{2d}   \mathbb E \Big (  \sum\limits_{s > 0, t > 0}^{\infty}   \I{X^2(s)}\big (X^1(t-1) \big )  \Big )  
   =   \frac{1}{2d}  \mathbb E \Big (  \sum\limits_{s > 0, t \geq 0}^{\infty}   \I{0}\big (X^1(t + s) \big )  \Big ) \\
&\qquad
 =   \frac{1}{2d}  \mathbb E \Big (  \sum\limits_{t>0}^{\infty} \, t \,   \I{0}\big (X^1(t) \big )  \Big ) 
 =    \frac{1}{2}  \mathbb E \Big (  \sum\limits_{t > 0}^{\infty} \, t \,   
\boldsymbol{1}_{  \big \{ X_j^1(t)   = 0 \forall j \neq d,  \, \, X_d^1(t) \neq 0 \big  \}}   \Big )  \sim  C  \sum\limits_{t=1}^{\infty}  t^{ - \frac{d-1}{2}  + 1 },
\end{align*} 
which is finite if and only if $d > 5$. 

The above calculation suggests that models with $\ell$-dimensional avoidance constraints have upper critical dimension $4+\ell$. 
Self-avoiding walk may be viewed as a repellent model with a 0-dimensional avoidance constraint (and has upper critical dimension 4), whereas prudent walk has a one-dimensional avoidance constraint. 

We observe a similar shift in the upper critical dimension for oriented percolation (which has upper critical dimension 4+1) but that is caused by a change of the infrared bound (e.g.\ \cite[Thm.~12.1]{SladeBook}), whereas in our model the shift is caused by a change in the bubble diagram.

\section{Lace Expansion and diagrammatic estimates}\label{sec:laceexpbound}
We next derive the lace expansion for prudent walks, and derive \emph{diagrammatic bounds} that give quantitative control over the lace expansion-related quantities. 
As we noted above, the presence of non-localized infinite-range avoidance constraints results in a new type of inhomogeneous lace-expansion diagrams, and therefore the lace-expansion analysis varies significantly from classical self-avoiding walk. 
%

\subsection{Lace Expansion}
\label{sec:laceexpansion}
We shall next recall the lace expansion for self-intersecting walks. We follow the presentation in Slade's exposition about self-avoiding walks \cite[Chapter 3]{SladeBook} or \cite[Chapter 5]{MadraSlade93}).
\begin{definition}[Graphs and Laces] ~
\label{def:graphslaces}
\begin{itemize}
\item Given an interval $I =[a,b]$ of integers ($a,b\in\Z$, $0\le a<b$), we refer to a pair
$\{s,t\}$, $s < t$ of elements of $I$ as an edge. To abbreviate the notation, we usually write $st$ for $\{s,t\}$. 
The length of an edge $st$ is $t-s$. A set of edges is called a \emph{graph}. 
\item A graph $\Gamma$ is said to be \emph{connected} if both $a$ and $b$ are end-points of edges in $\Gamma$, and if in addition, for any $c \in (a,b)$, there are $s, t \in [a,b]$ such that $s < c < t$ and $st \in \Gamma$. The set of all graphs on $[a,b]$
is denoted by $\mathcal{B}[a,b]$, and the subset consisting of all connected graphs is denoted by $\mathcal{G}[a,b]$. 
\item A \emph{lace} is a minimally connected graph, i.e, a connected graph for which the removal of any edge would result in a disconnected graph.
The set of laces on $[a,b]$ is denoted by $\mathcal{L}[a,b]$
and the set of laces on $[a,b]$ consisting of exactly $N$ edges
is denoted by $\mathcal{L}_N[a,b]$.
\end{itemize}
\end{definition}
See Figure \ref{fig2} for an illustration of the terms \emph{graph} and \emph{lace}, and \cite[Chapter 3]{SladeBook} for further explanation about these quantities. 

Given a connected graph $\Gamma$, the following prescription associates to $\Gamma$ a unique lace $\mathcal{L}_{\Gamma}$: the lace $\mathcal{L}_{\Gamma}$ consists of edges 
$s_1 t_1$, $s_2 t_2$, $\ldots$ where 
\begin{align}\label{eq:constructionst}
\begin{aligned}
s_1 & = a, \\  t_1  & = \max \{  t : at \in \Gamma \} 
\end{aligned}
\qquad \text{and for $i\ge 2$}\qquad
\begin{aligned}
  t_{i+1}  &=  \max \{  t : st \in \Gamma, s < t_i \} \\
s_i &  =  \min \{  s : st_i \in \Gamma \}.
\end{aligned}
\end{align}
We let $\mathcal{C}(L)$ be the set of all edges which are `compatible' with the lace $L$, that is, all edges $st\not\in L$ such that $\cL_{L\cup\{st\}}=L$.
We finally introduce the \textit{lace expansion coefficients} as 
\begin{align}\label{def:laceexpansion}
\Pi^\lambda_z(x)=&\sum_{N=1}^\infty (-1)^N \Pi_z^{(N)}(x),
\end{align}
where
\begin{equation}\label{eq:PiN}
\Pi_z^{(N)}(x) =  
	\sum_{w\in \Wa(0,x)} 
	z^{|w|} \sum_{L\in\cL_N[0,|w|]} \, 
\Big (  \prod_{st\in L}  -\lambda  U_{st}(w)  \Big ) \,
\Big (  \prod_{s't'\in \cC(L)} \big (1+ \lambda U_{s't'}(w) \big ) \, \Big ),
 \end{equation}
(the sum is over all -not necessarily prudent- walks). 
While $\Pi_z^{(N)}(x)$ depends on $\lambda$, we suppress this in the notation. 

\begin{proposition}[Lace expansion]\label{theo:recursionforG}
Let $\lambda \in [0,1]$, 
and $|z|<z_c(\lambda)$, then
\begin{equation}\label{eq:decompositionGx}
G^\lambda_z(0,x) = \delta_{0,x} + z \sum\limits_{u \sim 0} G_z^\lambda(u,x) + \sum\limits_{v \in \mathbb{Z}^d} \Pi^\lambda_z(0,v) \, G_z^\lambda(v,x),
\end{equation}
where $u\sim 0$ means that $u$ and $0$ are neighbors (i.e., $|u|=1$). 
\end{proposition}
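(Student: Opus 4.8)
The plan is to follow the now-standard inclusion–exclusion argument that underlies the lace expansion for self-interacting walks, adapted to the weights $\phi^\lambda(w)=\prod_{0\le s<t\le n}(1+\lambda U_{st}(w))$ defined in \eqref{eq:defcnlambda}. First I would fix $x\in\Z^d$ and work at the level of the coefficients $c^\lambda_n(0,x)$; the identity \eqref{eq:decompositionGx} is then obtained by multiplying by $z^n$ and summing over $n$, which is justified for $|z|<z_c(\lambda)$ since all the series involved converge absolutely there. For a walk $w\in\Wa_n(0,x)$ with $n\ge1$, the factor $\phi^\lambda(w)$ factorises according to whether an edge $st$ has $s=0$ or not: writing $K[a,b](w)=\prod_{a\le s<t\le b}(1+\lambda U_{st}(w))$ and $J[a,b](w)=\sum_{\Gamma\in\mathcal G[a,b]}\prod_{st\in\Gamma}\lambda U_{st}(w)$, the usual combinatorial identity $K[a,b]=\sum_{\Gamma\in\mathcal B[a,b]}\prod_{st\in\Gamma}\lambda U_{st}=\sum_{j=a}^{b}\,(\text{connected piece on }[a,j])\cdot K[j,b]$ gives
\begin{equation}\label{eq:Kexpand}
K[0,n](w)=K[1,n](w)+\sum_{j=1}^{n} J[0,j](w)\,K[j,n](w),
\end{equation}
where the $j=1$ term uses the convention that $J[0,1]$ comes from the single edge $01$, i.e.\ $J[0,1](w)=\lambda U_{01}(w)$. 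I would prove \eqref{eq:Kexpand} exactly as in \cite[Chapter 3]{SladeBook} or \cite[Chapter 5]{MadraSlade93}: expand the product over all edges, group the graphs by the largest $j$ such that the restriction to $[0,j]$ is connected (with $j=0$ meaning no edge touches $0$, which yields the $K[1,n]$ term, factoring out the independent product on $[1,n]$).

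Next I would sum \eqref{eq:Kexpand} over $w\in\Wa_n(0,x)$. The term $K[1,n](w)$ depends only on the walk after its first step: splitting $w$ into its first step $0\to u$ (with $u\sim0$) and the remaining $(n-1)$-step walk from $u$ to $x$, and noting that $U_{st}$ for $1\le s<t\le n$ only involves $w(1),\dots,w(n)$ up to the global translation that has no effect on $U$, this term contributes $\sum_{u\sim0}c^\lambda_{n-1}(u,x)$. For the terms with $J[0,j](w)$, I would cut the walk at time $j$: the piece $w\!\restriction_{[0,j]}$ is an arbitrary $j$-step walk from $0$ to some $v:=w(j)$, the piece $w\!\restriction_{[j,n]}$ is an arbitrary $(n-j)$-step walk from $v$ to $x$, and — this is the key point — since each $U_{st}(w)$ for $s<t\le j$ depends only on $w(0),\dots,w(j)$ and each $U_{s't'}(w)$ for $j\le s'<t'$ depends only on $w(j),\dots,w(n)$, the weights $J[0,j]$ and $K[j,n]$ factorise over this decomposition. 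Hence summing over $v$ and over the two pieces, and recognising that $\sum_{w\in\Wa_j(0,v)} z^j J[0,j](w)$ is (up to sign conventions) exactly $\Pi^\lambda_z(0,v)$ after the connected-graph-to-lace resummation $J[a,b]=\sum_{L\in\mathcal L[a,b]}\prod_{st\in L}(-\lambda U_{st})\prod_{s't'\in\mathcal C(L)}(1+\lambda U_{s't'})$ from \eqref{def:laceexpansion}–\eqref{eq:PiN}, gives the last term of \eqref{eq:decompositionGx}. The $\delta_{0,x}$ term is the $n=0$ contribution.

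The genuinely delicate points, which I would treat carefully, are three. First, the resummation of connected graphs into laces: one must verify that $\sum_{\Gamma\in\mathcal G[a,b]}\prod_{st\in\Gamma}\lambda U_{st}=\sum_{L\in\mathcal L[a,b]}\prod_{st\in L}(-\lambda U_{st})\prod_{s't'\in\mathcal C(L)}(1+\lambda U_{s't'})$, which follows from grouping connected $\Gamma$ by their associated lace $\mathcal L_\Gamma$ via \eqref{eq:constructionst} and summing freely over the compatible edges; this is where the $(-1)^N$ and the structure of $\Pi_z^{(N)}$ in \eqref{def:laceexpansion}–\eqref{eq:PiN} come from, and it is standard but must be checked to apply verbatim despite $U_{st}$ now encoding the infinite-range ``sees'' relation rather than a simple self-intersection. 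Second, and specific to the prudent setting: I must confirm that the factorisation of $\phi^\lambda$ across a cut at time $j$ really holds, i.e.\ that no $U_{st}(w)$ ``straddles'' the cut in a way that couples the two pieces — here the point is that $U_{st}(w)$ involves only $w(t-1),w(t),w(s)$, so every edge is internal to $[0,j]$ or to $[j,n]$, never both; no edge $st$ with $s<j<t$ contributes, because such edges are exactly the ones excluded from $\mathcal C(L)$ for laces living on $[0,j]$. This locality of the interaction in the \emph{time} variable (even though it is infinite-range in \emph{space}) is what makes the expansion go through unchanged. Third, convergence: interchanging $\sum_n z^n$ with the finite-for-each-$n$ algebraic identities requires absolute convergence, which holds for $|z|<z_c(\lambda)$ because $|\Pi^\lambda_z(0,v)|\le\sum_n z^n|\{w\in\Wa_n(0,v)\}|\cdot(\text{bounded by }1)$ type bounds give a series dominated by $\chi(2d|z|)$-like quantities — this is purely formal at this stage and the quantitative control is the subject of Sections~\ref{sec:laceexpbound}–\ref{sec:bootstrap}. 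I expect the main obstacle in writing the proof to be none of the algebra per se (which is classical) but rather stating precisely enough what ``$U_{st}(w)$ depends only on $w(s),w(t-1),w(t)$'' buys us, so that a reader sees why the prudent/weakly-prudent interaction, despite being infinite-range in space, still satisfies the one hypothesis the lace expansion needs: that the interaction weight is a product over time-edges with each edge anchored at two (well, three) consecutive-ish time points.
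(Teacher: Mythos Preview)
Your proposal is correct and follows precisely the standard Brydges--Spencer/Madras--Slade derivation that the paper itself invokes; the paper does not give an independent proof but simply cites \cite[Theorem 5.2.3]{MadraSlade93} and \cite[Chapter 3]{SladeBook}, noting that the argument depends only on the abstract graph structure and not on the particular form of $U_{st}$. Your write-up in fact supplies more detail than the paper does, and your emphasis on why the time-locality of $U_{st}(w)$ (it depends only on $w(s),w(t-1),w(t)$) suffices for the walk-factorisation step---despite the infinite spatial range---is exactly the paper's point. One minor expository quibble: your remark that crossing edges $st$ with $s<j<t$ are absent ``because such edges are exactly the ones excluded from $\mathcal C(L)$'' is not the right justification; their absence is built into the graph-level identity $K[0,n]=K[1,n]+\sum_j J[0,j]K[j,n]$ itself (the unique-$j$ decomposition of a graph into its maximal connected piece at $0$ and the remainder produces no crossing edges), independently of any lace or compatibility notion. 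This does not affect the validity of your argument.
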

This result is well-known in the lace-expansion literature for self-avoiding walks, see  \cite[Section 5.4]{MadraSlade93} or \cite[Chapter 3]{SladeBook}. Let us stress that the statement of the theorem is independent of the precise interpretation of $U_{st}$, and concerns only the underlying graph structure that we use to decompose the two point function $G_z$. The proof is thus analogous to that of \cite[Theorem 5.2.3]{MadraSlade93}.

\begin{figure}
 \centering
 \includegraphics[scale=0.20]{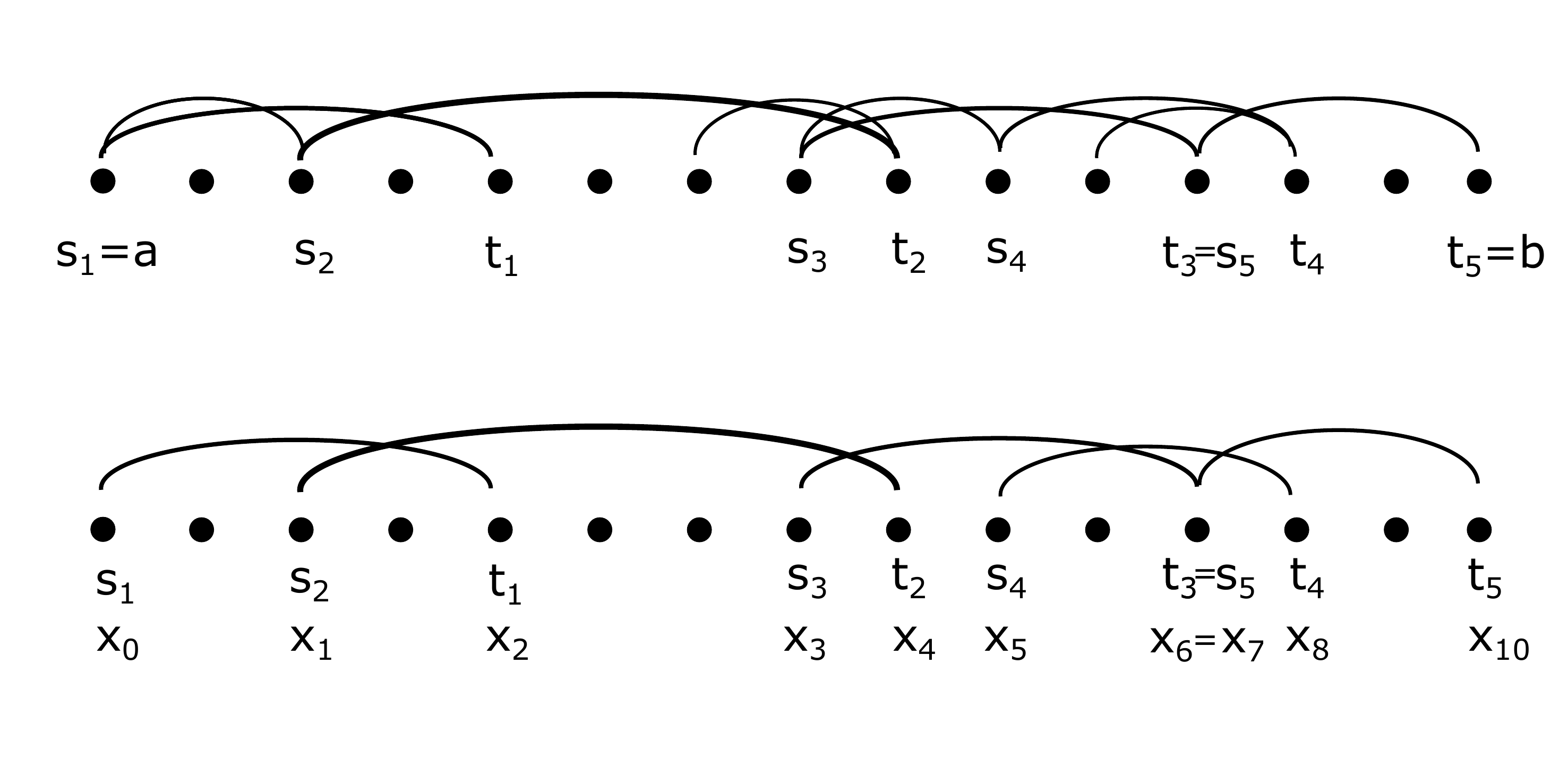}
 \caption{\textit{Top:} an example of graph $\Gamma$ in the interval $I = [a,b]$. 
 \textit{Bottom:} the corresponding lace $\mathcal{L}_\Gamma$.
 }
 \label{fig2}
\end{figure}

\subsection{Diagrammatic bounds} 
\label{sec:diagrammatic}
In the whole section we fix $\lambda \in \big (0, 1 \big ]$ and  $z\in[0,z_c(\lambda))$. 
We recall the bubble diagram, defined in \eqref{def:bubbleB}, 
and define for all $x \in \mathbb{Z}^d$ and $k \in [-\pi, \pi]^d$ the function
\begin{equation}\label{eq:defHzk}
\sY^\lambda_{z,k}(x):= [1 - \cos(k \cdot x)] \, G^\lambda_z(x). 
\end{equation}
We then have the following result which estimates $\Pi_z^{(N)}$ (and thus $\Pi^\lambda_z$)
 in terms of $\sB^\lambda_z$ and $\sY^\lambda_z$.
\begin{proposition}[Diagrammatic Bounds]\label{thm:equi41}
 For any $N \geq 1$, 
\begin{align}\label{eq:bounds2}
\sum\limits_{ x \in \mathbb{Z}^d}
  \Pi_z^{(N)}(x)  & \leq   (dz\lambda)^N   \,   (\sB_z^\lambda )^{N},
  \\
\label{eq:bounds2b}
\sum\limits_{ x \in \mathbb{Z}^d }
[ 1 - \cos(k \cdot x)    ] \, \Pi_z^{(N)}(x)  & \leq \,  
(dz\lambda)^N 
(3 N-1) (2 N - 1) 
\| \sY^\lambda_{z,k}* \1   \|_{\infty} (\sB^{\lambda}_z)^{N-1} \\ \nonumber
&  \quad \quad \quad + (d z\lambda )^N (3 N - 1) N (\sB^{\lambda}_z)^N (1 - \hat{D}(k)).
\end{align}
\end{proposition}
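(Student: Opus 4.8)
The plan is to follow the standard Hara--Slade scheme for diagrammatic estimates on lace-expansion coefficients, adapted to the fact that the interaction $U_{st}$ here contributes factors of the modified indicator $\1$ rather than a usual self-avoidance indicator. The starting point is the representation \eqref{eq:PiN}: expanding the product over $\cC(L)$ trivially by bounding $|1+\lambda U_{s't'}(w)|\le 1$ (each factor lies in $[0,1]$ since $\lambda\in(0,1]$ and $U_{s't'}\in\{-1,0\}$), one obtains
\[
\Pi_z^{(N)}(x)\le \lambda^N \sum_{w\in\Wa(0,x)} z^{|w|}\sum_{L\in\cL_N[0,|w|]}\prod_{st\in L}\big(-U_{st}(w)\big).
\]
A lace $L\in\cL_N[0,|w|]$ with edges $s_1t_1,\dots,s_Nt_N$ has the well-known structure $0=s_1<s_2\le t_1<s_3\le t_2<\dots$, which decomposes the time interval $[0,|w|]$ into $2N-1$ consecutive subintervals. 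Each factor $-U_{s_it_i}(w)=\1\big(w(t_i)-w(s_i)\big)$ with $w(t_i)$ seeing $w(s_i)$; crucially, $-U_{st}(w)=\I{w(s)}(w(t))\le \1(w(t)-w(s))\cdot(\text{factor from the step }w(t)-w(t-1))$, and summing the constraint "$w(t)$ sees $w(s)$" over the $2d$ possible last increments produces the factor $\1(w(t)-w(s))$ up to a combinatorial $2d$; one then reorganizes the sum over $w$ and over the $2N-1$ subintervals as a convolution of two-point functions $G_z^\lambda$ linked at the "rungs" by $\1$'s. Tracking constants, each of the $N$ rungs carries $\lambda$, and converting "step-in-a-given-direction" into "$\1$" costs a factor of $dz$ per rung (the $z$ absorbed from $z^{|w|}$, using $\sum_{u\sim 0} z\, G_z^\lambda(u,\cdot)\le \dots$), which after bounding each of the resulting closed sub-loops by $\sB_z^\lambda$ gives \eqref{eq:bounds2}: the diagram built from $N$ rungs and $2N-1$ propagator-segments collapses into $N$ factors of $\|G_z^\lambda * G_z^\lambda * \1\|_\infty = \sB_z^\lambda$, with prefactor $(dz\lambda)^N$.

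For \eqref{eq:bounds2b} the standard trick is to distribute the factor $1-\cos(k\cdot x)$, where $x=w(|w|)$ is the endpoint, over the diagram. Writing $x$ as the telescoping sum of the displacements across the $2N-1$ subintervals, $x=\sum_{j=1}^{2N-1}\Delta_j$, and using the elementary inequality $1-\cos\big(\sum_j a_j\big)\le (2N-1)\sum_j \big(1-\cos(a_j)\big)$ together with the splitting $1-\cos(u+v)\le 2(1-\cos u)+2(1-\cos v)$ iterated, one places the $[1-\cos(k\cdot\Delta_j)]$ factor on one segment at a time. A segment carrying such a factor becomes $\sY^\lambda_{z,k}$ instead of $G_z^\lambda$; if that segment is one of the "rung-adjacent" propagators it gets absorbed into $\|\sY^\lambda_{z,k}*\1\|_\infty$ (this is the first term of \eqref{eq:bounds2b}, with the combinatorial count $(3N-1)(2N-1)$ coming from the number of ways to choose which of the $\le 3N-1$ lines of the diagram receives the $1-\cos$ and the $2N-1$ from the telescoping), and the remaining $N-1$ loops are bounded by $\sB_z^\lambda$. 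There is, however, a second mechanism: the displacement may be supported on a single step of the walk, i.e.\ on an increment of the two-point function sitting "inside" a loop rather than on a rung; extracting the $1-\cos$ from a single $D$-step yields the factor $1-\hat D(k)$, with the rest of the diagram bounded by $(\sB_z^\lambda)^N$ and combinatorial count $(3N-1)N$ — this is the second term of \eqref{eq:bounds2b}.

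The main obstacle I anticipate is the bookkeeping of the combinatorial constants $(3N-1)(2N-1)$ and $(3N-1)N$, and more substantively the careful handling of the rungs: unlike self-avoiding walk where a rung is simply the indicator $\{w(s)=w(t)\}$ contributing a $\delta$-factor (hence a genuine loop closure), here the rung $-U_{st}(w)=\I{w(s)}(w(t))$ is a nonlocal constraint depending on the increment $w(t)-w(t-1)$, so one must be careful that the "sees" relation is correctly converted into a $\1$-rung together with the extraction of a free step-direction sum, and that this conversion is compatible with the simultaneous need to extract a $1-\cos$ or a $1-\hat D$ factor from a neighbouring line. Ensuring that overlapping subintervals (the $s_{i+1}\le t_i$ sharing) do not lead to double-counting when a single segment is shared between two rungs is the delicate point; I would handle this exactly as in \cite[Chapter 3]{SladeBook} or \cite[Chapter 5]{MadraSlade93}, treating the shared segment as contributing to at most one of the loop bounds and enlarging the combinatorial prefactor accordingly, and I expect the stated constants to be exactly what this accounting produces.
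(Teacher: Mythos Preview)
Your overall strategy is the right one and matches the paper's, but there is a genuine missing device that is not mere bookkeeping.

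The rung constraint $-U_{st}(w)$ is \emph{not} $\I{w(s)}(w(t))$: it depends on the direction of the last step $w(t)-w(t-1)$, not just on $w(t)-w(s)$. The paper resolves this by introducing, for each rung endpoint $x_{2i}$ (or $x_{2N-1}$), an auxiliary second-last point $x'_{2i}$ and the function
\[
\mathcal{N}_{a,b}(x)=\tfrac1d\;\mathbf 1\{a\bot b,\ a\neq b,\ x\in\overline{ab},\ x\sim a\},
\]
which records simultaneously that the walk was at $x'_{2i}$, that $x'_{2i}\bot x_{2i-3}$, and that the final step goes towards $x_{2i-3}$. Summing $\sum_x\mathcal N_{a,b}(x)=\1(a-b)$ is what converts the directional ``seeing'' constraint into the $\1$-rung and accounts for the factor $dz$ per rung. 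Without this second-last-point device the conversion you describe (``step-in-a-given-direction into $\1$'') does not go through cleanly, and in particular the simultaneous extraction of a $1-\cos$ factor from a neighbouring line cannot be organised.

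A direct consequence is that the telescoping is over $3N-1$ displacements, not $2N-1$: each rung contributes \emph{two} segments, namely a $G$-line $(x_{2i-1},x'_{2i})$ and a single-step edge $(x'_{2i},x_{2i})$, on top of the $N-1$ horizontal $G$-lines. The constant $(3N-1)$ in \eqref{eq:bounds2b} is the prefactor $n$ in the inequality $1-\cos(\sum_{j=1}^n a_j)\le n\sum_j(1-\cos a_j)$ applied with $n=3N-1$; the $(2N-1)$ counts the $G$-lines (cases where the $1-\cos$ lands on a propagator, giving $\|\sY^\lambda_{z,k}*\1\|_\infty$), and the $N$ counts the single-step edges (case where it lands on a step $x_{2i}-x'_{2i}$, giving by symmetry $1-\hat D(k)$). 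Your interpretation of these constants is not quite right, and the second mechanism you describe is exactly this single-step case, not a step ``inside a loop''.

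Finally, when the $1-\cos$ lands on a $G$-line one cannot simply bound the modified block directly: the paper performs a nontrivial change of variables (translating one $G$-line around the diagram) so that the weighted segment becomes free-standing and can be bounded by $\|\sY^\lambda_{z,k}*\1\|_\infty$ while the rest factorises into $(\sB_z^\lambda)^{N-1}$. This step has no exact analogue in \cite{MadraSlade93} or \cite{SladeBook}, so ``handle it as in the standard references'' does not suffice here.
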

The rest of the section is dedicated to the proof of Proposition \ref{thm:equi41}, which is based on the representation in \eqref{eq:PiN}. 
We first prove the statement 
for $N=1$, and subsequently for $N\ge 2$.
In order to lighten the notation, we suppress the dependence on  $\lambda$ and $z$ from the notation. 

\medskip

\proof[Proof of \eqref{eq:bounds2} and \eqref{eq:bounds2b}, {\bf case $N=1$}.]
We start with \eqref{eq:bounds2}. We use \eqref{eq:PiN} for $N=1$, in which case each lace consists of only a single edge from $0$ to the last step of the walk $|w|>0$.
This implies that the last step of each walk $w$ in \eqref{eq:PiN}  is such that  $w( |w| ) \downarrow 0$, namely 
 $w( |w| )- w( |w| - 1 )  \in e_i \mathbb{Z}$ for some $i \in [d]$.
In turn, this implies that the second-last step of the walk lies on one of the Cartesian axes,
$w( |w| - 1 ) \bot 0,$
and differs from the origin.
If $x = (0, \ldots, x_i, \ldots,  0 )$ and $x_i \neq 0$, then, from the definition of $\Pi^{(1)}(x)$ in \eqref{eq:PiN} we get
\begin{align}\label{eq:Pi1x}
	\Pi^{(1)}(x)
    \le d z  \lambda  \,G \big (0, x + e_i \, \sign(x_i) \big ) \1{ (x + e_i) }.
    \end{align}
Moreover,
    \begin{align}
	\Pi^{(1)}(0)
   \le z\lambda \,   \sum_{y \sim 0} \,G \big (0, y \big ) 
= d z  \lambda \,   \sum\limits_{y \sim 0} \,G\big (0, y \big ) \1(y),
    \end{align}
    Finally, if $x$ does not belong to the Cartesian axes, then $\Pi^{(1)}(x) = 0$. By summing over $x$ in $\mathbb{Z}^d$, using the inequalities above and finally using $\delta_{0,x}\le G(x)$, we deduce
    $$
    \sum\limits_{x \in \mathbb{Z}^d} \Pi^{(1)}_z(x) 
    \leq d z \lambda \, (G * \1) (0) 
    \leq d z \lambda \, (G * G * \1) (0) 
    \leq d z \lambda  \,  \sB_z^\lambda,
    $$
    thus obtaining (\ref{eq:bounds2}) when $N=1$.
    
    \smallskip
    
We now prove \eqref{eq:bounds2b}. 
First we use the fact that $\Pi_z^{(1)}(x) = 0$ if $x$ does not fulfill $x \bot 0$ and thus
\begin{equation}\label{eqPi1Bd1}
\sum\limits_{x \in \mathbb{Z}^d}
[ 1 -  \cos(k \cdot x)    ] \, \Pi_z^{(1)}(x)  
=
\lambda z  \sum\limits_{i=1}^{d} \sum\limits_{x \in e_i \mathbb{Z} \setminus \{0\}} G(0,x ) \big ( 1 - \cos ( k \cdot (x- \sign(x_i) e_i)   ) \big  ). 
\end{equation}
Here, $x$ corresponds to the second-last point of the walk and $z$, corresponding to the weight of the last step of the walk, has been factorised as in  \eqref{eq:Pi1x}.

A key ingredient in bounding  \eqref{eq:bounds2b} is given by the following formula, proven as Lemma 2.13 in \cite{FitznHofst17}: 
for $n \in \mathbb{N}$, $x_1, \ldots, x_n \in \mathbb{Z}^d$, and $k\in\R^d$, 
    \begin{multline}\label{eq:inequalitycos}
    1 - \cos(k \cdot x_n) \leq n \Big [ 
    (1 - \cos (k \cdot x_1))  +     (1 - \cos (k \cdot (x_2-x_1)) +
    \\ 
    \cdots + (1 - \cos (k \cdot (x_n-x_{n-1}))
    \Big ].
    \end{multline}
We first use (\ref{eq:inequalitycos}) for $n=2$ with  $x_1=x$ and $x_2=x \pm e_i$, obtaining that
    \begin{equation*}
    1 - \cos \big  (k \cdot (x \pm  e_i) \big ) \leq 2 \big [ \big  ( 1 - \cos (k \cdot x    \big  ) 
    + \big(1 - \cos (k_i) \big)\big ].
    \end{equation*}
Inserting this bound into \eqref{eqPi1Bd1}, we obtain that 
\begin{equation*}
\begin{split}
\sum\limits_{x \in \mathbb{Z}^d}
[ 1 - & \cos(k \cdot x)    ] \, \Pi_z^{(1)}(x)  
\\
& \leq 2 z \lambda 
\sum\limits_{i=1}^{d} \sum\limits_{x \in e_i \mathbb{Z} \setminus \{0\}} G(0,x) ( 1 - \cos ( k \cdot x   ) )
+ 2 z  \lambda 
\sum\limits_{i=1}^{d} 
\sum\limits_{x \in e_i \mathbb{Z} \setminus \{0\}} G(0,x ) ( 1 - \cos ( k_i ) )  \\
& \leq 2 d z \lambda   \, 
 \| \sY_{k,z}^\lambda \,  * \, \1   \|_{\infty} 
 \, + \, 2 d z  \lambda  \|  G_z^\lambda *  \1    \|_{\infty} (1-\hat D(k))\, ,
 \end{split}
\end{equation*}
where in the last inequality we use the symmetry to get that $\sum_{x \in e_i \mathbb{Z} \setminus \{0\}} G(0,x ) =  \sum_{x  \in \mathbb Z^d} G(0, x ) \1(x)$.
Since $ \|  G_z^\lambda *  \1    \|_{\infty} \leq \sB^\lambda_z$, this concludes the proof of the case $N=1$.
\qed

\medskip

{\bf Case $N\ge 2$.} In order to prove \eqref{eq:bounds2} for $N\ge2$, we first formulate an auxiliary lemma. 
To this end, we introduce the function $\mathcal N_{a,b}(x)$ for $a, b, x \in \mathbb{Z}^d$, as
\begin{equation}\label{eq:defN}
\mathcal{N}_{a,b} (x) := 
\begin{cases}
\frac{1}{d}  & \mbox{ if $a \bot b$,   $a \neq b$,  $x\in \overline{ab}$, $x \sim a$ ,}\\
0 & \mbox{ otherwise,} 
\end{cases}
\end{equation}
where $\overline{ab}$ is the segment in $\mathbb{R}^d$ with end-points  $a$ and $b$, while $a\bot b$ is the condition introduced in 
\eqref{def:abotx}. We will often use the following identity,
\begin{equation}\label{eq:propertyN}
\sum\limits_{ x \in \mathbb{Z}^d}  \mathcal{N}_{a,b} (x)  =  \I{b}(a) =  \1(a-b),
\end{equation}
where $ \I{b}(a)$ is defined in \eqref{ourindicator}. We use the function $\mathcal{N}_{a,b} (x) $ to identify the points that violate the prudent condition in order to use the lace expansion.
We then have the following upper bound for $\Pi_z^{(N)}(x)$:
\begin{lemma}\label{lemma:lemmaN}
 For any $N \geq 2$ we let ${\bf x}^{N}= (x_1, x_2^\prime, x_2, x_3, x_4^{\prime}, x_4,
\ldots, x_{2N-2},  x^\prime_{2N-1}, x_{2N-1})\in (\mathbb{Z}^d)^{3N-1}$ with $x_{2N-1}=x$. 
Then
 \begin{equation}\label{eq:lemmaN}
 \begin{split}
 \sum\limits_{x \in \mathbb{Z}^d}  \Pi_z^{(N)}(x)  
& 
\leq (z d \lambda)^N  \sum\limits_{  {\bf x}^{N}   \in (\mathbb{Z}^d)^{3N-1} }  \Bigg\{ 
 \prod_{i\in \{1, \ldots, N-1\} }
\Big(
G(x_{2 i-2},x_{2i-1}) 
 G(x_{ 2 i -1 },x^\prime_{2 i})
\mathcal{N}_{x^\prime_{2 i}, x_{2i-3}}(x_{2i})
\Big)
 \\ & \hspace{4cm} \times G(x_{2N - 2}, x_{2N-1}^\prime) \, \,
 \mathcal{N}_{x_{2N-1}^\prime,x_{2N-3}}(x_{2N-1})
\Bigg\},
\end{split}
\end{equation}
where we use the convention that $x_{-1}=x_0 = 0$.
\end{lemma}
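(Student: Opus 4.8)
The plan is to unfold the definition of $\Pi_z^{(N)}(x)$ from \eqref{eq:PiN} and bound the contribution of each lace edge by an indicator of the prudent violation it records, then reorganize the resulting expression into a product of two-point functions and $\mathcal N$-functions. Recall that a lace $L\in\cL_N[0,|w|]$ with $N$ edges has a rigid combinatorial structure: writing $L=\{s_1t_1,\dots,s_Nt_N\}$ with $s_1=0$, the edges satisfy $s_1<s_2\le t_1<s_3\le t_2<\dots$, so that consecutive edges overlap. In \eqref{eq:PiN}, each factor $-\lambda U_{s_jt_j}(w)$ equals $\lambda$ precisely when $w(t_j)\downarrow w(s_j)$, i.e.\ when $w(s_j)-w(t_j)\in\N_0(w(t_j)-w(t_j-1))$; in particular $w(t_j-1)\bot w(s_j)$. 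I would drop the compatible-edge factors $\prod_{s't'\in\cC(L)}(1+\lambda U_{s't'}(w))$ using $0\le 1+\lambda U\le 1$, so that
\[
\sum_{x}\Pi_z^{(N)}(x)\;\le\;\lambda^N\sum_{w\in\Wa(0,\cdot)}z^{|w|}\sum_{L\in\cL_N[0,|w|]}\prod_{j=1}^N\mathbf 1\{w(t_j)\downarrow w(s_j)\}.
\]

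Next I would parametrize each admissible walk-plus-lace by the sequence of ``branch points'' dictated by the lace structure. Cutting the walk $w$ at the times $0=s_1<s_2<t_1<s_3<t_2<\cdots$ and recording the positions of $w$ at $t_j-1$, $t_j$, and at the intermediate lace times, each walk segment between two consecutive marked times contributes a factor $|\Wa_m(a,b)|$, which after extracting the $z^m$ sums to a two-point-function factor $G_z(a,b)$ (here one uses that removing the self-intersection constraint only increases the count, which is exactly why the sum is over all walks in \eqref{eq:PiN}). The condition $w(t_j)\downarrow w(s_j)$, combined with the fact that $w(t_j-1)$ and $w(t_j)$ are nearest neighbors and $w(t_j-1)$ lies on the segment from $w(t_j)$ towards $w(s_j)$, is exactly what the function $\mathcal N_{a,b}(x)$ in \eqref{eq:defN} encodes: with $a=w(t_j-1)$ (renamed $x'_{2i}$), $b=w(s_j)$ (which by the overlap structure is an earlier marked point, namely $x_{2i-3}$), and $x=w(t_j)$ (renamed $x_{2i}$, a near-neighbor of $a$ on $\overline{ab}$). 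The factor $\frac1d$ in $\mathcal N$ accounts for the $d$ possible axis directions, and the prefactor $(zd\lambda)^N$ collects: $\lambda^N$ from the lace weights, $z^N$ from the $N$ ``short'' steps $w(t_j-1)\to w(t_j)$, and $d^N$ from compensating the $\frac1d$ normalizations so that the summand matches the displayed product of full two-point functions $G(x_{2i-2},x_{2i-1})G(x_{2i-1},x'_{2i})$.

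The main obstacle, and where care is needed, is the bookkeeping of the index shifts: one must verify that in a lace with $N$ edges the ``source'' $w(s_{j})$ of the $j$-th prudent constraint coincides with the marked position two steps earlier in the chain — this is precisely the $x_{2i-3}$ appearing in $\mathcal N_{x'_{2i},x_{2i-3}}$, and the conventions $x_{-1}=x_0=0$ handle the first two edges where the source is the origin. I would check this by induction on $N$ using the recursive structure \eqref{eq:constructionst} that builds the lace, or equivalently by the standard picture (Figure \ref{figN}) in which the lace decomposes into a sequence of overlapping ``loops''. A secondary point is to confirm that the endpoint constraint $x_{2N-1}=x$ together with the last $\mathcal N_{x'_{2N-1},x_{2N-3}}$ factor correctly records the final prudent violation and that no $z$ or $d$ factors are double-counted. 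Once the index alignment is pinned down, \eqref{eq:lemmaN} follows; summing out the variables via \eqref{eq:propertyN} and the definition \eqref{def:bubbleB} of $\sB_z^\lambda$ then yields \eqref{eq:bounds2} for $N\ge2$, completing that half of Proposition \ref{thm:equi41}.
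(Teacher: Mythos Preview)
Your overall plan is the right one and matches the paper's approach: decompose the walk at the lace times, record the second-last and last positions at each $t_j$, and encode the prudent violation $w(t_j)\downarrow w(s_j)$ via $\mathcal N$. However, there is a genuine gap in the step where you write
\[
\text{``drop the compatible-edge factors }\prod_{s't'\in\cC(L)}(1+\lambda U_{s't'}(w))\text{ using }0\le 1+\lambda U\le1\text{''}
\]
and then claim each segment contributes $|\Wa_m(a,b)|$, summing to $G_z(a,b)$. If you drop \emph{all} compatible-edge factors, the segment sum is $\sum_m |\Wa_m(a,b)|z^m=C_z(a,b)$, the \emph{simple random walk} Green's function, not the prudent two-point function $G=G_z^\lambda$ appearing in \eqref{eq:lemmaN}. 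This is not merely a weaker bound: since $z_c(\lambda)>\tfrac1{2d}$ (cf.\ \eqref{eq:inequalityconnective}), $C_z$ diverges for $z$ close to $z_c$, so the resulting expression would be infinite and the argument breaks down.

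The fix, and what the paper actually does, is to drop only the \emph{cross-segment} compatible-edge factors, i.e.\ those $s't'\in\cC(L)$ with $s'$ and $t'$ lying in different sub-intervals of the lace decomposition. The factors $(1+\lambda U_{s't'})$ with both endpoints inside a single sub-interval are retained; one checks that every edge entirely contained in a sub-interval is automatically compatible with $L$, so the retained product over a sub-walk on $[a,b]$ is exactly $\phi^\lambda$ of that sub-walk, and summing over sub-walks from $x$ to $y$ produces $G_z^\lambda(x,y)$, as required. A minor additional slip: you write that ``$w(t_j-1)$ lies on the segment from $w(t_j)$ towards $w(s_j)$'', but in fact it is $w(t_j)$ that lies on the segment $\overline{w(t_j-1)\,w(s_j)}$; this is precisely the condition $x_{2i}\in\overline{x'_{2i}\,x_{2i-3}}$ in the definition \eqref{eq:defN} of $\mathcal N$, so your identification $a=x'_{2i}$, $b=x_{2i-3}$, $x=x_{2i}$ is correct despite the verbal description.
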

We refer to Figure \ref{figN} for a diagrammatic representation of \eqref{eq:lemmaN}. 

 \begin{figure}
\centering
 \includegraphics[scale=1.5]{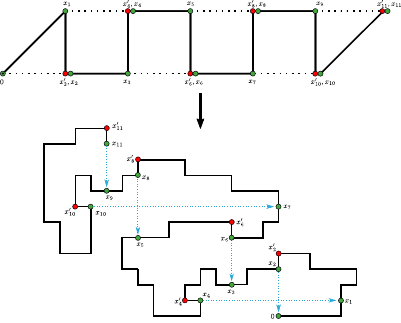}
 \caption{Diagrammatic bounds in case $N = 6$. Solid lines represent $G$, whereas dotted lines represent $\1$. At the bottom a (strictly) prudent path associated with the diagrammatic bounds is represented. Blue arrows mark the dotted lines of the diagram.}
 \label{figN}
\end{figure}

\begin{proof}[Proof of Lemma \ref{lemma:lemmaN}]
We use \eqref{eq:PiN} and decompose the prudent walk $w$ into $2N-1$ prudent sub-walks,
$w^1\in\Wa(0,x_1), w^2\in\Wa(x_1,x_2),\dots,w^{2N-1}\in\Wa(x_{2N-2},x_{2N-1})$.
We first sum over the end-points of such sub-walks, 
 $x_1, x_2, \ldots x_{2N-1} = x$,
and then sum over the sub-walks connecting such points.
 By \eqref{eq:Ust} and \eqref{eq:PiN},  the decomposition is such that 
the walk $w^2$ sees the origin at its last step (recall definition of \emph{seeing} after \eqref{eq:Ust}).
Moreover, for each even integer $i \in [1, 2N-2]$ 
the prudent walk $w^{i}$ sees the point 
$x_{i-3}$ at its last step, 
and the last walk 
$w^{2N-1}$ sees $x_{2N-3}$ at its last step.
For each even integer 
 $i \in [2, 2N-2]$ we denote by $x_i^\prime$ the location of the second-last step of the walk $w^i$, and we denote
by $x^\prime_{2N-1}$ the location of the second-last step of the walk
$w^{2N-1}$ (which is then a neighbour of $x_{2N-1}=x$).
Let us observe that
 the walk $w^2$ sees $0$ at its last step, thus
 $x_2^\prime \bot 0$, 
$x_2^\prime \neq 0$,
and $x_2 \bot 0$.
Similarly, since for each even integer $i \in [4, 2N-2]$ as well as for $i=2N-1$ the walk $w^i$ sees $x_{i-3}$ at its last step, thus $x^\prime_i \bot x_{i-3}$,
$x^\prime_i  \neq x_{i-3}$
 and $x_i \bot x_{i-3}$. Moreover, the last step of the sub-walks  $w^i$ for each even $i \in [2, 2N-2]$ and for the sub-walk $w_{2N-1}$ starts at $x^\prime_{i}$ and end at the neighbour $x_i$.  
Therefore, for $i \in [1, 2N-2]$ odd, the contribution given by the $i$th sub-walk is given by $G(x_{i-1}, x_{i})$, while for  $i \in [1, 2N-2]$ even or $i = 2N-1$, the contribution given by the $i$th sub-walk is given by
 $(dz\lambda )\, G(x_{i-1}, x_{i}^\prime)\cN_{x_{i}^\prime, x_{i-3}}(x_{i})$.
To get \eqref{eq:lemmaN}, we then neglect the
interaction between different sub-walks by upper bounding the factor  $\prod_{s^\prime t^\prime\in \cC(L)}(1+\lambda U_{s^\prime t^\prime})$ in \eqref{eq:PiN}
by $1$ for pairs $s^\prime, t^\prime$ such that $s^\prime$ and $t^\prime$ refer to steps belonging to different sub-walks.  
\end{proof}


\proof[Proof of \eqref{eq:bounds2} when $N\ge 2$.]
The goal is to upper bound (\ref{eq:lemmaN})
by a product of independent blocks then by taking the supremum over the points which are `seen' by some of the sub-walks in the sum.  The sums corresponding to each such block is then bounded from above by $\sB_z^\lambda$.  See Figure \ref{figUPblock}. 

\begin{figure}\centering
 \includegraphics[scale=1.4]{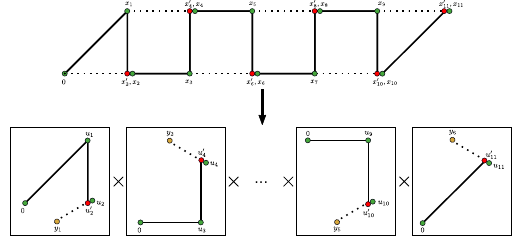}
   \caption{Top: Representation of \eqref{eq:lemmaN}. Bottom: Representation of  \eqref{eq:lemmaN_1}, with $N=6$. 
   }
 \label{figUPblock}
\end{figure}

For $i=1, \ldots, N-1$ we perform the following change of variables
\begin{align*}
u_{2i-1}=x_{2i-1}-x_{2 i-2}, \qquad
u_{2i}^\prime=x^\prime_{2 i}-x_{2 i-2}, \qquad
u_{2i}=x_{2i}-x_{2 i-2},
\end{align*} 
and  
\begin{align*}
u_{2N-1}^\prime= x_{2N-1}^\prime-x_{2N - 2}, \qquad u_{2N-1} = x_{2N-1}-x_{2N - 2},
\end{align*}
 so that, if 
${\bf u}^{N}= (u_1, u_2^\prime, u_2, u_3, u_4^{\prime}, u_4,
\ldots, u_{2N-2}^\prime, u_{2N-2}, u^\prime_{2N-1}, u_{2N-1})$,
then the translation invariance gives that the right hand side of \eqref{eq:lemmaN} equals
\begin{align}\label{eq:lemmaN_0}
& (z d \lambda)^N  \sum\limits_{  {\bf u}^{N}   \in (\mathbb{Z}^d)^{3N-1} }  \Bigg\{ 
 \prod_{i\in \{1, \ldots, N-1\} }
\Big(
G(0,u_{2i-1}) 
 G(u_{ 2 i -1 },u^\prime_{2 i})
 \mathcal{N}_{u^\prime_{2 i}, u_{2i-3}-u_{2i-2}}(u_{2i})
\Big)
 \\ \notag 
 & \hspace{4cm} \times G(0, u_{2N-1}^\prime) \, \,
 \mathcal{N}_{u_{2N-1}^\prime,u_{2N-3}-u_{2N - 2}}(u_{2N-1})
\Bigg\},
\end{align}
where we have used that $x_{2i-3}-x_{2i-2}=u_{2i-3}-u_{2i-2}$. 
We observe that $u_{2N-1}$ and $u_{2N-1}^\prime$ are present only in the last product. Therefore, we can first split the original sum and then take the sup over the last point seen by $u_{2N-1}$. We obtain that \eqref{eq:lemmaN_0} is bounded by
\begin{align}
& (z d \lambda)^N  \sum\limits_{  \tilde {\bf u}^{N}   \in (\mathbb{Z}^d)^{3N-1} }  \Bigg\{ 
 \prod_{i\in \{1, \ldots, N-1\} }
\Big(
G(0,u_{2i-1}) 
 G(u_{ 2 i -1 },u^\prime_{2 i})
 \mathcal{N}_{u^\prime_{2 i}, u_{2i-3}-u_{2i-2}}(u_{2i})
\Big)
 \\ \notag 
 & \hspace{4cm} \times  \sup_{y_{N} \in \mathbb{Z}^d} \sum\limits_{  u_{2N-1}^\prime, u_{2N-1}\in \mathbb Z^d }  G(0, u_{2N-1}^\prime) \, \,
 \mathcal{N}_{u_{2N-1}^\prime, y_N}(u_{2N-1})
\Bigg\},
\end{align}
where $\tilde{\bf u}^{N}:= (u_1, u_2^\prime, u_2, u_3, u_4^{\prime}, u_4,
\ldots, u_{2N-2}^\prime, u_{2N-2}$). We observe that the last term in the product is independent from the rest of the sum, see Figure \ref{figUPblock} for a graphic representation.

We iterate this strategy obtaining that the previous expression is bounded by 
\begin{align}
\label{eq:lemmaN_1}
&(z  d \lambda)^N  
 \prod_{i\in \{1, \ldots, N-1\} } \Bigg\{ \sup_{y_i \in \mathbb{Z}^{d}} 
 \sum\limits_{ u_{2i-1}, u_{2i}^\prime, u_{2i} \in \mathbb Z^d } \Big(
G(0,u_{2i-1}) 
 G(u_{ 2 i -1 },u^\prime_{2 i})
 \mathcal{N}_{u^\prime_{2 i}, y_i}(u_{2i})
\Big)\Bigg\}
\\ \notag
&\hspace{2.8cm} \times   
 \sup_{y_{N} \in \mathbb{Z}^d} \sum\limits_{  u_{2N-1}^\prime, u_{2N-1}\in \mathbb Z^d }   G(0, u_{2N-1}^\prime) \, \,
\mathcal{N}_{u_{2N-1}^\prime,y_N}(u_{2N-1}). 
\end{align}
Using that for all $v, y\in \mathbb Z^d$, $\sum_{u} \mathcal{N}_{v,y}(u)=\I{y}(v)$ (cf.  \eqref{eq:propertyN}), we obtain that \eqref{eq:lemmaN_1} equals
\begin{align}  \label{eq:lemmaN_2}
&(z d  \lambda)^N  
 \prod_{i\in \{1, \ldots, N-1\} } \Bigg\{ \sup_{y_i \in \mathbb{Z}^{d}} 
 \sum\limits_{ u_{2i-1}, u_{2i}^\prime  \in \mathbb Z^d } \Big(
G(0,u_{2i-1}) 
 G(u_{ 2 i -1 },u^\prime_{2 i})
 \I{y_i}(u_{2i}^\prime)
\Big)\Bigg\}
\\ \notag
&\hspace{2.8cm} \times   
 \sup_{y_{N} \in \mathbb{Z}^d} \sum\limits_{  u_{2N-1}^\prime \in \mathbb Z^d }   G(0, u_{2N-1}^\prime)
\I{y_N}(u_{2N-1}^\prime),
\end{align}
The result now follows by observing that each term inside the curly brackets is equal to  $\sB^\lambda_z$, while the last term is smaller than $\sB^\lambda_z$.
\qed

 
We are now ready to  prove \eqref{eq:bounds2b}.

\proof[Proof of  \eqref{eq:bounds2b} when $N\geq 2$.]
We start again from \eqref{eq:PiN} and use the same decomposition
of the prudent walk $w$ into $2N-1$ prudent sub-walks $w^1\in\Wa(0,x_1)$, 
$\ldots$, $w^{2N-1}\in\Wa(x_{2N - 2}, x_{2N-1})$, with $x_{2N-1}=x$, which was presented in the proof of Lemma \ref{lemma:lemmaN}.
Given such a decomposition and the sequence of $3N-1$ points
$ x_1, x_2, x_2^{\prime}$, $x_3$, \ldots, $x_{2N-1} = x$, we use the upper bound \eqref{eq:inequalitycos} obtaining 
\begin{equation}\label{eq:cosboundN}
\begin{split}
1 - \cos (k \cdot x)  \leq & \;
(3N - 1) \,  \Bigg [ \sum\limits_{i=0}^{N-2} \big((1 - \cos ( k \cdot  (x_{2i+1}-x_{2i}) )\big) 
\\ & +
\sum\limits_{i=1}^{N-1} \big(1 - \cos ( k \cdot (  x_{2i}^\prime - x_{2i-1})) \big)
+ \big(1 - \cos ( k \cdot (  x_{2N-1}^\prime - x_{2N-2}) ) \big)
\\ & +
\sum_{i=1}^{N-1} \big(1 - \cos (k \cdot  (x_{2i} - x_{2i}^{\prime}   ))\big)
+ \big(1 - \cos (k \cdot  (x_{2N-1} - x_{2N-1}^{\prime}   )) \big)
\Bigg ],
\end{split}
\end{equation}
where we recall the convention $x_{0}=0$.
We can thus ``distribute'' the displacement term $1 - \cos (k \cdot x)$ along the solid lines in Figures \ref{figN}, where the first summand corresponds to the horizontal lines, the second summand corresponds to the vertical lines, and the third summands accounts for the single edges between red and green marked vertices.
Hence, the modification of the upper bound  \eqref{eq:bounds2} due to the factor  $1 - \cos (k \cdot x)$ is simply 
\begin{enumerate}[(a)]
\item 
to replace one of the 
functions $G(x_{2i-2}, x_{2i-1})$
 by 
$G(x_{2i-2}, x_{2i-1}) \big ( 1 - \cos ( k \cdot ( x_{2i-1} -x_{2i-2}  )  )  \big )$, for $i=1, \ldots, N$;
\item 
to replace one of the 
functions
$G(x_{2i-1}, x^\prime_{2i})$
 by 
 $G(x_{2i-1}, x^\prime_{2i}) \big ( 1 - \cos ( k \cdot (  x^\prime_{2i} -x_{2i-1}   )  \big )$, for $i=1, \ldots, N-1$,
 or $G(x_{2N-2}, x^\prime_{2N-1})$
 by 
 $G(x_{2N-2}, x^\prime_{2N-1}) \big ( 1 - \cos ( k \cdot (  x^\prime_{2N-1} -x_{2N-2}   )  \big )$;
 \item to replace one of the 
functions
$ \mathcal{N}_{x^\prime_{2 i}, x_{2i-3}}(x_{2i})$
by $ \mathcal{N}_{x^\prime_{2 i}, x_{2i-3}}(x_{2i}) (1 - \cos ( k \cdot( x_{2i} - x_{2i}^{\prime}  )  ) )$, for $i=1, \ldots, N-1$, or 
$\mathcal{N}_{x_{2N-1}^\prime,x_{2N-3}}(x_{2N-1})$
by
 $\mathcal{N}_{x_{2N-1}^\prime,x_{2N-3}}(x_{2N-1}) (1 - \cos ( k\cdot ( x_{2N-1} - x_{2N-1}^\prime )  ) )$. 
 \end{enumerate}
 We then consider separately the cases (a), (b) and (c).
 
\paragraph{Case (a).} 
 We consider the case where for a $j \in \{1,2, \ldots, N-1\}$ the function $G(x_{2j-2}, x_{2j-1})$ in the right-hand side of \eqref{eq:lemmaN}
 is replaced by $\big (1 - \cos (k  \cdot ( x_{2j-1} - x_{2j-2}  )  ) \big ) G(x_{j-2}, x_{2j-1})$. The case $j=N$ is easier and follows similarly, we omit the details.
\begin{figure}
\centering
 \includegraphics[scale=1.4]{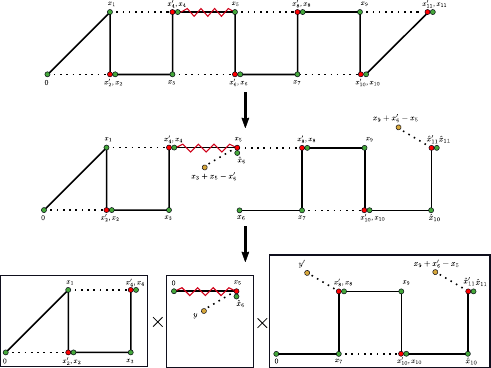}
 \caption{Diagrammatic decomposition in case (a) with $N= 6$ and $j= 3$. 
 In the middle the diagrammatic decomposition after the change of variables \eqref{eq:case_a2} and at the bottom \eqref{eq:case_asup}.}
 \label{figNa}
\end{figure}
This replacement is bounded by (see Figure \ref{figNa}),
\begin{align}\label{eq:case_a}
(d z \lambda)^N \, \sum\limits_{ \substack{  x_{2j-2}, x_{2j-3}\in \mathbb Z^d \\ \underline{\bf x}^n \in (\mathbb{Z}^d)^{ 3(N-j)+2 }}}& L^{(j-1)}(x_{2j-3},x_{2j-2}) 
\big (1 - \cos (k  \cdot ( x_{2j-1} - x_{2j-2}  )  ) \big ) G(x_{2 j-2}, x_{2j-1} )
\\ \notag
&  \times G(x_{2 j-1}, x_{2j}^\prime )
\mathcal{N}_{x^\prime_{2 j}, x_{2j-3}}(x_{2j})
 \\ \notag
& \qquad \times \prod_{i\in \{j+1, \ldots, N-1\} }
\Big\{
G(x_{2 i-2},x_{2i-1}) 
 G(x_{ 2 i -1 },x^\prime_{2 i})
 \mathcal{N}_{x^\prime_{2 i}, x_{2i-3}}(x_{2i})
\Big\}
 \\ \notag & \qquad \qquad \times G(x_{2N - 2}, x_{2N-1}^\prime) \, \,
 \mathcal{N}_{x_{2N-1}^\prime,x_{2N-3}}(x_{2N-1}),
\end{align}
where
$  \underline{\bf x}^{n}  = ( x_{2j-1}, x_{2j},$ $ x_{2j}^\prime, x_{2j+1}, $ $ x_{2j+2}^\prime, $ $  x_{2j+2}, x_{2j+3},$ $  \ldots, x_{2N-3}, x_{2N-2}^\prime, $ $  x_{2N-2},  x_{2N-1}^\prime,  x_{2N-1}) $ and 
\begin{align}
\notag
L^{(n)}(x,y) & : =
 \sum\limits_{  \substack{  \tilde{\bf x}^{n}  \in (\mathbb{Z}^d)^{3n-3} \\ y^\prime \in \mathbb{Z}^d  }}  
 \Bigg\{ 
 \prod_{i\in \{1, \ldots, n-1\} }
\Big(
G(x_{2 i-2},x_{2i-1}) 
 G(x_{ 2 i -1 },x^\prime_{2 i})
\mathcal{N}_{x^\prime_{2 i}, x_{2i-3}}(x_{2i})
\Big)
 \\ \label{eq:definitionLN}
 & \hspace{4cm} \times  G(x_{2n-2}, x    ) G(x, y^\prime)  \,
 \mathcal{N}_{y^\prime, x_{2n-3}}(y)
\Bigg\}.
\end{align}
Let us note that with the same argument used to upper bound \eqref{eq:lemmaN}, for any $n\ge 1$,
\begin{align}
\label{eq:auxiliary1}
\sum_{x,y\in \mathbb Z^d} L^{(n)}(x,y) & \le   (\sB_z^\lambda)^n\, .
\end{align}

Our goal is to obtain the diagram represented at the bottom of Figure \ref{figNa} with a suitable change of variables. For this purpose we use translation invariance to replace 
\begin{itemize}[$*$]
\item $\mathcal{N}_{x^\prime_{2 j}, x_{2j-3}}(x_{2j})$  by
$\mathcal{N}_{x_{2 j-1}, x_{2j-3} + x_{2j-1} - x_{2j}^\prime}(x_{2j}+x_{2j-1}-x_{2j}^\prime),$
\item $
G(x_{2j-1}, x_{2j}^\prime)
  \mbox{ by } 
G(x_{2N-2}, x_{2j}^\prime-x_{2j-1}+x_{2N-2}),$
\item 
$G(x_{2N-2}, x_{2N-1}^\prime)
  \mbox{ by } 
G( x_{2j}^\prime-x_{2j-1}+x_{2N-2}, x_{2j}^\prime-x_{2j-1}+x_{2N-1}^\prime)$,
\item 
$ \mathcal{N}_{x^\prime_{2N-1}, x_{2N-3}}(x_{2N-1}) $
by
$ \mathcal{N}_{x_{2j}^\prime-x_{2j-1}+x_{2N-1}^\prime, x_{2N-3}+x_{2j}^\prime-x_{2j-1}}(x_{2j}^\prime-x_{2j-1}+x_{2N-1}) $
\end{itemize}
in the right-hand side of \eqref{eq:case_a}.
Moreover, we make a change of variables defining
\begin{multicols}{2}
\begin{itemize}[$*$]
     \item  $\hat{x}_{2j} := x_{2j} + x_{2j-1} - x_{2j}^\prime$,
     \item $\hat x_{2N-1}:=x_{2j}^\prime-x_{2j-1}+x_{2N-1}$,
     \item $\hat x_{2N-1}^\prime:=x_{2j}^\prime-x_{2j-1}+x_{2N-1}^\prime$,
     \item $\hat x_{2N-2}:=x_{2j}^\prime-x_{2j-1}+x_{2N-2}$,
\end{itemize}
\end{multicols}
\noindent thus replacing the sum over
$x_{2j}$,
$x_{2j}^\prime$,
$x_{2N-1}$,
$x_{2N-1}^\prime$
by the sum over 
$\hat{x}_{2j}$
$\hat{x}_{2j}^\prime$,
$\hat{x}_{2N-1}$,
$\hat{x}_{2N-2}$.
This leads to 
\begin{equation}\label{eq:case_a2}
\begin{split}
(d z \lambda)^N \, & \sum\limits_{ \substack{  x_{2j-2}, x_{2j-3}\in \mathbb Z^d \\ \underline{\bf x}^n \in (\mathbb{Z}^d)^{ 3(N-j)+2 }}}  L^{(j-1)}(x_{2j-3},x_{2j-2}) 
\\ 
& \times
\big (1 - \cos (k  \cdot ( x_{2j-1} - x_{2j-2}  )  ) \big ) G(x_{2 j-2}, x_{2j-1} )
\mathcal{N}_{x_{2 j-1}, x_{2j-3}+x_{2j-1} - x_{2j}^\prime}( \hat{x}_{2j} )
\\ &\qquad
 \prod_{i\in \{j+1, \ldots, N-1\} }
\Big\{
G(x_{2 i-2},x_{2i-1}) 
 G(x_{ 2 i -1 },x^\prime_{2 i})
 \mathcal{N}_{x^\prime_{2 i}, x_{2i-3}}(x_{2i})
\Big\}
 \\  & \qquad \qquad \times 
G(x_{2N-2},  \hat{x}_{2N-2}) G(  \hat{x}_{2N-2},  \hat{x}_{2N-1}^\prime) \, \,
 \mathcal{N}_{ \hat{x}_{2N-1}^\prime, x_{2N-3}+x_{2j}^\prime-x_{2j-1}}( \hat{x}_{2N-1}),
\end{split}
\end{equation}
where 
we denote by $\underline{\bf x}^n$
the vector of variables over which we sum.
We refer to the middle of Figure \ref{figNa} for a graphic representation of the previous sum.
Note that the old variables  $x_{2j}$ and $x_{2j}^\prime$ appearing in the sum are interpreted as a deterministic function of the variables introduced after the change of coordinates, over which we sum.
The previous expression can be bounded from above by 
\begin{equation}\label{eq:case_asup}
\begin{split}
(d z \lambda)^N \,  
& \sum\limits_{ x_{2j-2}, x_{2j-3}\in \mathbb Z^d}  L^{(j-1)}(x_{2j-3},x_{2j-2}) \\ 
& \times
\sup_{y} \Bigg \{ \sum\limits_{x_{2j-1}, \hat{x}_{2j}\in \mathbb Z^d}
\big (1 - \cos (k  \cdot ( x_{2j-1} - x_{2j-2}  )  ) \big ) G(x_{2 j-2}, x_{2j-1} )
\mathcal{N}_{x_{2 j-1},y}(\hat x_{2j}) 
 \\ & 
\times
 \sup_{y^\prime} \Big\{
 \sum\limits_{ \tilde{\bf x}^{n}\in (\mathbb Z^d)^{3(N-j)-3}}
 G(x_{2j},x_{2j+1}) 
 G(x_{ 2 j+1 },x^\prime_{2 j+2})
\mathcal{N}_{x^\prime_{2j+2}, y^\prime}(x_{2j+2})
 \\&\qquad \qquad \times
 \prod_{i\in \{j+2, \ldots, N-1\} }
\big\{
G(x_{2i-2},x_{2i-1}) 
 G(x_{ 2 i -1 },x^\prime_{2 i})
\mathcal{N}_{x^\prime_{2 i}, x_{2i-3}}(x_{2i})
\big\}
 \\ & \qquad \qquad \times 
 \sum\limits_{\hat{x}_{2N-2}, \hat{x}_{2N-1}^\prime, \hat{x}_{2N-1}   }
G(x_{2N-2},  \hat{x}_{2N-2}) G(  \hat{x}_{2N-2},  \hat{x}_{2N-1}^\prime) \, 
\\  & \qquad\qquad \times 
 \mathcal{N}_{ \hat{x}_{2N-1}^\prime, x_{2N-3}+x_{2j}^\prime-x_{2j-1}}( \hat{x}_{2N-1})
 \Big \} \Bigg \},
\end{split}
\end{equation}
where $ \tilde{\bf x}^{n}  = ( x_{2j+1}, $ $ x_{2j+2}^\prime, $ $  x_{2j+2}, x_{2j+3},$ $  \ldots, x_{2N-3}, x_{2N-2}^\prime, $ $  x_{2N-2}) $.
By translation invariance we note that the supremum over $y^\prime$ does not depend on $x_{2j}$ and the supremum over $y$ does not depend on $x_{2j-2}$, which can then be replaced both by $0$, see the bottom of Figure \ref{figNa}. 
Let us note that the last term in the sum that depends on the supremum over $y^\prime$ is independent of the rest of the sum, so that it can be bounded from above by $(\sB_z^\lambda)^{N-j}$
by using the same strategy as for the upper bound to (\ref{eq:lemmaN}). To be more precise, the previous expression can be bounded from above by
\begin{align}\label{eq:case_asup2}
\sup_{y^\prime} & \big \{\sum\limits_{x_{2j+1}, x_{2j+2}^\prime, x_{2j+2} }
G(0,x_{2j+1}) 
 G(x_{ 2 j +1 },x^\prime_{2 j+2})
 \mathcal{N}_{x^\prime_{2 j+2}, y^\prime}(x_{2j+2}) \big \}
 \\ \notag & \times 
 \prod_{i\in \{j+2, \ldots, N-1\} }
\sup_{y_i}\big\{
\sum\limits_{x_{2i-1}, x_{2i}^\prime, x_{2i} }
G(x_{2i-2},x_{2i-1}) 
 G(x_{ 2 i -1 },x^\prime_{2 i})
 \mathcal{N}_{x^\prime_{2 i}, y_i}(x_{2i}) \big \}
\\ \notag
&\times 
\sup_{y_N} \big \{
\sum\limits_{\hat{x}_{2N-2}, \hat{x}_{2N-1}^\prime, \hat{x}_{2N-1}   }
G(x_{2N-2},  \hat{x}_{2N-2}) G(  \hat{x}_{2N-2},  \hat{x}_{2N-1}^\prime) \, 
 \mathcal{N}_{ \hat{x}_{2N-1}^\prime,y_N}( \hat{x}_{2N-1}) \big  \}
 \Big \}\\ \notag
&= (\sB_z^\lambda)^{N-j},
\end{align}
where for the last identity we have used the translation invariance and (\ref{eq:propertyN}) to observe that each supremum in the previous expression equals $\sB_z^\lambda$.
For the remaining term, we use again (\ref{eq:propertyN}) 
to obtain that 
\begin{align*}
&    \sup\limits_{ y \in \mathbb{Z}^d}
  \sum\limits_{ \substack{ u, v \in \mathbb{Z}^d }}
\big (1 - \cos (k  \cdot u  ) \big ) \, 
G(0, u  )  \, 
 \mathcal{N}_{u,   y}(
v  )   
= \|  \sY_{z,k}^\lambda * \1  \|_\infty 
\end{align*}
and we use (\ref{eq:auxiliary1}) to bound the remaining part of the sum by $(\sB_z^\lambda)^{j-1}$.
Hence from this we conclude  
that \eqref{eq:case_a}
is less or equal than
 $(dz \lambda)^N
  \,   \|  \sY_{z,k}^\lambda * \1  \|_\infty 
    (\sB^{\lambda}_z)^{N-1}.$
    This concludes case ({\rm a}).

 \paragraph{Case (b).}
We  consider the case in which one of the functions $G(x_{2j-1}, x^\prime_{2j})$ in the right-hand side of \eqref{eq:lemmaN} is replaced  by 
 $G(x_{2j-1}, x^\prime_{2j}) \big ( 1 - \cos ( k \cdot (  x^\prime_{2j} -x_{2j-1}   )  \big )$
 for some $j \in \{1, \ldots, N-1\}$. 
 The case in which $G(x_{2N-2}, x^\prime_{2N-1})$ is replaced by
 $G(x_{2N-2}, x^\prime_{2N-1}) \big ( 1 - \cos ( k \cdot (  x^\prime_{2N-1} -x_{2N-2}   )  \big )$ follows similarly. 
  \begin{figure}[t]
\centering
 \includegraphics[scale=1.4]{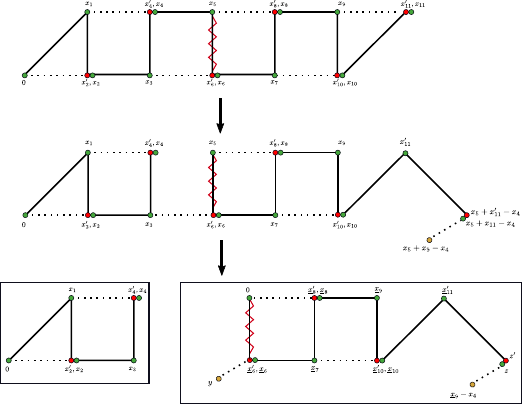}
 \caption{Top : Diagrammatic decomposition in case (b), with $j=3$ and $N=6$.
Middle and bottom : \eqref{eq:b-c} and \eqref{eq:b-b}, respectively.
 }
 \label{figNb}
\end{figure}
This replacement leads to the summation, illustrated in  
 Figure \ref{figNb},
 \begin{equation}
 \label{eq:b-c}
 \begin{split}
(d z \lambda)^N \,& \sum\limits_{ \substack{  x_{2j-2}, x_{2j-3}\in \mathbb Z^d \\ \underline{\bf x}^n \in (\mathbb{Z}^d)^{ 3(N-j)+2 }}}  L^{(j-1)}(x_{2j-3},x_{2j-2})
\\  & \qquad  \times G(x_{2 j-2}, x_{2j-1} ) 
G(x_{2j-1}, x_{2j}^\prime) \big (1 - \cos (k ( x_{2j}^\prime - x_{2j-1}  )  ) \big )  
\mathcal{N}_{x^\prime_{2 j}, x_{2j-3}}(x_{2j}) \\
& \qquad \times \prod_{i\in \{j+1, \ldots, N-1\} }
\Big\{
G(x_{2 i-2},x_{2i-1}) 
 G(x_{ 2 i -1 },x^\prime_{2 i})
\mathcal{N}_{x^\prime_{2 i}, x_{2i-3}}(x_{2i})
\Big\}
 \\ & \qquad \times G(x_{2N - 2}, x_{2N-1}^\prime) \, \,
\mathcal{N}_{x_{2N-1}^\prime,x_{2N-3}}(x_{2N-1})
\end{split}
\end{equation}
 where
  $  \underline{\bf x}^{n}  = ( x_{2j-1}, x_{2j},$ $ x_{2j}^\prime, x_{2j+1}, $ $ x_{2j+2}^\prime, $ $  x_{2j+2}, x_{2j+3},$ $  \ldots, x_{2N-3}, x_{2N-2}^\prime, $ $  x_{2N-2},  x_{2N-1}^\prime,  x_{2N-1}) $.
We now use translation invariance and we replace,
\begin{itemize}[$*$]
\item  $G(x_{2j-2}, x_{2j-1})$ by $G(x^\prime_{2N-1}, x_{2j-1} +  x^\prime_{2N-1}   - x_{2j-2} )$, and
\item  $\mathcal{N}_{x_{2N-1}^\prime, \, x_{2N-3}}(x_{2N-1})$ by $ \mathcal{N}_{x_{2N-1}^\prime +  x_{2j-1} - x_{2j-2},\, x_{2N-3} +  x_{2j-1} - x_{2j-2}}(x_{2N-1} +  x_{2j-1} - x_{2j-2})$
\end{itemize}
in the previous expression.
In such a way,  we obtain a new sum which corresponds
to the middle diagram in Figure \ref{figNb}.
After this replacement we  perform a change of variables, i.e, we define
$\underline{x}_i = x_i + x_{2j-1}$
and $\underline{x}_i^\prime = x_i^\prime + x_{2j-1}$  for each $x_i$ and $x_i^\prime$ in the sum such that $i \geq 2j$, 
 $z= \underline{x}_{2N-1}+ x_{2j-1} - x_{2j-2} $ and
 $z^\prime= \underline{x}^\prime_{2N-1}+ x_{2j-1} - x_{2j-2} $
 and then obtain the following expression,
 \begin{equation}
 \label{eq:b-b}
 \begin{split}
(d z \lambda)^N &\, \sum\limits_{  x_{2j-3}, x_{2j-2}\in \mathbb Z^d}  L^{j-1}(x_{2j-3},x_{2j-2}) \\ 
&\times
\sup_{y\in \mathbb Z^d}\Bigg\{  \sum\limits_{  \underline{\bf x}^n \in (\mathbb{Z}^d)^{ 3(N-j)+2 }}  G(0, \underline{x}_{2j}^\prime)  \big (1 - \cos (k  \cdot  \underline{x}_{2j}^\prime    ) \big ) 
\mathcal{N}_{\underline{x}^\prime_{2 j},\, y}(\underline{x}_{2j}) \\
 & \times \prod_{i\in \{j+1, \ldots, N-1\} }
\Big\{
G(\underline{x}_{2 i-2},\underline{x}_{2i-1}) 
 G(\underline{x}_{ 2 i -1 },\underline{x}^\prime_{2 i})
 \mathcal{N}_{\underline{x}^\prime_{2 i}, \, \underline{x}_{2i-3}}( \underline{x}_{2i})
\Big\}
 \\ &  \times G(\underline{x}_{2N - 2}, \underline{x}_{2N-1}^\prime) \, \,G(\underline{x}^\prime_{2 N-1}, z^\prime ) 
\mathcal{N}_{z^\prime,\, \underline{x}_{2N-3} - {x}_{2j-2}}(z)\Bigg\} \, ,
\end{split}
\end{equation}
which corresponds to the bottom diagram in Figure \ref{figNb}.
In the previous sum we denoted by $\underline{\boldsymbol{x}}^n$ the vector corresponding 
to the set of  variables other than $x_{2j-3}$ and $x_{2j-2}$ which appear in the summand.

We now proceed similarly to the proof of \eqref{eq:bounds2}, 
 using the same argument as in \eqref{eq:case_asup}--\eqref{eq:case_asup2}
 to obtain that the expression inside the big curly brackets 
 is bounded from above by 
$\| \sY_{z,k}^\lambda * \1   \|_{\infty}(\sB_z^\lambda)^{N-j}$. 
We then use \eqref{eq:auxiliary1} to conclude case ({\rm b}).


 \paragraph{Case (c).} 
 We now consider the case
 in which the function 
$ \mathcal{N}_{x^\prime_{2 j}, x_{2j-3}}(x_{2j})$
in the right-hand side of
 \eqref{eq:lemmaN} is replaced by $\mathcal{N}_{x^\prime_{2 j}, x_{2j-3}}(x_{2j}) (1 - \cos ( k  \cdot ( x_{2j} - x_{2j}^{\prime}  )  ) $
for some $j=1, \ldots, N-1$. The  remaining case, in which 
$\mathcal{N}_{x_{2N-1}^\prime,x_{2N-3}}(x_{2N-1})$
is replaced by
 $\mathcal{N}_{x_{2N-1}^\prime,x_{2N-3}}(x_{2N-1}) (1 - \cos ( k \cdot ( x_{2N-1}^\prime - x_{2N-1} )  ) )$, follows analogously.
 This replacement leads to (cf. \eqref{eq:lemmaN}),
 \begin{equation}
 \label{eq:dec-(c)}
 \begin{split}
& (z d \lambda)^N  \sum\limits_{  {\bf x}^{N}   \in (\mathbb{Z}^d)^{3N-1} }  \Bigg\{ 
 \prod_{i\in \{1, \ldots, N-1\} }
\Big(
G(x_{2 i-2},x_{2i-1}) 
 G(x_{ 2 i -1 },x^\prime_{2 i})
\mathcal{N}_{x^\prime_{2 i}, x_{2i-3}}(x_{2i})
\Big)
 \\ & \hspace{2cm} \times G(x_{2N - 2}, x_{2N-1}^\prime) \, \,
 \mathcal{N}_{x_{2N-1}^\prime,x_{2N-3}}(x_{2N-1})
\Bigg\} \big(1 - \cos ( k \cdot ( x_{2j} - x_{2j}^{\prime}  ))\big).
\end{split}
\end{equation}
 We note that, from the definition of 
 $\mathcal{N}_{x^\prime_{2 j}, x_{2j-3}}(x_{2j})$,
 it is necessarily the case that 
$ x_{2j}$ is a neighbour of $x_{2j}^{\prime}$
(the sum equals $0$ otherwise).
Note also that, by symmetry, 
 the value of the whole sum restricted to the case 
$x_{2j} - x_{2j}^\prime = e$ for some unit vector $e \in \mathbb{Z}^d$ 
gives $C \, \big  (1 - \cos (k \cdot e  ) \big ) $,
for some constant $C$ which does not depend on $e$.
Hence, 
using the notation $k= (k_1, \ldots, k_d)$,
the whole 
expression equals,
\begin{equation*}
 \begin{split}
&  (z d \lambda)^N  \sum\limits_{  {\bf x}^{N}   \in (\mathbb{Z}^d)^{2N-1} }  \Bigg\{ 
 \prod_{i\in \{1, \ldots, N-1\} }
\Big(
G(x_{2 i-2},x_{2i-1}) 
 G(x_{ 2 i -1 },x^\prime_{2 i})
\mathcal{N}_{x^\prime_{2 i}, x_{2i-3}}(x_{2i})
\Big)
 \\ & \hspace{2cm} \times G(x_{2N - 2}, x_{2N-1}^\prime) \, \,
\mathcal{N}_{x_{2N-1}^\prime,x_{2N-3}}(x_{2N-1})
\Bigg\}  \frac{1}{d} \Bigg (\sum\limits_{i=1}^d \big (1 - \cos ( k_i   )  \big )  \Bigg ) .
\end{split}
 \end{equation*} 
By observing that $1-\hat D(k)=\frac1d\sum_{i=1}^d \big (1 - \cos ( k_i   )  \big )$ is non-negative, reasoning as in \eqref{eq:lemmaN_0} and \eqref{eq:lemmaN_1} we obtain that \eqref{eq:dec-(c)} is less or equal than
 $ (d z\lambda )^N \,  ( \sB_z^\lambda)^{N} \, (1-\hat D(k)),$
 which concludes case ({\rm c}).
 \qed

\section{Convergence of the lace expansion and bootstrap argument}\label{sec:bootstrap}
The goal of this section is to prove  
Theorem \ref{thm:BubbleCond}, which states the finiteness of the critical prudent bubble diagram in dimensions $d  > 5$ for the weakly prudent walk with parameter
$\lambda$ small enough or for the 
 strictly prudent walk in large enough dimensions.

\subsection{Bounds under the bootstrap assumption} 
Since the indicator function $\1$, which was introduced in \eqref{ourindicator}, is not integrable, it is not suited for a Fourier analysis. We therefore introduce the following approximation: for $R \in \mathbb{R}_{\geq 0}$ and $x \in \mathbb{Z}^d$ we define
 \begin{equation}\label{def:1-Rapprox}
 \IR (x) :=
 \begin{cases}
  \frac{1}{d} e^{ - \frac{x^2  }{R} } & \mbox{ if $x \bot  0$ and $x \neq 0$,} \\
 0 & \mbox{ otherwise,}
 \end{cases}
 \end{equation}
 where $x\bot 0$ was defined in \eqref{def:abotx}.
 Let us observe that $\lim_{R\to +\infty} \IR=\1$ pointwise.
We also define, for any $i \in \{1,\ldots, d\}$,
\begin{align}
\IR^i(x) : = 
 \begin{cases}
  \frac{1}{d} e^{ - \frac{x^2  }{R} } & \mbox{ if $x \in \boldsymbol{e}_i \mathbb{Z}$} \\
 0 & \mbox{ otherwise}
 \end{cases}
\end{align}
where $ \boldsymbol{e}_i $ is the $i$-th vector of the canonical base of $\mathbb R^d$.

Observe that $ \IR (x) :=  \sum_{i=1}^{d} \IR^i(x) - \delta_0$. In the next result we show that the Fourier transform of $\IR(x)+\delta_0$ is close to a Gaussian function.

\begin{lemma}\label{lem:Fourierindicatori}
There exists two positive constants $c_1,C_2>0$ independent of the dimension such that for any $i=1, \ldots, d$ and $k = (k_1, \ldots, k_d) \in [-\pi, \pi]^d$, 
\begin{equation}\label{eq:Fourierindicatori}
\frac{c_1\sqrt{ R} }{d}
e^{  - \frac{R}{2} \, k_i^2  }\le \hIR^i (k) \le 
\frac{C_2\sqrt{R} }{d}
e^{  - \frac{R}{2} \, k_i^2  }.
\end{equation}
\end{lemma}

\begin{proof}
Since $\hIR^i (k) = 
\frac 1d\sum_{w \in \mathbb{Z}} 
e^{- \frac{w^2}{R}} \, e^{i w k_i  }$ for any $k = (k_1, \ldots, k_d) \in [-\pi, \pi]^d$ and $i=1, \ldots, d$,
we fix $t\in \mathbb R$ and we consider
\begin{align}
\label{eq:fourier_tr}
\sum\limits_{w \in \mathbb{Z}} 
e^{- \frac{w^2}{R}} \, e^{i w t  }&=
\sum\limits_{w \in \mathbb{Z}} \sum\limits_{n = 0}^{+\infty}
\frac{(iwt)^n}{n!}e^{- \frac{w^2}{R}} 
= \sum\limits_{n = 0}^{+\infty} \frac{(it)^{2n }}{(2n)!}\sum\limits_{w \in \mathbb{Z}} w^{2n}e^{- \frac{w^2}{R}}, 
\end{align}
where we have used that $ \sum_{w \in \mathbb Z} w^ne^{- \frac{w^2}{R}} =0$ if $n\in 2\mathbb N+1$.
We note that there exist two positive constants $c,C>0$ such that 
\begin{align}\label{eq:sum_int_com}
 c\int_{\mathbb{R}} x^{2n}e^{- \frac{x^2}{R}} \dd x\le 
 \sum\limits_{w \in \mathbb{Z}} w^{2n}e^{- \frac{w^2}{R}} \le C\sqrt{R} (Rn)^{n}e^{-n} .
\end{align}
We then observe that 
\[
\int_{\mathbb R} x^{2n}e^{- \frac{x^2}{R}} \dd x =\sqrt{2\pi R}\Big(\frac{R}{2}\Big)^n \frac{(2n)!}{2^n n!},
 \]
and Stirling's formula gives
$
(Rn)^{n}e^{-n}\sim \frac{1}{\sqrt{2}}\Big(\frac{R}{2}\Big)^n \frac{(2n)!}{2^n n!}.
$
This concludes the proof. \end{proof}

\begin{remark}\label{rem:1pos}
It follows from \eqref{eq:Fourierindicatori} and from  $\IR (x) :=  \sum_{i=1}^{d} \IR^i(x) - \delta_0(x)$ that  the Fourier transform of 
\begin{equation}\label{def:IR*}
\IR^*  : = \IR  + \delta_0.
\end{equation}
is finite and positive for every   $R \in (0, \infty)$. 
We use this property in the proof of Lemma \ref{lem:equi510} below.
\end{remark}
For $k \in [-\pi, \pi]^d$, we define the operator $\Delta_k$ as
\begin{equation}
- \frac{1}{2} \Delta_k \hat{f}(\ell) = \hat{f}(\ell) \, - \, 
\frac{1}{2} \Big (  \hat{f}(\ell+k)  + \hat{f}(\ell-k)   \Big )
\end{equation}
For any $z \in [0, z_c)$, let $p(z)\in [0,\frac 1{2d})$ be the point such that $ \hat C_{p(z)}(0)=\hat G_z(0)$. 
We then define:
\begin{equation}
\begin{split}
U_{p(z)}( k,\ell) : =  \hat{C}_{p(z)}(k)^{-1} \Big(  \hat C_{p(z)}(\ell-k) & \hat C_{p(z)}(\ell)\\ &+
 \hat C_{p(z)}(k+\ell)  \hat C_{p(z)}(\ell)+ \hat C_{p(z)}(\ell-k) \hat C_{p(z)}(k+\ell)\Big)
 \end{split}
\end{equation}
and
\begin{equation} \label{eq:bootstr}
\left.\begin{aligned} f_1(z) & : =  2 \, d \, z \\
f_2(z) & : = 
\sup_{k \in [-\pi, \pi]^d} \frac{ |\hat{G}_z(k)|   }{ \hat{C}_{p(z)}(k)} \\
f_3(z) & : =  \sup\limits_{k,\ell \in [-\pi, \pi]^d} \frac{ \frac{1}{2} \Big |  \Delta_k \hat{G}_z(\ell)   \Big |     }{ U_{p(z)}(k,\ell)  }
\end{aligned}\right\}
\Longrightarrow f(z) := \max\big\{ f_1(z), f_2(z), f_3(z)   \big\}.
\end{equation}
Our goal is to show that,  if the dimension is large enough or the dimension is greater than $5$ and $\lambda$ is small enough and  we assume that $f(z) \leq  4$
for any $z \in (0, z_c)$ (the so-called \textit{bootstrap assumption}), it is actually the case that $f(z) \leq 2$. 
A first step towards a proof of such claim is the next lemma. 
\begin{lemma}\label{lem:equi510}
Suppose that $d  > 5$.  There exists an absolute  finite constant $\underline{c}$ (which is  independent from $z$,  $\lambda$ and the dimension $d$) such that, for any $z \ge 0$,
if  $f(z) \leq  K$ for some $K \in (0, \infty)$,  then
\begin{align}
\sB^\lambda_z =   \| G^\lambda_z   * G^\lambda_z  * \1    \|_{\infty} &   \leq K^2
 \frac{  \, \underline{c}}{d}, \label{eq:2}\\
\| \sY^\lambda_{z,k} *  \1 \|_{\infty} & \leq    \frac{3 \,  \underline{c} \, K \,  }{\hat C_{p(z)}(k)},  \label{eq:3}
 \end{align}
 where  $\sY^\lambda_{z,k}$ was defined in \eqref{eq:defHzk}.
 \end{lemma}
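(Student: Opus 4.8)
\textbf{Overall strategy.} The plan is to pass to Fourier space and use the bootstrap hypothesis $f(z)\le K$ to replace every occurrence of $\hat G^\lambda_z$ by a multiple of $\hat C_{p(z)}$, reducing both estimates to convolution bounds involving only the \emph{simple random walk} Green's function at a subcritical point and the (smooth approximation of the) indicator $\1$. Since $\1$ itself is not summable, I would first prove the bounds with $\IR$ in place of $\1$, uniformly in $R$, and then let $R\to\infty$ using monotone convergence (note $\IR\nearrow\1$ pointwise and all summands are nonnegative). Throughout I would use that $\widehat{G^\lambda_z}$ is real (by the symmetry $x\mapsto -x$) and that $0\le \widehat{G^\lambda_z}(k)\le \widehat{G^\lambda_z}(0)=\hat C_{p(z)}(0)$, so that $f_2(z)\le K$ gives $|\widehat{G^\lambda_z}(k)|\le K\,\hat C_{p(z)}(k)$ pointwise, while $f_1(z)\le K$ controls the ``extra'' $2dz$ factors.

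\textbf{Proof of \eqref{eq:2}.} Write $\sB^\lambda_z=\sup_y (G^\lambda_z*G^\lambda_z*\1)(y)$. Using $\IR^*=\IR+\delta_0$ (Remark \ref{rem:1pos}) whose Fourier transform is positive, I would bound, for fixed $y$,
\[
(G^\lambda_z*G^\lambda_z*\IR)(y)\le (G^\lambda_z*G^\lambda_z*\IR^*)(y)=\int_{\Td}\frac{\dd^dk}{(2\pi)^d}\,e^{-ik\cdot y}\,\widehat{G^\lambda_z}(k)^2\,\widehat{\IR^*}(k)\le \int_{\Td}\frac{\dd^dk}{(2\pi)^d}\,\widehat{G^\lambda_z}(k)^2\,\widehat{\IR^*}(k),
\]
since the integrand is nonnegative; the same bound then holds after taking $\sup_y$. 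Now apply $|\widehat{G^\lambda_z}(k)|\le K\,\hat C_{p(z)}(k)$ and the explicit formula \eqref{eq:Fourierindicatori}. The point is that $\widehat{\IR^*}(k)=1+\sum_i \widehat{\IR^i}(k)=1+\frac{\sqrt{\pi R}}{d}\sum_i e^{-\pi^2 R k_i^2}$, and after the $k$-integration the Gaussian factor contributes a $1/\sqrt R$ that exactly cancels the $\sqrt R$, leaving a bound uniform in $R$. Concretely, $\int_{\Td}\hat C_{p(z)}(k)^2 e^{-\pi^2 R k_i^2}\,\dd^dk$ is, up to constants, $\frac1{\sqrt R}$ times a one-dimensional-reduced bubble integral in the remaining $d-1$ variables, which converges since $d-1>4$; hence the product with $\sqrt R/d$ is $O(1/d)$ uniformly in $R$. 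Taking $R\to\infty$ via monotone convergence gives $\sB^\lambda_z\le K^2\underline c/d$. Here I use $d>5$ precisely to make the reduced bubble (with one coordinate frozen near $0$) summable, and the standard high-$d$ estimate $\hat C_p\le \hat C_{1/2d}$ together with $\int (1-\hat D(k))^{-2}\,\dd^dk=O(1/d)$ to extract the factor $1/d$.

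\textbf{Proof of \eqref{eq:3}.} By definition $\sY^\lambda_{z,k}(x)=[1-\cos(k\cdot x)]G^\lambda_z(x)$, so $\widehat{\sY^\lambda_{z,k}}(\ell)=\widehat{G^\lambda_z}(\ell)-\tfrac12(\widehat{G^\lambda_z}(\ell-k)+\widehat{G^\lambda_z}(\ell+k))=-\tfrac12\Delta_k\widehat{G^\lambda_z}(\ell)$, which is exactly the object controlled by $f_3(z)\le K$: namely $\tfrac12|\Delta_k\widehat{G^\lambda_z}(\ell)|\le K\,U_{p(z)}(k,\ell)$. As before I bound $\|\sY^\lambda_{z,k}*\IR\|_\infty\le \|\sY^\lambda_{z,k}*\IR^*\|_\infty\le \int_{\Td}|\widehat{\sY^\lambda_{z,k}}(\ell)|\,\widehat{\IR^*}(\ell)\,\frac{\dd^d\ell}{(2\pi)^d}\le K\int_{\Td}U_{p(z)}(k,\ell)\,\widehat{\IR^*}(\ell)\,\frac{\dd^d\ell}{(2\pi)^d}$. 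Each of the three products of propagators in $U_{p(z)}(k,\ell)$ — after pulling out the prefactor $\hat C_{p(z)}(k)^{-1}$ — is, upon integrating $\ell$ against $\widehat{\IR^*}(\ell)$, a displaced bubble-type integral, and the same Gaussian-cancels-$\sqrt R$ mechanism as in \eqref{eq:2} shows it is $O(1)$ uniformly in $R$ (and in $d$, again using $d>5$). This produces $\|\sY^\lambda_{z,k}*\1\|_\infty\le 3\,\underline c\,K/\hat C_{p(z)}(k)$ after $R\to\infty$, with the factor $3$ from the three terms in $U_{p(z)}$.

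\textbf{Main obstacle.} The delicate point is the uniformity in $R$: one must verify that every integral of the form $\int \hat C_{p(z)}(\ell)\,\hat C_{p(z)}(\ell\pm k)\,e^{-\pi^2 R\ell_i^2}\,\dd^d\ell$, weighted by the $\sqrt R/d$ prefactor from $\widehat{\IR^i}$, stays bounded as $R\to\infty$ — i.e.\ that restricting one coordinate of a bubble to a vanishing neighbourhood of $0$ and multiplying by the inverse width still yields a finite, $d$-uniform quantity. This is exactly where the threshold shifts from $d>4$ (ordinary bubble) to $d>5$: the ``frozen-coordinate'' bubble is an integral over $d-1$ variables of a product of two simple-random-walk propagators, which is finite iff $d-1>4$. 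Handling the case $k$ near $0$ in \eqref{eq:3} (where $\hat C_{p(z)}(k)^{-1}$ blows up but $U_{p(z)}(k,\ell)$ degenerates in a compensating way) also requires a short argument, most cleanly done by comparing $U_{p(z)}(k,\ell)$ to $\hat C_{p(z)}(\ell)^2 + \hat C_{p(z)}(\ell)(1-\hat D(\ell\pm k))/(1-\hat D(k))$-type expressions and using $(1-\hat D(\ell\pm k))/(1-\hat D(k))\le \mathrm{const}\,(1+|\ell|^2/|k|^2)$ on the torus.
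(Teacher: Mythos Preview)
Your argument for \eqref{eq:3} is essentially the paper's: pass to Fourier space, use $\widehat{\sY^\lambda_{z,k}}(\ell)=-\tfrac12\Delta_k\hat G^\lambda_z(\ell)$, invoke $f_3(z)\le K$ to dominate by $K\,U_{p(z)}(k,\ell)$, and then reduce the three resulting $\hat C$-products, against $\hat\IR^*$, to $(d-1)$-dimensional bubble integrals via \eqref{eq:CdboundedCd-1}. That part is fine. (Minor correction: $\hat\IR^*=\sum_i\hat\IR^i$, with no additive ``$+1$''.)

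The genuine gap is in your proof of \eqref{eq:2}. You claim $\int_{\Td}(1-\hat D(k))^{-2}\,\frac{\dd^dk}{(2\pi)^d}=O(1/d)$, but this is the simple random walk bubble, which equals $1+O(1/d)$, not $O(1/d)$. If you trace your bound, after the symmetry step the $d$ equal summands in $\hat\IR^*=\tfrac{\sqrt{\pi R}}{d}\sum_i e^{-\pi^2Rk_i^2}$ cancel the explicit $1/d$, and the $\sqrt{\pi R}$ is absorbed by the $k_d$-integral; what remains is a $(d-1)$-dimensional bubble $\int(\hat C^{(d-1)})^2$, which is again $1+O(1/d)$. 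So your argument yields $\sB^\lambda_z\le K^2\cdot O(1)$, not $K^2\cdot O(1/d)$, and the statement of the lemma is not recovered.

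The paper repairs exactly this point by first writing $G^\lambda_z=\delta_0+H_z$ and using the one-step domination $H_z(x)\le 2dz\,(D*G^\lambda_z)(x)$ to insert two factors of $\hat D$ in the Fourier integral. One then bounds $D*D*G^\lambda_z*G^\lambda_z*\IR^*$ by $K^2\int\hat D(k)^2\hat C_{1/2d}(k)^2\hat\IR^*(k)\,\dd k$, and it is the extra $\hat D(k)^2$ that makes the reduced integral $O(1/d)$ (this is the standard fact that $\int\hat D^2(1-\hat D)^{-2}=O(1/d)$, as opposed to the full bubble). The cross terms arising from the expansion --- in particular $D*\IR^*$ and the bare $\IR$ --- are then handled by the trivial pointwise bound $\le 1/d$. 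Without this $D*D$ insertion (or an equivalent device) you cannot extract the factor $1/d$ required in \eqref{eq:2}.
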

 
In order to prove the lemma, we first prove a preliminary estimate. 
\begin{lemma}\label{lem:RWbubble}
Suppose $d>5$. There exists a constanct $c>0$ such that if $f(z)<K$ for some $K>0$, then for all $R>0$, 
\begin{equation} \label{eq:DDGG1}
	D * D * G_z* G_z   * \IR^* (x) \le c K^2/d. 
\end{equation}
\end{lemma}
\begin{proof}
Using the assumption that $f \leq K$ and that $ \IR^*$ has a positive Fourier transform (recall Remark \ref{rem:1pos}), we get
 \begin{align}
 \notag
D * D * G_z* G_z   * \IR^* (x) &  = 
\frac{1}{(2 \pi)^d} \int_{ [- \pi, \pi]^d  } \, \dd k  \, e^{  i k x  }  \hat{D}(k)^2 \hat{G}_z(k)^2  \hIR^*  (k)  
\\
\notag
& \leq 
\frac{1}{(2 \pi)^d} \int_{ [- \pi, \pi]^d  } \, \dd k \,   \hat{D}^2 (k) \hat{G}_z^2(k) \hIR^*  (k)    \\
 \notag
& \leq 
K^2 \frac{1}{(2 \pi)^d} \int_{ [- \pi, \pi]^d  } \, \dd k \, \hat{D}^2 (k) \hat{C}_{p(z)}^2(k) \hIR^* (k) \\
 \notag
& =
K^2 D * D * C_{p(z)} * C_{p(z)} * \IR^*(0)\\
 \notag
& \leq 
K^2 D * D * C_{\frac1{2d}} * C_{\frac1{2d}} * \IR^*(0)\\
\label{eq:lastIntDDGG1}
& = 
K^2 \frac{1}{(2 \pi)^d} \int_{ [- \pi, \pi]^d  } \, \dd k \, \hat{D}^2 (k) \hat{C}_{\frac{1}{2d}}^2(k) \hIR^* (k) .
\end{align}
By using that $\hat D(k)=\frac1d \sum_{i=1}^d \cos(k_i)$ and the fact that $\hat C_{\frac1{2d}}(k)=\frac{1}{1-\hat D(k)}$, first we can replace $\hIR^* (k)$ by  $ C_2\sqrt{R}  e^{ - \frac{R}{2} k_d^2  } $ (cf. Lemma \ref{lem:Fourierindicatori})
in the integral \eqref{eq:lastIntDDGG1} by symmetry then we use \eqref{eq:CdboundedCd-1} with $j=0$ and integrate with respect to the $d$-th coordinate, obtaining that 
 the previous expression is bounded from above by
\begin{multline*}
C_2\frac{K^2}{\sqrt{2\pi}(d-1)^2}  \int_{ [- \pi, \pi]^{d-1}  } \, \frac{\dd^{d-1} k}{(2 \pi)^{d-1}} \,\Bigg(\frac{1+\sum_{i=1}^{d-1}\cos(k_i)}{1-\frac{1}{d-1}\sum_{i=1}^{d-1}\cos(k_i)}\Bigg)^2 \\ \le 2 C_2
\frac{K^2}{\sqrt{2\pi}}\Bigg(\frac{1}{ (d-1)^2} \left\| \hat{C}_{\frac1{2(d-1)}}^{(d-1)}\right \|_2^2 +
\left\| \frac{\hat{D}^{(d-1)}}{1 - \hat{D}^{(d-1)}}    \right\|_2^2\Bigg),
\end{multline*}
where $\hat D^{(d-1)}$ and $\hat C^{(d-1)}$ denote the functions $\hat D$ and $\hat C$ in dimension $d-1$, that is, as function of $(k_1, \ldots, k_{d-1})$. 

Let us consider the second term and expand 
$ \Big[\frac{\hat{D}^{(d-1)}}{1 - \hat{D}^{(d-1)}}\Big]^2$ as a power series, 
\begin{align*}
\left\| \frac{\hat{D}^{(d-1)}}{1 - \hat{D}^{(d-1)}}    \right \|_2^2  
 &= \frac{1}{ (2 \pi)^{d-1}  }
\sum\limits_{n=1}^{\infty} 
 \int_{[-\pi, \pi]^{d-1}} \, n  \, {\big [ \hat{D}^{(d-1)}(k)  \big ]}^{n+1} \dd^{d-1}k
 \\ & \leq 
\frac{1}{ (2 \pi)^{d-1}  }
\sum\limits_{n=1}^{d-2} 
 \int_{[-\pi, \pi]^{d-1}} \, n  \, {\big [ \hat{D}^{(d-1)}(k)  \big ]}^{n+1} \dd^{d-1}k \, \, + \, \, \sum\limits_{n=d-1}^{\infty}
 n \Big (  \frac{ \pi (d-1)}{ 4 (n+1)} \Big )^{\frac{d-1}{2}},
\end{align*}
where for the last step we used (A.27) in \cite{MadraSlade93}. Then, by reasoning as in (A.30) and below in \cite{MadraSlade93}, if $d>5$, the second term 
is $O(1/d)$ as $d\to+\infty$ (in fact, it converges even exponentially). 
For the first term we observe that 
\begin{multline*}
\sum\limits_{n=1}^{d-2} 
 \int_{[-\pi, \pi]^{d-1}}  \frac{   \dd^{d-1}k}{ (2 \pi)^{d-1}  } \, n  \, {\big [ \hat{D}^{(d-1)}(k)  \big ]}^{n+1}
\\  \leq 
3
 \int_{[-\pi, \pi]^{d-1}}  \frac{   \dd^{d-1}k}{ (2 \pi)^{d-1}  }\, {\big [ \hat{D}^{(d-1)}(k)  \big ]}^{2}
 \, + \, 5
 \int_{[-\pi, \pi]^{d-1}}   \frac{   \dd^{d-1}k}{ (2 \pi)^{d-1}  }\, {\big [ \hat{D}^{(d-1)}(k)  \big ]}^{4}
 \\+ 
  (d-1)^2
 \int_{[-\pi, \pi]^{d-1}}    \frac{   \dd^{d-1}k}{ (2 \pi)^{d-1}  }\, {\big [ \hat{D}^{(d-1)}(k)  \big ]}^{6}.
 \end{multline*}
As observed below (A.31) in \cite{MadraSlade93}, this can be further estimated from above by observing that the integrals on the right-hand side are respectively the probabilities that simple random walk in $d-1$ dimension returns to
the origin after two, four, or six steps. These probabilities are respectively
of the order ${(d-1)}^{-1}$, ${(d-1)}^{-2}$, ${(d-1)}^{-3}$, therefore all of such terms are of order ${(d-1)}^{-1}$.
Analogously, we have $\left\| \hat{C}_{\frac1{2(d-1)}}^{(d-1)}\right\|_2^2\le 1+\frac{c}{d}$.
\end{proof}

We now use this estimate to prove Lemma \ref{lem:equi510}
\begin{proof}[Proof of Lemma \ref{lem:equi510}.]
To simplify the notation, we drop $\lambda$ from the notation throughout the proof.

We start with the proof of \eqref{eq:2}.
Let $x \in \mathbb{Z}^d$ be an arbitrary point, observe that by monotone convergence
$G_z  *  G_z  * \1   (x) = \lim_{R \rightarrow \infty} G_z  *  G_z  *  \IR (x)$
and fix an arbitrary $R \in (0, \infty)$.
Below we use the inequality 
\begin{align}\label{eq:useineq}
H_z(x) := G_z(x) - \delta_0(x) &\leq 2 d z D * G_z(x)
= 2 d z D * H_z(x)+ 2 d z D (x), \qquad x \in \mathbb{Z}^d.
\end{align}
The inequality arises from ignoring the interaction of the first step of the walk and  the remainder. 

For any fixed $R>0$, \eqref{eq:useineq} gives
 \begin{align*}
 \,G_z  *  G_z  * \IR (x) 
& =  ( H_z + \delta_0)   *  ( H_z + \delta_0)   * \IR (x) \\
& \le (2 d z)^2  D * D * G_z* G_z   * \IR^* (x)  \\ &\hspace{4cm}+  4 d z D * H_z   *  \IR^*(x)  +4 d z D   *  \IR^* (x) + \IR (x) .
\end{align*}
Then, by using \eqref{eq:useineq} again and the fact that $\IR (x)\le \frac1d$, we obtain that the previous equation is smaller than 
\begin{multline*}
 (2 d z)^2  D * D * G_z* G_z   * \IR^* (x) +  (4 d z)^2 D *  D * G_z   *  \IR^*(x)  +4 d z D   *  \IR^* (x)+ \frac1d \\
 \leq  5  (2 d z)^2  D * D * G_z* G_z   * \IR^* (x)   + 4 d z D   *  \IR^* (x) + \frac1d.
\end{multline*}
The first term is bounded by \eqref{eq:DDGG1}. 
For the second term we observe that
from the definition of  $\IR^* (x)$ it follows that  $D   *  \IR^* (x) \leq \frac{1}{d}$ for every $x \in \mathbb{Z}^d$.  
Combining the two bounds we obtain \eqref{eq:2} since $z \in (0, z_c)$
and since $z_c = \frac{1}{\mu} \leq \frac{1}{d-1}$ (as it follows from the comparison between the prudent walk and the directed walk) .

\smallskip
We now move to the proof of  \eqref{eq:3}. 
To begin, note that, for any $x \in \mathbb{Z}^d$,
\begin{align}
\sY^\lambda_{z,k} * \IR (x) = \sY^\lambda_{z,k} * \IR^* (x),
\end{align}
and that for any $L^2$ function $g$ 
$$
\sum_{x\in \mathbb Z^d}  \cos(k \cdot x) g(x) e^{i x \ell}
=\frac 12 \big (\hat g(k+\ell) +\hat g(k-\ell) \big ).
$$
Therefore, using  the general fact that $\|g\|_{\infty} \leq \| \hat{ g }  \|_1=\int_{[-\pi,\pi]^d}\frac{\dd k}{(2\pi)^d}|\hat g(k)|$ with $g= \sY^\lambda_{z,k}*\1^0$, we obtain,
\begin{equation}\label{eq:somestep}
\begin{split}
\|    \sY^\lambda_{z,k} *  \IR^*  \|_{\infty}  \leq 
\|    \hat{ \sY}_{z,k} \cdot \hIR^*  \|_{1}
&\le 
\left\|\Big( \hat{ G_z } - \frac{1}{2} \big (
\hat{ G_z  }(\cdot+k) + \hat{ G_z}(\cdot-k)  \big ) \Big)
\, \hIR^* \right\|_{1} \\ &\le
\frac12 \left\|   \Delta_k   (\hat G_z) \, \,    \hIR^* \right\|_{1},
\end{split}
\end{equation}
where the value of $k$ remains fixed in the $L^1$  norm in the right-hand side of the second inequality.
Using the bootstrap assumption $f_3(z)\le K$ and again Remark \ref{rem:1pos} we now obtain that
\begin{multline}\label{eq:threeterms}
\frac12 \left\|    \Delta_k   (\hat G_z) \, \,    \hIR^* \right\|_{1} \leq K \frac{1}{\hat{C}_{p(z)}(k)} 
\Bigg\{ \int_{[-\pi, \pi]^d } \frac{\dd \ell}{(2\pi)^d} \, \hat{C}_{\frac{1}{2d}}(\ell-k) \, \hat{C}_{\frac{1}{2d}}(\ell) \, \hIR^* (\ell) \, \, 
\\  + \, \, 
 \int_{[-\pi, \pi]^d } \frac{\dd \ell}{(2\pi)^d} \, \hat{C}_{\frac{1}{2d}}(\ell+k) \, \hat{C}_{\frac{1}{2d}}(\ell) \, \hIR^*  (\ell)\,\\
  + \, \, 
  \int_{[-\pi, \pi]^d } \frac{\dd \ell}{(2\pi)^d} \,  \hat{C}_{\frac{1}{2d}}(\ell-k) \,\hat{C}_{\frac{1}{2d}}(k+\ell) \, \hIR^* (\ell) \Bigg \},
  \end{multline}
where we have used the definition of $U(k,\ell)$
obtaining the sum of three terms. 
We use the following upper bound for $j \in \{-1, 0, 1\}$,
\begin{equation}\label{eq:CdboundedCd-1}
\begin{split}
\hat{C}_{\frac{1}{2d}}(\ell + j k) &  =  \frac{1}{  1 - \frac{1}{d} \sum\limits_{i=1}^{d} \cos (\ell_i + j k_i)    }
\\ & \leq  \frac{1}{  1 - \frac{1}{d} \sum\limits_{i=1}^{d-1} \cos (\ell_i + j k_i)   - \frac{1}{d}  }  \\ & =
 \frac{d}{ d -1}   \frac{1}{  1 - \frac{1}{d-1} \sum\limits_{i=1}^{d-1} \cos (\ell_i + j k_i)    } =: 
 \frac{d}{ d -1}   \hat{C}^{ (d-1)  }_{\frac{1}{2(d-1)}}( \tilde  \ell + j \tilde k),
\end{split}
\end{equation}
 where $\tilde \ell$ is the restriction of $\ell$ to the first $d-1$ coordinates. 
Therefore, we first replace  $\hIR^* (\ell)$ by 
 $ C_2\sqrt{R}  e^{ -  \frac{R}{2}\ell_d^2  } $ (cf. Lemma \ref{lem:Fourierindicatori})
in the integrals  (\ref{eq:threeterms}) by symmetry, then we use \eqref{eq:CdboundedCd-1} and  integrate with respect to the $d$-th coordinate, obtaining that  (\ref{eq:threeterms}) is bounded from above by $$C_2\sqrt{\frac{2}{ \pi }} \,  \Big  ( \frac{d}{d-1} \Big )^3 \frac{1}{ \hat{C}_{p(z)}(k)  } ({J}_{1,-1}(\tilde k)+{J}_{0,-1}(\tilde k)+{J}_{0,1}(\tilde k)),$$ 
where 
$$
 {J}_{j, j^\prime}(\tilde k) = \int_{ [-\pi, \pi]^{d-1}  } \frac{  \dd^{d-1} \ell }{ (2 \pi)^{d-1}} \frac{1}{\big ( 1  - \hat{D}(\ell + j\tilde k) \big )   \big ( 1  - \hat{D}(\ell + j^\prime \tilde k) \big ) }.
$$
By the Cauchy-Schwarz inequality we have that $ {J}_{j, j^\prime}(\tilde k)\le  {J}_{0,0}$. Note that ${J}_{0,0}$ is independent of $\tilde k$ and if $d>5$ it is bounded by $1+O(1/d)$, see (5.2) and (5.10) in \cite{SladeBook}.
\end{proof}
 
The next lemma is a consequence of our diagrammatic bounds and of the previous lemma:

\begin{lemma}\label{lem:equi511}
Fix $z \in (0, z_c)$.
There exists  
 $\lambda_0\in (0,1)$ sufficiently small, $d_0\in \mathbb N$ sufficiently large and a positive and finite constant $\tilde{c}$ independent from $z$, $\lambda$, and $d$, such that for any $K \in (0, 4]$, if $\lambda = 1$ and $d > d_0$ or  if $d > 5$ and $\lambda < \lambda_0$ we then have,
 \begin{equation}\label{eq:boundPiNSUmZ}
 \sum\limits_{z \in \mathbb{Z}^d} | \Pi^\lambda_z(x)| \leq  \tilde{c} \, \,  \frac{\lambda}{d},
 \end{equation}
  \begin{equation}\label{eq:boundPiNSUmZcos}
 \sum\limits_{z \in \mathbb{Z}^d}
 \big[1 - \cos (k \cdot x)\big]
 | \Pi^\lambda_z(x)| \leq   \tilde{c} \,  \,  \frac{1}{\hat C_{p(z)}(k)} \, \, \frac{1}{ (1 - \frac{\lambda K^2}{d})^3  }
 \end{equation}
 \end{lemma}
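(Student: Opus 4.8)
The plan is to combine the diagrammatic bounds of Proposition~\ref{thm:equi41} with the bubble-diagram and $\sY$-estimates from Lemma~\ref{lem:equi510}, and then to sum the resulting geometric-type series over $N$. First I would observe that under the hypotheses ($\lambda=1$, $d>d_0$, or $d>5$ and $\lambda<\lambda_0$) we have, by \eqref{eq:2} in Lemma~\ref{lem:equi510}, the bound $\sB_z^\lambda \le K^2 \underline c/d$; choosing $d_0$ large (resp.\ $\lambda_0$ small) makes $dz\lambda\,\sB_z^\lambda \le 2d\cdot z_c \cdot \lambda \cdot K^2\underline c/d = 2z_c\lambda K^2\underline c$ strictly less than $1$, using $z_c=1/\mu\le 1/(d-1)$ and $K\le 4$. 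This is the key smallness input: it guarantees that the series $\sum_N (dz\lambda\,\sB_z^\lambda)^N$ and the related series with polynomial-in-$N$ prefactors converge.

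For \eqref{eq:boundPiNSUmZ}, I would start from $\sum_{x}|\Pi_z^\lambda(x)| \le \sum_{N\ge1}\sum_x \Pi_z^{(N)}(x) \le \sum_{N\ge1}(dz\lambda)^N(\sB_z^\lambda)^N$ by \eqref{eq:bounds2}. Pulling out the $N=1$ term gives $dz\lambda\,\sB_z^\lambda \le dz\lambda\cdot K^2\underline c/d = z\lambda K^2\underline c \le \tilde c\,\lambda/d$ once we absorb the remaining geometric tail $\sum_{N\ge2}$ (which is bounded by a constant multiple of $(dz\lambda\sB_z^\lambda)^2$, hence is itself $O(\lambda^2/d^2)$ and so dominated), using again $z\le z_c\le 1/(d-1)$. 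Thus a single constant $\tilde c$, independent of $z,\lambda,d$, works.

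For \eqref{eq:boundPiNSUmZcos}, I would use \eqref{eq:bounds2b}: summing over $N$,
\[
\sum_x [1-\cos(k\cdot x)]|\Pi_z^\lambda(x)| \le \sum_{N\ge1}(dz\lambda)^N(3N-1)(2N-1)\|\sY_{z,k}^\lambda*\1\|_\infty(\sB_z^\lambda)^{N-1} + \sum_{N\ge1}(dz\lambda)^N(3N-1)N(\sB_z^\lambda)^N(1-\hat D(k)).
\]
Into the first sum I insert $\|\sY_{z,k}^\lambda*\1\|_\infty \le 3\underline c K/\hat C_{p(z)}(k)$ from \eqref{eq:3}; the remaining factor is $dz\lambda\sum_{N\ge1}(3N-1)(2N-1)(dz\lambda\sB_z^\lambda)^{N-1}$, and since $\sum_{N\ge1}(3N-1)(2N-1)t^{N-1} = \frac{d^2}{dt^2}\big(\text{stuff}\big)$ evaluates to a rational function bounded by $C/(1-t)^3$ with $t=dz\lambda\sB_z^\lambda \le \lambda K^2\underline c z_c d/(d) \le \lambda K^2/d$ (after adjusting constants), this contributes the claimed $\tilde c\,\frac{1}{\hat C_{p(z)}(k)}\,\frac{1}{(1-\lambda K^2/d)^3}$. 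For the second sum I use the elementary bound $1-\hat D(k) \le \mathrm{const}/\hat C_{p(z)}(k)$: indeed $\hat C_{p(z)}(k)^{-1} = 1-2dp(z)\hat D(k) \ge 1-\hat D(k)$ for $p(z)\le\frac1{2d}$, so $(1-\hat D(k)) \le \hat C_{p(z)}(k)^{-1}$, which folds this term into the same form with an even smaller coefficient. Collecting, \eqref{eq:boundPiNSUmZcos} follows with a single constant $\tilde c$.

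The main obstacle is bookkeeping the dependence of all constants on $z,\lambda,d$ so that a single $\tilde c$ truly works uniformly, and in particular verifying that $t=dz\lambda\sB_z^\lambda$ can be made $\le \lambda K^2/d$ (not merely $\le 1$) after fixing $d_0$ and $\lambda_0$ — this requires using $z<z_c\le 1/(d-1)$ together with $\sB_z^\lambda\le K^2\underline c/d$ and choosing the thresholds so the product is controlled; one should be careful that the exponent $3$ in $(1-\lambda K^2/d)^{-3}$ matches exactly the $(3N-1)(2N-1)$ prefactor's growth, which it does since that prefactor is quadratic in $N$. The summation identities $\sum_{N\ge1}N t^{N-1}=(1-t)^{-2}$ and $\sum_{N\ge1}N^2 t^{N-1}=(1+t)(1-t)^{-3}$ are the only analytic facts needed beyond the two previous lemmas.
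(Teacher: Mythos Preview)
Your proposal is correct and follows essentially the same route as the paper: apply the diagrammatic bounds \eqref{eq:bounds2}--\eqref{eq:bounds2b}, insert the estimates \eqref{eq:2}--\eqref{eq:3} from Lemma~\ref{lem:equi510}, use $z<z_c\le 1/(d-1)$ to control $dz$, convert $1-\hat D(k)$ into $1/\hat C_{p(z)}(k)$, and sum the resulting series in $N$ with quadratic-in-$N$ prefactors to produce the $(1-\lambda K^2/d)^{-3}$ factor. One small caveat: your justification that $\hat C_{p(z)}(k)^{-1}=1-2dp(z)\hat D(k)\ge 1-\hat D(k)$ fails when $\hat D(k)<0$, but in that regime $1-\hat D(k)\le 2$ while $\hat C_{p(z)}(k)^{-1}\ge 1$, so the bound $1-\hat D(k)\le 2/\hat C_{p(z)}(k)$ still holds and the argument goes through with an adjusted constant (the paper's proof has the same minor oversight).
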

 \begin{proof}
From an immediate application of Theorem \ref{thm:equi41} and of  Lemma \ref{lem:equi510} we obtain that, for any integer $N \geq 1$,
$$
\sum\limits_{x \in \mathbb{Z}^d}  \Pi_z^{(N)}(x) \leq 
  \, (2 d z \lambda )^N \, (\sB_z^\lambda)^{N} \, \leq \,
\Big ( \frac{2 d z \lambda  K^2 \underline{c}}{d} \Big )^{N}.
$$
 Now by observing that, 
 \begin{equation}\label{eq:relationzd}
 2 d z \leq \frac{2d}{d-1} \leq 3,
 \end{equation}
 for each $z \in [0, z_c)$ and $d > 5$ and by summing over all $N \geq 1$ we deduce 
  \eqref{eq:boundPiNSUmZ}.
We now prove \eqref{eq:boundPiNSUmZcos}. Note that we obtain from Theorem \ref{thm:equi41}, 
 Lemma \ref{lem:equi510} 
 and from the fact that for any $z \in [0, z_c)$,
 $\hat{C}_{p(z)}(k) \leq \hat{C}_{\frac{1}{2d}}(k) = \frac{1}{1 - \hat{D}(k)}$, that, for  any integer $N \geq 1$,
 \begin{align}
 \notag
 \sum\limits_{x \in \mathbb{Z}^d} [ 1 - \cos(k \cdot x) ] \Pi_z^{(N)}(x)  & \leq 6 N^2 \lambda^N  (dz)^N
 \Big ( (\sB_z^\lambda)^{N-1} \|  \sY^\lambda_{z,k} * \1  \|_\infty
 + (\sB_z^\lambda)^{N} (1 - \hat{D}(k) \Big) 
 \notag  \\
 & \leq  \notag  \frac{1}{\hat{C}_{p(z)}}
 18 N^2 (dz)^N \, \, \lambda \, \, \Big  (\frac{ \lambda \,  K^2 \underline{c}}{d} \Big)^{N-1} 
 \Big ( K + \frac{K^2}{d}       \Big )
 \end{align}
 Now, by summing over all $N \geq 1$,
  using again \eqref{eq:relationzd} 
 and our assumptions on $\lambda$, $d$ and $K$,
  we obtain \eqref{eq:boundPiNSUmZcos} as desired. 
\end{proof}

\subsection{Bootstrap argument completed}
We now complete the bootstrap argument which will allow us to prove Theorem  \ref{thm:BubbleCond}. Contrary to the previous sections, this  part of the argument is quite standard and not  model dependent once 
 one is in possession of Lemma \ref{lem:equi511}.
 Hence we only present the main steps and refer to  \cite[Lemma 5.16]{SladeBook}.
\begin{lemma}\label{lem:neededforbootstrap}
There exists $d_0 \in \mathbb{N}$ large enough and $\lambda_0 \in (0, 1)$ small enough such that the following holds. 
Fix $z \in (0, z_c)$ and suppose that $f(z) \leq 4$.
Then, if $d \geq d_0$ and $\lambda \in (0, \lambda_0)$ 
or if $\lambda=1$ and $d > d_0$,
it is in fact the case that 
$f(z) \leq  1 +  c^* \frac{\lambda}{d} < 4$,
for some $c^*$ which is independent of $z$, $\lambda$ and $d$.
\end{lemma}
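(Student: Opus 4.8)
The plan is to improve each of the three bounds $f_1(z)\le 1+c^*\lambda/d$, $f_2(z)\le 1+c^*\lambda/d$, $f_3(z)\le 1+c^*\lambda/d$ separately, starting from the lace expansion identity \eqref{eq:decompositionGx} and its Fourier transform, and using Lemma \ref{lem:equi511} to control the $\Pi$-coefficients under the bootstrap hypothesis $f(z)\le 4$. First I would note that $f_1(z)=2dz\le 2d/(d-1)$ since $z<z_c\le 1/(d-1)$, and this is $\le 1+C/d$; so $f_1$ needs no work beyond \eqref{eq:relationzd}. For $f_2$, taking Fourier transforms in \eqref{eq:decompositionGx} yields the standard identity
\begin{equation*}
\hat G_z(k)=\frac{1}{1-2dz\hat D(k)-\hat\Pi^\lambda_z(k)},
\end{equation*}
and I would rewrite this as $\hat G_z(k)=\hat C_{p(z)}(k)\cdot\frac{1-\hat C_{p(z)}(k)^{-1}}{\,\cdots\,}$ — more precisely, compare the denominator above with $\hat C_{p(z)}(k)^{-1}=1-2dp(z)\hat D(k)$, using that $p(z)$ was chosen so that $\hat G_z(0)=\hat C_{p(z)}(0)$, equivalently $2dp(z)=2dz+\hat\Pi^\lambda_z(0)$. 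Substituting, the denominator of $\hat G_z(k)$ becomes $\hat C_{p(z)}(k)^{-1}+(2dp(z)-2dz)(1-\hat D(k))+(\hat\Pi^\lambda_z(0)-\hat\Pi^\lambda_z(k))$, and since $2dp(z)-2dz=\hat\Pi^\lambda_z(0)$, both correction terms are controlled by $\sum_x[1-\cos(k\cdot x)]|\Pi^\lambda_z(x)|\le \tilde c\,(1-\hat D(k))\cdot(\text{bounded})$ via \eqref{eq:boundPiNSUmZcos} together with $\hat C_{p(z)}(k)^{-1}\ge$ const$\cdot(1-\hat D(k))$ — wait, that last inequality needs $p(z)$ bounded away from $0$, which fails; instead one uses $1-\hat D(k)\le \hat C_{p(z)}(k)^{-1}$ directly since $p(z)\le 1/(2d)$ gives $2dp(z)\le1$, hence $\hat C_{p(z)}(k)^{-1}=1-2dp(z)\hat D(k)\ge 1-\hat D(k)$. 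This shows the denominator of $\hat G_z(k)$ is $\hat C_{p(z)}(k)^{-1}(1+O(\lambda/d))$, i.e.\ $f_2(z)\le 1+c^*\lambda/d$.

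For $f_3(z)$, the quantity $\tfrac12\Delta_k\hat G_z(\ell)$ is estimated by writing $\hat G_z=1/\hat J_z$ with $\hat J_z(\ell)=1-2dz\hat D(\ell)-\hat\Pi^\lambda_z(\ell)$ and applying the second-difference identity
\begin{equation*}
-\tfrac12\Delta_k\hat G_z(\ell)=\hat G_z(\ell)\Big(-\tfrac12\Delta_k\hat J_z(\ell)\Big)\hat G_z(\ell)-\hat G_z(\ell)\,\hat G_z(\ell-k)\,\hat G_z(\ell+k)\,\big(\tfrac12(\hat J_z(\ell+k)-\hat J_z(\ell))\big)\big(\tfrac12(\hat J_z(\ell-k)-\hat J_z(\ell))\big)\cdot(\cdots),
\end{equation*}
or more cleanly the standard expansion of $\Delta_k(1/\hat J_z)$ into terms each of which is a product of two or three factors of $\hat G_z$ at shifted arguments times first or second differences of $\hat J_z$. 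The differences of $\hat J_z$ split into a simple-random-walk part, $-\tfrac12\Delta_k(2dz\hat D)=2dz\,\tfrac12\Delta_k(-\hat D)$, and a $\Pi$-part bounded by \eqref{eq:boundPiNSUmZcos} (for first differences) and a similar second-difference bound (which follows from applying \eqref{eq:inequalitycos}-type estimates to the diagrammatic bounds, or can be cited from the analogous SAW computation). Using $f_2(z)\le 4$ from the bootstrap to replace each $|\hat G_z|$ by $4\hat C_{p(z)}$, and collecting terms, one recognises the combination $\hat C_{p(z)}(\ell-k)\hat C_{p(z)}(\ell)+\hat C_{p(z)}(k+\ell)\hat C_{p(z)}(\ell)+\hat C_{p(z)}(\ell-k)\hat C_{p(z)}(k+\ell)$ multiplied by $\hat C_{p(z)}(k)^{-1}$, which is exactly $U_{p(z)}(k,\ell)$; the prefactor is $2dz+O(\lambda/d)\cdot(\text{constant})$, and since $2dz\le 1+C/d$ we conclude $f_3(z)\le 1+c^*\lambda/d$.

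The main obstacle I anticipate is the $f_3$ step: carefully organising the second-difference expansion of $1/\hat J_z$ so that the bare simple-random-walk contribution produces precisely the factor $U_{p(z)}(k,\ell)$ (and not something larger), while all $\Pi$-generated terms are genuinely of order $\lambda/d$ relative to that — in particular one must handle the cross terms that mix a $\hat D$-difference in one slot with a $\hat\Pi$-difference in another, and verify that the estimate $\tfrac12|\Delta_k\hat G_z(\ell)|\lesssim (\text{const})\,U_{p(z)}(k,\ell)$ holds uniformly in $k,\ell$ including the delicate regime $\ell$ or $\ell\pm k$ near $0$. This is exactly the point where the proof parallels \cite[Lemma 5.16]{SladeBook} and its precursors, and since the only model-dependent inputs are \eqref{eq:boundPiNSUmZ}--\eqref{eq:boundPiNSUmZcos} — which we have already established — the remainder is a now-standard (if technically involved) computation, so I would present the $f_1$ and $f_2$ bounds in detail and then invoke the cited reference for the $f_3$ bound, indicating the necessary substitutions.
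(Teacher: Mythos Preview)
Your overall strategy matches the paper's (which in turn largely defers to \cite[Lemma 5.16]{SladeBook} with $\beta$ replaced by $\lambda/d$), and your treatment of $f_2$ and $f_3$ is essentially correct in outline. There is, however, a genuine gap in your $f_1$ step.

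\medskip
\noindent\textbf{The $f_1$ bound.} You argue $f_1(z)=2dz\le 2d/(d-1)\le 1+C/d$ from $z<z_c\le 1/(d-1)$. This yields a bound of the form $1+C/d$ with $C$ \emph{independent of} $\lambda$, which is \emph{not} the claimed $1+c^*\lambda/d$. In the regime $d>5$ fixed and $\lambda$ small, your bound stays at (say) $12/5$ while the lemma asks for something that can be driven to $1$ by taking $\lambda\to 0$. The paper instead uses the lace expansion identity at $k=0$: since $\chi(z)=\hat G_z(0)>0$, one has
\[
\chi(z)^{-1}=1-2dz-\hat\Pi_z(0)>0,\qquad\text{hence}\qquad 2dz<1-\hat\Pi_z(0)\le 1+|\hat\Pi_z(0)|\le 1+\tilde c\,\frac{\lambda}{d}
\]
by \eqref{eq:boundPiNSUmZ}. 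This is the step that produces the $\lambda$-dependence, and it is also what you need to feed into the $f_3$ estimate: your own conclusion ``the prefactor is $2dz+O(\lambda/d)$'' only yields $f_3\le 1+c^*\lambda/d$ once you know $2dz\le 1+c^*\lambda/d$.

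\medskip
\noindent\textbf{Minor issues in $f_2$.} Your displayed identity for the denominator has a sign error: one gets
\[
\hat G_z(k)^{-1}=\hat C_{p(z)}(k)^{-1}-\hat\Pi_z(0)\,(1-\hat D(k))+(\hat\Pi_z(0)-\hat\Pi_z(k)),
\]
not $+\hat\Pi_z(0)(1-\hat D(k))$; this is harmless for the absolute-value bound. More substantively, the inequality $1-\hat D(k)\le \hat C_{p(z)}(k)^{-1}$ that you invoke fails when $\hat D(k)<0$, since $\hat C_{p(z)}(k)^{-1}-(1-\hat D(k))=\hat D(k)(1-2dp(z))$ has the sign of $\hat D(k)$. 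The paper sidesteps this by bounding both numerator and denominator of the correction in \eqref{eq:boundforf2} against the nonnegative quantity $\hat F_z(0)+(1-\hat D(k))$ rather than against $\hat C_{p(z)}(k)^{-1}$ directly; this is the cleaner route and is what \cite[(5.54)--(5.60)]{SladeBook} does.
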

\begin{proof}
The proof of this lemma is identical to the proof of 
 \cite[Lemma 5.16]{SladeBook}. The  
only difference is that
the term $\beta$ in  \cite[Lemma 5.16]{SladeBook}
has to be replaced by 
$\frac{\lambda}{d}$ everywhere. 
We sketch the main steps. 
In the proof  $c$ is some positive constant  which does not depend on $\lambda$, $z$ and $d$ and which may differ from  line to line. 
For $f_1(z)$ note that $\chi(z) > 0$ (cf. \eqref{eq:susceptibility}) so that, by using (\ref{eq:decompositionGx}) and $\hat{G}(0) = \chi(z)$,
$$
\chi(z)^{-1} = 1 - 2 d z - \hat{\Pi}_z(0) > 0.
$$
Therefore, by Lemma \ref{lem:equi511},
we deduce that, under the assumptions of the lemma,
$$
f_1(z) = 2d z < 1 - \hat{\Pi}_z(0) \leq 1 + \tilde{c} \,  \frac{\lambda}{d},
$$
as desired.

For $f_2(z)$, we first define $\hat{F}_z(k) : = 1 / \hat{G}_z^\lambda(k)$ so that 
\begin{equation}\label{eq:boundforf2}
\frac{\hat{G}^\lambda_z(k)}{\hat{C}_{p(z)}(k)} =
\frac{1 - 2 \, d \,  p(z) \,  \hat{D}(k)   }{ \hat{F}^\lambda_z(k)}
=1 + \frac{ 1 - 2 \, d \, p(z) - \hat{F}^\lambda_z(k)  }{\hat{F}^\lambda_z(k)}.
\end{equation}
The goal is to show that the second term in the right-hand side of (\ref{eq:boundforf2})
is bounded from above by a constant times $\frac{\lambda}{d}$.
The first step is to obtain an upper bound on the  numerator such term.  Following the same steps as \cite[(5.54)-(5.57)]{SladeBook}
and applying our Lemma \ref{lem:equi511},
we obtain that the numerator of the second term in (\ref{eq:boundforf2}) is bounded from above by a constant times,
$$
\frac{\lambda}{d} \, \big [  \hat{F}^\lambda_z(0) + \big (1 - \hat{D}(k) \big )  \big ].
$$
 Following the same steps as \cite[(5.58)-(5.60)]{SladeBook}
 and using again  Lemma \ref{lem:equi511},
 we obtain that the denominator of the second term in the right-hand side of 
 (\ref{eq:boundforf2}) is bounded from below by,
 $$ 
 \Big ( \frac{1}{2} - c \frac{\lambda}{d} \big )  \Big [ \hat{F}^\lambda_z(0) + (1 - \hat{D}(k)) \Big ],
 $$
 for some constant $c>0$ which does not depend on $\lambda$, $z$ and $d$.
 The combination of the two bounds gives that,
 $
 f_2(z) \leq  1 + c \frac{\lambda}{d}.
 $

 We now move to $f_3(z)$.
 We define $\hat{g}_z(k) =   2 d z  \hat{D}(k)  + \hat{\Pi}_z(k)$ such that 
 $$
 \hat{G}_z^\lambda(k) = \frac{1}{1 - \hat{g}_z(k)}.
 $$
 Since also in our case $g_z(x) = g_z(-x)$,  we can follow the same steps as 
 \cite[(5.61)-(5.64)]{SladeBook} and obtain, using our upper bounds on $f_1(z)$ and $f_2(z)$, that $f_3(z) \leq c \,  \frac{\lambda}{d}$, thus concluding the proof. 
\end{proof}
We are now ready to prove that the critical prudent bubble diagram is finite under appropriate assumptions on $d$ and $\lambda$.
\begin{proof}[Proof of Theorem  \ref{thm:BubbleCond}]
Let $d_0 \in \mathbb{N}$ and $\lambda_0 \in (0,1)$ be as in Lemma \ref{lem:neededforbootstrap}, suppose that $\lambda = 1$ 
and that $d > d_0$ or that $d > 5$ and $\lambda \in (0, \lambda_0)$.
First, note that  $f$ is continuous in $[0, z_c]$ and that $f(0) =1$ (the proof of these claims is simple and is completely analogous to that of Lemma 5.13 and 5.14 in \cite{SladeBook}).
Choose then $z$ small enough so that  $f(z) \leq 4$.
Then,  from Lemma \ref{lem:neededforbootstrap},
from the continuity of $f$ and from the simple lemma \cite[Lemma 5.9]{SladeBook} 
we deduce that,
$$
\forall z \in [0, z_c)  \quad \quad f(z) \leq 1 + c^* \frac{\lambda}{d} < 4.
$$
From this bound and by using  \eqref{eq:2} with $K =1 + c^* \frac{\lambda}{d} < 4$
we then deduce that for each $x \in \mathbb{Z}^d$, 
\begin{equation*}
 \forall z \in [0, z_c)  \quad \quad 
G^\lambda_z* G^\lambda_z* \1(x) \leq  \sB^\lambda_z \leq (1 + c^* \frac{\lambda}{d})^2 \, \,  \frac{ \underline{c} }{d} \leq   \frac{16 \,  \underline{c} }{d}.
 \end{equation*}
 Hence,  by the Abel's limit theorem,  for each $x \in \mathbb{Z}^d$, 
$$
G^\lambda_{z_c}* G^\lambda_{z_c}* \1(x) 
= \lim\limits_{z \rightarrow z_c} 
G^\lambda_{z}* G^\lambda_{z}* \1(x)  
\leq \frac{16 \, \underline{c}}{d}.
$$
Taking the sup over all $x \in \mathbb{Z}^d$ we thus obtain that
$$
 \sB^\lambda_{z_c} \leq  \frac{16 \, \underline{c}}{d}.
$$
This concludes the proof.
\end{proof}

We now state some further consequences of the completed bootstrap argument, which we need in the sequel. 

\begin{corollary}
Under the conditions of Theorem \ref{thm:BubbleCond}, for $k\in[-\pi,\pi)^d$, 
\begin{eqnarray}
	\| \sY^\lambda_{z_c,k} *  \1 \|_{\infty}
	&\le& O\big(|k|^2\big),\label{eq:GdisplBd}\\
	\| \sY^\lambda_{z_c,k} *  G^\lambda_{z_c} * \1 \|_{\infty}
	&\le& O\big(|k|^{(d-5)\wedge2}\big).\label{eq:GGdisplBd}
\end{eqnarray}
\end{corollary}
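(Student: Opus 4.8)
The plan is to obtain both estimates from the already‑completed bootstrap of Theorem~\ref{thm:BubbleCond} together with Lemma~\ref{lem:equi510}, by pushing the Fourier bounds through the limits $z\uparrow z_c$ and $R\uparrow\infty$. The proof of Theorem~\ref{thm:BubbleCond} has shown $f(z)\le K_0:=1+c^*\lambda/d<4$ for all $z\in[0,z_c)$, so $f_2(z),f_3(z)\le K_0$ there; moreover $\chi^\lambda(z)\uparrow\infty$ as $z\uparrow z_c$ (a standard consequence of the lace expansion \eqref{eq:decompositionGx} and the bound \eqref{eq:boundPiNSUmZ}, cf.\ \cite{SladeBook}), hence $2d\,p(z)=1-\chi^\lambda(z)^{-1}\uparrow1$, so that $\hat C_{p(z)}(k)^{-1}=1-2d\,p(z)\hat D(k)\to1-\hat D(k)$ as $z\uparrow z_c$, with $\tfrac12(1-\hat D(k))\le\hat C_{p(z)}(k)^{-1}\le(1-\hat D(k))+\chi^\lambda(z)^{-1}$, and $1-\hat D(k)=\tfrac1d\sum_i(1-\cos k_i)\le\tfrac1{2d}|k|^2$. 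Thus \eqref{eq:GdisplBd} follows directly: \eqref{eq:3} with $K=K_0$ gives $\|\sY^\lambda_{z,k}*\1\|_\infty\le3\underline{c}\,K_0\,\hat C_{p(z)}(k)^{-1}$, and fixing $k$ and letting $z\uparrow z_c$ (each $\sY^\lambda_{z,k}*\1(x)$ is non‑decreasing in $z$, so monotone convergence applies, and $\hat C_{p(z)}(k)^{-1}\to1-\hat D(k)$) yields $\|\sY^\lambda_{z_c,k}*\1\|_\infty\le3\underline{c}\,K_0(1-\hat D(k))=O(|k|^2)$.

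For \eqref{eq:GGdisplBd} I would fix $z<z_c$ and first reduce to a Fourier integral. All factors being non‑negative and $\IR\uparrow\1$ pointwise, monotone convergence gives $\sY^\lambda_{z,k}*G^\lambda_z*\1=\lim_{R\to\infty}\sY^\lambda_{z,k}*G^\lambda_z*\IR$; writing $\IR=\IR^*-\delta_0$ (recall \eqref{def:IR*}), one has $\|\sY^\lambda_{z,k}*G^\lambda_z*\IR\|_\infty\le\|\sY^\lambda_{z,k}*G^\lambda_z*\IR^*\|_\infty+\|\sY^\lambda_{z,k}*G^\lambda_z\|_\infty$. Using $\|g\|_\infty\le\|\hat g\|_1$ together with $\hIR^*\ge0$ (Remark~\ref{rem:1pos}), the first summand is at most $\int_{[-\pi,\pi]^d}|\hat\sY^\lambda_{z,k}(\ell)|\,|\hat G^\lambda_z(\ell)|\,\hIR^*(\ell)\,\tfrac{\dd\ell}{(2\pi)^d}$ and the second at most the same integral with $\hIR^*$ removed. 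Then I would insert $|\hat\sY^\lambda_{z,k}(\ell)|=\tfrac12|\Delta_k\hat G^\lambda_z(\ell)|\le K_0\,U_{p(z)}(k,\ell)$ (from $f_3(z)\le K_0$), $|\hat G^\lambda_z(\ell)|\le K_0\hat C_{p(z)}(\ell)\le K_0\hat C_{1/(2d)}(\ell)$ (from $f_2(z)\le K_0$ and $p(z)\le\tfrac1{2d}$), and $\hat C_{p(z)}(\ell\pm k)\le\hat C_{1/(2d)}(\ell\pm k)$ inside $U_{p(z)}$, so that both summands are bounded by $K_0^3\,\hat C_{p(z)}(k)^{-1}\,I_\bullet(k)$, where, with
\[
\Phi_k(\ell):=\hat C_{1/(2d)}(\ell-k)\hat C_{1/(2d)}(\ell)^2+\hat C_{1/(2d)}(\ell+k)\hat C_{1/(2d)}(\ell)^2+\hat C_{1/(2d)}(\ell-k)\hat C_{1/(2d)}(\ell+k)\hat C_{1/(2d)}(\ell),
\]
one has $I_{\IR^*}(k)=\int\Phi_k\,\hIR^*\,\tfrac{\dd\ell}{(2\pi)^d}$ and $I_1(k)=\int\Phi_k\,\tfrac{\dd\ell}{(2\pi)^d}$.

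The core of the argument, and the step I expect to be the main obstacle, is the bubble‑with‑displacement estimate of $I_1$ and $I_{\IR^*}$. For $I_1$ the classical one‑loop bounds (using $\hat C_{1/(2d)}(\ell)\asymp|\ell|^{-2}$, as in the estimates near \cite[(5.2),(5.10)]{SladeBook} and \cite[App.~A]{MadraSlade93}) give $I_1(k)=O(1)$ for $d\ge7$ and $O(\log(1/|k|))$ for $d=6$. For $I_{\IR^*}$ I would write $\hIR^*=\sum_{i=1}^d\hIR^i$, use \eqref{eq:Fourierindicatori} to note that $\hIR^i(\ell)$ depends only on $\ell_i$ with $\int_{[-\pi,\pi]}\tfrac{\dd\ell_i}{2\pi}\hIR^i(\ell)\le\tfrac1{2\pi d}$ uniformly in $R$, integrate the variable $\ell_i$ first (the Gaussian weight concentrating at $\ell_i=0$), and use \eqref{eq:CdboundedCd-1} to dispose of the remaining singular coordinate; this reduces $I_{\IR^*}(k)$, up to a constant, to a bubble‑with‑displacement in the $(d-1)$‑dimensional hyperplane $\{\ell_i=0\}$, whose $|\ell|^{-4}$ singularity is integrable precisely because $d>5$, and which is $O(1)$ for $d\ge8$ and $O(|k|^{d-7})$ for $d\in\{6,7\}$. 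Finally, multiplying the $I_\bullet$‑bounds by $\hat C_{p(z)}(k)^{-1}\asymp|k|^2$, then letting $z\uparrow z_c$ (monotone convergence in $x$, plus $\hat C_{p(z)}(k)^{-1}\to1-\hat D(k)$) and $R\uparrow\infty$ (monotone convergence), one gets $\|\sY^\lambda_{z_c,k}*G^\lambda_{z_c}*\1\|_\infty=O\big(|k|^{(d-5)\wedge2}\big)$, with only a harmless extra $\log(1/|k|)$ at the threshold dimensions $d\in\{6,7\}$, which is \eqref{eq:GGdisplBd}. Apart from this lower‑dimensional reduction and the identification of $d>5$ as the convergence threshold there, the argument is routine bookkeeping paralleling the proof of Lemma~\ref{lem:equi510}.
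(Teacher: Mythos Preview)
Your proposal is correct and follows essentially the same route as the paper: bootstrap bound $f\le K_0$ on $[0,z_c)$, Fourier control via $f_2,f_3\le K_0$ and $\hIR^*\ge0$, then reduction to a $(d-1)$-dimensional triple bubble through \eqref{eq:CdboundedCd-1}. The only differences are that the paper uses $\IR\le\IR^*$ directly by positivity (so your separate $I_1$-term is unnecessary) and disposes of the resulting integral by citing \cite[(4.30)]{ChenSakai} for the bound $\hat J_{0,1}(\tilde k)\le C|\tilde k|^{(d-7)\wedge0}$, which absorbs the threshold logarithms you flag.
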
 
 \proof
Lemma \ref{lem:neededforbootstrap} gives $f(z)\le4$ for $z<z_c$, and hence 
\eqref{eq:3} is valid for all $z<z_c$ with $K=4$. Monotone convergence then implies
\[\| \sY^\lambda_{z_c,k} *  \1 \|_{\infty}\leq    \frac{3 \,  \underline{c} \, 4 \,  }{\hat C_{1}(k)}
\leq c|k|^2,\] 
which is \eqref{eq:GdisplBd}. 

For \eqref{eq:GGdisplBd} we need a variation upon \eqref{eq:3}. 
Following the same steps as 
for  (\ref{eq:somestep}) and  (\ref{eq:threeterms}) (the only difference is the presence 
of an additional term $|\hat{G}(\ell)|$ in the integrand) we obtain
that 
\begin{multline}\label{eq:beforechen}
	\| \sY^\lambda_{z,k} *  G^\lambda_{z} *\ \1 \|_{\infty} 
= \lim\limits_{R \rightarrow \infty} 
	\| \sY^\lambda_{z,k} *  G^\lambda_{z} * \IR \|_{\infty} 
 \\ \leq 4 \, 
  \lim\limits_{R \rightarrow \infty} 
  \frac{1}{\hat{C}_{p(z)}(k)} 
\Big ( \int_{[-\pi, \pi]^d } \frac{\dd \ell}{(2\pi)^d} \,  \hat{C}^2_{\frac{1}{2d}}(\ell)   \hat{C}_{\frac{1}{2d}}(\ell-k) \, \hIR^* (\ell) \, \, 
\\  + \, \, 
 \int_{[-\pi, \pi]^d } \frac{\dd \ell}{(2\pi)^d} \  \hat{C}^2_{\frac{1}{2d}}(\ell) \, \hat{C}_{\frac{1}{2d}}(\ell+k) \, \, \hIR^*  (\ell)\,\\
  + \, \, 
  \int_{[-\pi, \pi]^d } \frac{\dd \ell}{(2\pi)^d} \,    \hat{C}_{\frac{1}{2d}}(\ell) \,  \hat{C}_{\frac{1}{2d}}(\ell-k) \,\hat{C}_{\frac{1}{2d}}(k+\ell) \, \hIR^* (\ell) \Big ).
  \end{multline}
We first replace $\hIR^* (\ell)$ by $d \sqrt{R \pi}  e^{ -  R \ell_d^2  } $ 
in the integrals  (\ref{eq:beforechen}) by symmetry and then we integrate with respect to the $d$-th coordinate. Then, by using \eqref{eq:CdboundedCd-1} we obtain that  (\ref{eq:beforechen})  is bounded from above by $\frac{4 d}{2 \pi } \,  \big  ( \frac{d}{d-1} \big )^3 \frac{1}{ \hat{C}_{p(z)}(k)  } (\hat{J}_{1,-1}(\tilde k)+\hat{J}_{0,-1}(\tilde k)+\hat{J}_{0,1}(\tilde k))$ where 
$$
 \hat{J}_{j, j^\prime}(\tilde k)  : = \int_{ [-\pi, \pi]^{d-1}  } \frac{  \dd^{d-1} \ell }{ (2 \pi)^{d-1}} \frac{1}{  \big ( 1 - \hat{D}(\ell)  \big ) \big ( 1  - \hat{D}(\ell + j\tilde k) \big )   \big ( 1  - \hat{D}(\ell + j^\prime \tilde k) \big ) }.
 $$
 By the Cauchy-Schwarz inequality and the periodicity of the integrand we deduce  that 
  $ \hat{J}_{-1,  1}(\tilde k) \leq    \hat{J}_{0,  1}(\tilde k) =   \hat{J}_{0,  -1}(\tilde k) . $
  (see \cite[(4.28)]{ChenSakai}).
  Therefore, by monotone convergence and by the bound (4.30) in \cite{ChenSakai} we get that
  \begin{equation}\label{eq:afterchen}
  \| \sY^\lambda_{z_c,k} *  G^\lambda_{z_c} *\ \1 \|_{\infty} \le 
  \frac{12 d}{2 \pi } \,  \big  ( \frac{d}{d-1} \big )^3 \frac{1}{ \hat{C}_{\frac{1}{2d}}(k)  } \hat{J}_{0,  1}(\tilde k) \le C_d |k|^2\, |\tilde k|^{(d-7)\wedge 0}=C_d |k|^{(d-5)\wedge 2}.
  \end{equation}
  In the last inequality we used that $|\tilde k| \le |k|$, where in the previous display $|\tilde k|^a$ denotes the $d-1$ dimensional Euclidean norm of $\tilde k \in \mathbb{R}^{d-1}$ to the power of $a$,  while $|k|^2$ is the $d$-dimensional Euclidean norm of $k \in \mathbb{R}^d$.
  This concludes the proof.


 \qed

 \section{The critical two-point function}\label{sec:CritG}
In this section, we use the results from the previous sections to prove the asymptotics for the critical two-point functions as formulated in Theorem \ref{thm:CritG}. 
We follow a well-established strategy that has proved useful in similar cases. 
Let us define for any $n, N \in \mathbb{N}_0$ and $x \in \mathbb{Z}^d$,
\begin{equation}\label{eq:pinNdefinition}
\pi^{(N)}_n(x) = \lambda^N
\sum\limits_{   \omega \in \cW_n(0,x) } \sum\limits_{ L \in \mathcal{L}_N[0,n]}
\prod_{ s t \in L}  (-\mathcal{U}_{st})  \prod_{  s^\prime t^\prime \in \mathcal{C}(L)} (1 + \lambda  \, \mathcal{U}_{s^\prime t^\prime}),
\end{equation}
\begin{equation}\label{eq:pindefinition}
\pi_n(x) = \sum\limits_{N=1}^{\infty} (-1)^N \pi^{(N)}_n(x),
\end{equation}
so that 
$$
\Pi_z(x) = \sum\limits_{n=1}^{\infty} \pi_n(x)  z^n.
$$
In order to study the asymptotic behavior of $c_n^\lambda(x)$, we investigate the divergence of its generating function $	G^\lambda_z(x)=\sum_{n \geq 0 }\ z^n c^\lambda_n(x)$ near its radius of convergence $z_c$, and then apply a Tauberian theorem to deduce the asymptotics of $c^\lambda_n(x)$ as $n\to\infty$. 
It is for this purpose that we allow $z$ to be complex-valued throughout this section. 

The key are strong moment estimates on the lace expansion coefficients $\pi_n$, which we formulate in the next lemma. 
For brevity, we remove $\lambda$ from the notation throughout the section. 

\begin{lemma}[Spatial and temporal moments of lace expansion coefficients]\label{lem:moments}
Under the conditions of Theorem \ref{thm:BubbleCond}, for $\varepsilon\in(0, \frac{d-5}{2})$ and $\delta\in(0,2\wedge(d-5))$, 
\begin{equation}
	\sum_x\sum_{n\ge2}n(n-1)^\varepsilon|\pi_n(x)|\,z_c^{n-1}<\infty \label{eq:TempDerBd}
\end{equation}
and 
\begin{equation}
	\sum_x\sum_{n\ge2}|x|^{2+\delta}|\pi_n(x)|\,z_c^{n-1}<\infty. \label{eq:SpaceDerBd}
\end{equation}
\end{lemma}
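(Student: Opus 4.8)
Since $-\lambda U_{st}\ge0$ and $1+\lambda U_{s't'}\ge0$ whenever $\lambda\in[0,1]$, we have $\pi_n^{(N)}(x)\ge0$ for every $N\ge1$, so $|\pi_n(x)|\le\sum_{N\ge1}\pi_n^{(N)}(x)$ and $\Pi_z^{(N)}(x)=\sum_n\pi_n^{(N)}(x)z^n$ has nonnegative coefficients. By monotone convergence it suffices to prove, for every $N\ge1$ and uniformly in $z\in[0,z_c)$, bounds $\sum_x\sum_n n(n-1)^\varepsilon\,\pi_n^{(N)}(x)\,z^{n}\le a_N^{\mathrm t}$ and $\sum_x\sum_n |x|^{2+\delta}\,\pi_n^{(N)}(x)\,z^{n}\le a_N^{\mathrm s}$ with $\sum_N a_N^{\mathrm t},\ \sum_N a_N^{\mathrm s}<\infty$; passing from $z^{n}$ to the $z^{n-1}$ of the statement only costs the harmless factor $z_c^{-1}\le 2d-1$. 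In both cases the plan is to feed the displacement, resp.\ the time, weight into the same block decomposition of $\sum_x\Pi_z^{(N)}(x)$ used to prove Lemma \ref{lemma:lemmaN} --- which writes it as $(dz\lambda)^N$ times a product of $N-1$ ``double'' blocks of type $G^\lambda_z*G^\lambda_z*\1$ and one terminal block of type $G^\lambda_z*\1$ --- to distribute the weight over the blocks so that no single block absorbs more than one unit of it, and to bound the affected blocks by the displacement estimates \eqref{eq:GdisplBd}--\eqref{eq:GGdisplBd}, the remaining blocks contributing $(\sB^\lambda_z)^{N-1}\le (C/d)^{N-1}$ (Theorem \ref{thm:BubbleCond} and \eqref{eq:2}), which gives the geometric decay in $N$.

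\textbf{Spatial moment.}
I would use the integral representation $|y|^{2+\delta}=c_{d,\delta}\int_{\R^d}[1-\cos(k\cdot y)]^2\,|k|^{-d-2-\delta}\,\d k$, valid since $2+\delta\in(0,4)$, to reduce $\sum_x|x|^{2+\delta}\Pi_z^{(N)}(x)$ to $c_{d,\delta}\int_{\R^d}|k|^{-d-2-\delta}\bigl(\sum_x[1-\cos(k\cdot x)]^2\Pi_z^{(N)}(x)\bigr)\,\d k$. On $\{|k|\ge1\}$ the inner sum is at most $4\sum_x\Pi_z^{(N)}(x)\le4(dz\lambda)^N(\sB^\lambda_z)^N$ by \eqref{eq:bounds2}, and $\int_{|k|\ge1}|k|^{-d-2-\delta}\,\d k<\infty$. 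On $\{|k|\le1\}$ one refines the proof of \eqref{eq:bounds2b}: applying \eqref{eq:inequalitycos} one expands $[1-\cos(k\cdot x)]^2$ into products of two displacement factors carried by segments of the diagram path, retaining only the useful placements --- one factor on the terminal $G^\lambda_z$-line (turning the terminal block into $\sY^\lambda_{z,k}*\1$) and the other on a $G^\lambda_z$-line of a double block (turning it into $\sY^\lambda_{z,k}*G^\lambda_z*\1$), or on an $\mathcal N$-line (producing a factor $1-\hat D(k)$) --- the remaining blocks contributing $(\sB^\lambda_z)^{N-2}$ and the combinatorics a factor polynomial in $N$. By \eqref{eq:GdisplBd} and \eqref{eq:GGdisplBd} the inner sum is then $O_N\bigl(|k|^{2+((d-5)\wedge2)}\bigr)$ as $k\to0$, so that $\int_{|k|\le1}|k|^{-d-2-\delta}\,|k|^{2+((d-5)\wedge2)}\,\d k<\infty$ exactly when $\delta<(d-5)\wedge2$, with geometric $N$-dependence. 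The case $N=1$ is immediate from $\Pi^{(1)}_z(x)\le dz\lambda\,G^\lambda_z(\,\cdot\,)\,\1$ (cf.\ \eqref{eq:Pi1x}), which reduces to $\sum_{t\ne0}|t|^{2+\delta}G^\lambda_z(te_i)<\infty$ and again needs only $\delta<d-5$.

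\textbf{Temporal moment.}
This is the delicate step. One cannot simply split $n=\sum_i|w^i|$ over the sub-walks of the block decomposition and put the weight on a single leg, since an unconstrained two-point-function leg carries no temporal decay ($\sum_m m^{1+\varepsilon}c^\lambda_m z_c^m=\infty$). Instead one treats the factor $n$ as a $z$-derivative and the factor $(n-1)^\varepsilon$ via the Hara--Slade fractional-derivative device --- for a power series $g(z)=\sum a_m z^m$ with $a_m\ge0$ one controls $\sum_m m(m-1)^\varepsilon a_m z_c^m$ by $\int_0^\infty t^{-1-\varepsilon}\bigl(g'(z_c)-g'(z_ce^{-t})\bigr)\,\d t$, cf.\ \cite[Ch.~6.6]{MadraSlade93}, \cite{HaraSlade92}, \cite{Slade89}, \cite{Heyde11} --- distributes these operators over the blocks by a Leibniz rule, and absorbs $z$-derivatives of two-point-function legs by the elementary bound $\partial_z G^\lambda_z\le 2d\,G^\lambda_z*D*G^\lambda_z$ (obtained by ignoring the interaction of the marked step). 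The block carrying the weight then becomes a \emph{time-weighted prudent bubble}, a diagram of type $G^\lambda*G^\lambda*\1$ in which the $\1$-line imposes a codimension-$(d-1)$ constraint; this is exactly the constraint responsible for the temporal decay $n^{-(d-1)/2}$ of the heuristic in Section \ref{sect:criticaldimension}, and it makes the time-weighted bubble finite, uniformly up to $z_c$, precisely when $\varepsilon<(d-5)/2$. Combined with $\sB^\lambda_z=O(1/d)$ for the unaffected blocks and with \eqref{eq:relationzd}, this yields the required geometrically-decaying bound on $\sum_x\sum_n n(n-1)^\varepsilon\pi_n^{(N)}(x)z^n$. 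The implementation parallels the self-avoiding-walk moment estimates in \cite[Ch.~6.6]{MadraSlade93}, \cite{HaraSlade92}, \cite{Slade89}, \cite{Heyde11} and the estimates of \cite{ChenSakai}, with our $\1$-lines in place of ordinary vertices.

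\textbf{Main obstacle.}
The hard part is the temporal estimate, and more generally the book-keeping that routes the (fractional) weight through the variable-length blocks of the prudent-walk diagram so that it is always caught by a $\1$-line rather than by a free leg --- the time-analogue of keeping the two displacement factors of $[1-\cos(k\cdot x)]^2$ on distinct blocks in the spatial case --- and then identifies the resulting weighted blocks with quantities controlled by \eqref{eq:GdisplBd}--\eqref{eq:GGdisplBd}. This is why those Fourier displacement estimates are established beforehand, and it is here that the infinite-range repellence --- responsible for the shift of the upper critical dimension from $4$ to $5$ --- genuinely enters.
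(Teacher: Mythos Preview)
Your temporal-moment argument is essentially the paper's own: both invoke the Hara--Slade fractional-derivative device of \cite[Ch.~6]{MadraSlade93}, route the weight $n(n-1)^\varepsilon$ onto a single block of the diagrammatic decomposition, and observe that the $\1$-line forces the random-walk contribution of that block to decay as $n^{-(d-1)/2}$ (isolated in the paper as Lemma~\ref{lem:boundDind}), so that $\sum_n n^{1+\varepsilon}n^{-(d-1)/2}$ converges exactly when $\varepsilon<(d-5)/2$. Your identification of this as the crux is accurate.

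For the spatial moment you take a different route. You represent $|x|^{2+\delta}$ via a single $d$-dimensional integral $\int_{\R^d}[1-\cos(k\cdot x)]^2\,|k|^{-d-2-\delta}\,\d k$, whereas the paper, following \cite[(2.30)]{Heyde11} and \cite[Lemma~4.1]{ChenSakai}, uses \emph{two one-dimensional} integrals
\(
\int_0^\infty u^{-1-\delta_1}\int_0^\infty v^{-1-\delta_2}(1-\cos(u x_1))(1-\cos(v x_1))\,\d u\,\d v
\)
with split exponents $\delta_1\in(\delta,\,2\wedge(d-5))$, $\delta_2=2+\delta-\delta_1$, exploiting the coordinate symmetry of $\pi_n$. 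The $(u,v)$-plane is then cut into four regions according to $u,v\gtrless1$, and on the region where both are small a double-displacement diagrammatic bound of the form $C N^4(c\lambda/d)^N\,\|\sY^\lambda_{z,v}\!*\1\|_\infty\,\|\sY^\lambda_{z,u}\!*G^\lambda_z\!*\1\|_\infty\le C' N^4(c\lambda/d)^N v^2\,u^{(d-5)\wedge2}$ is proved by a refinement of \eqref{eq:bounds2b}, with \eqref{eq:GdisplBd} and \eqref{eq:GGdisplBd} supplying the last inequality.

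The difference matters. Squaring \eqref{eq:inequalitycos} with a single frequency $k$ necessarily produces diagonal terms in which both displacement factors sit on the \emph{same} segment; the crude bound $[1-\cos]^2\le 2[1-\cos]$ then yields only $O(|k|^2)$ from that block, which is not integrable against $|k|^{-d-2-\delta}$ at the origin. The paper's two-frequency splitting buys precisely the flexibility needed to avoid this: on regions where one of $u,v$ exceeds $1$ one discards that factor by $1-\cos\le2$ and the remaining one-dimensional integral still converges (since $\delta_1,\delta_2>0$), while on the region where both are small the two factors can be placed on distinct blocks and integrated against independently tuned exponents. Your phrase ``retaining only the useful placements'' hides the diagonal contributions; you should either switch to the two-variable representation or explain how these terms are controlled (this would require a second-order analogue of \eqref{eq:GdisplBd}, which the paper does not develop).
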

\proof
We sketch the argument for \eqref{eq:TempDerBd}. The proof of Corol.\ 6.4.3 in \cite{MadraSlade93} gives that the left hand side is bounded from above by a constant times
\[ \sup_y 
\sum\limits_{x \in \mathbb{Z}^d} 
\sum_{n\ge2} \,n \, (n-1)^\varepsilon c_n(x) \,   \I{y}^0(x) z_c^{n-1} = \sup_{ y \in \mathbb{Z}^d }  \, \, \delta_z^\epsilon \, \,  \partial_z \,  \,   G_{z_c} * \1^0 (y),
\]
where $\delta_z^\epsilon$ is the fractional derivative,
as defined in (6.3.1) in \cite{MadraSlade93},
and $\partial_z$ is the partial derivative with respect to $z$.
By following the same steps as in the proof of Lemma  6.3.1 in
\cite{MadraSlade93}
(the only difference is the presence of Fourier transform of the function  $\1^0$
in the integral (6.4.12) in
\cite{MadraSlade93}, which is positive), we obtain 
that the previous term  is bounded from above by a constant times
\begin{equation}\label{eq:DBd1}
\int\sum_{n\ge2} n(n-1)^\varepsilon \hat D(k)^{n-2} \hat{\1^0} (k) \dk.
\end{equation}
Lemma \ref{lem:boundDind} gives that the right hand side of \eqref{eq:DBd1} is bounded from above by  
\begin{equation}
C \sum_{n\ge2}n(n-1)^\varepsilon n^{-\frac{d-1}{2}},
\end{equation} 
with $C$ a positive constant. This is finite if $1+\varepsilon-\frac{d-1}{2}<-1$.

We now move to the proof of (\ref{eq:SpaceDerBd}).
Using the integral representation as in (2.30) in
\cite{Heyde11}, Lemma 4.1 in
\cite{ChenSakai} and the rotational symmetry of the functions $\pi_n^{(N)}$ we obtain, as in (2.30) in
\cite{Heyde11}, that the left-hand side of 
(\ref{eq:SpaceDerBd})
is bounded from above by
a constant times
\begin{equation}\label{eq:integraldudv}
\int_{0}^{\infty}  \frac{\dd u }{u^{1 + \delta_1}} 
\int_{0}^{\infty}  \frac{\dd v }{v^{1 + \delta_2}} 
\sum\limits_{x \in \mathbb{Z}^d} 
\sum\limits_{N=2}^{\infty}
\big (
1 - \cos( u x_1) 
\big )
\big ( 
1 - \cos( v x_1) 
\big )
\pi_{n}^{N}(x) z_c^n,
\end{equation}
where
$$
\delta_1 \in (\delta,  2 \wedge {d-5}),  \quad  \delta_2 = 2+ \delta  - \delta_1.
$$
The same as in the proof of \cite[Lemma 2.4]{Heyde11}, we divide the integral above into four integrals,  $I = I_1 + I_2 + I_3 + I_4$, 
where $I_1$, $I_2$, $I_3$, $I_4$ are defined as  (\ref{eq:integraldudv}) but with integration ranges  respectively $\int_{0}^1 \dd u  \ldots \int_{1}^{\infty} \dd v \ldots$
for $I_1$, 
$\int_{0}^1 \dd u  \ldots \int_{1}^{\infty} \dd  v  \ldots$ for $I_2$,
$\int_{1}^\infty \dd u  \ldots \int_{0}^{1} \dd v \ldots $ for $I_3$,
and
$\int_{1}^\infty \dd u  \ldots \int_{1}^{\infty} \dd v  \ldots$
for $I_4$.
The bound $I_4 < \infty$ follows from $1 - \cos(t) \leq 2$
and (\ref{eq:TempDerBd}).
To upper bound $I_1$, $I_2$ and $I_3$ we 
use a slight modification of the arguments which allowed the derivation of (\ref{eq:bounds2b}),
obtaining that, 
\begin{multline}
\sum\limits_{x \in \mathbb{Z}^d} 
\sum\limits_{N=2}^{\infty}
\, 
\big (
1 - \cos( u \,  x_1) 
\big )
\, 
\big ( 
1 - \cos( v \,  x_1) 
\big )
\, \pi_{n}^{(N)}(x) \, z_c^n \\ 
\leq  
C_1 N^4 \,  \lambda^N \,  \max \{  (dz)^N, 1  \}   
(B^\lambda_z)^{N-2}
\Big ( 
\, 
  \|   \sY^\lambda_{z,v} * \1    \|_\infty \, 
  \|      \sY^\lambda_{z,u}  * G_z^\lambda  \, * \1  \,   \|_\infty  
  \Big ) \\ + 
  C_2 N^3  \, ( d z \lambda )^N \, 
(B^\lambda_z)^{N-1}
\big (1 - \hat{D}(v) \big )
  \|      \sY^\lambda_{z,u}  * G_z^\lambda  \, * \1  \,   \|_\infty  
  \le C N^4 \Big(c \frac{\lambda}{d}\Big)^N v^2 u^{(d-5)\wedge 2},
\end{multline} 
where  $c$ is independent of the dimension.  By replacing the upper bound above in the integrals (\ref{eq:integraldudv}) we deduce  by our choice of $\delta_1$ and $\delta_2$,  that such integrals converge,  hence  concluding the proof.
\qed 

\begin{lemma}\label{lem:PiNullPiK}
Under the conditions of Theorem \ref{thm:BubbleCond}, for $z\le z_c$ there exists $K_z>0$ such that 
\[ \hat\Pi_z(0)-\hat\Pi_z(k)=\big(K_z+o(1)\big)\, \big(1-\hat D(k)\big)\]
as $|k|\to0$, with $K_z= \sum\limits_{x \in \mathbb{Z}^d} \sum\limits_{n \geq 1} |x|^2 \pi_n(x) z^n$.
\end{lemma}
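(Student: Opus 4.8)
The plan is to write $\hat\Pi_z(0)-\hat\Pi_z(k)$ as a lattice sum, Taylor-expand the oscillating factor to second order, and use the cubic symmetry of $\Pi_z$ to identify the resulting quadratic form with a multiple of $1-\hat D(k)$. The only quantitative input is the absolute moment bound of Lemma~\ref{lem:moments}.

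First I would record the convergence facts. By Proposition~\ref{thm:equi41} and Theorem~\ref{thm:BubbleCond} one has $\sum_x|\Pi_z(x)|\le\sum_{N\ge1}\bigl(dz\lambda\,\sB_z^\lambda\bigr)^N<\infty$ for every $z\le z_c$, so $\hat\Pi_z$ is well defined. Moreover $\Pi_z$ is invariant under every signed coordinate permutation $\sigma$ of $\Z^d$: indeed $U_{st}(\sigma w)=U_{st}(w)$ because $\sigma$ preserves the relation $a-w(t)\in\N_0\bigl(w(t)-w(t-1)\bigr)$, hence the whole summand in \eqref{eq:PiN} (equivalently in \eqref{eq:pinNdefinition}) is unchanged, so $\Pi_z(\sigma x)=\Pi_z(x)$; in particular $\Pi_z(x)=\Pi_z(-x)$, whence $\hat\Pi_z(k)=\sum_x\Pi_z(x)\cos(k\cdot x)$ and
\[
\hat\Pi_z(0)-\hat\Pi_z(k)=\sum_{x\in\Z^d}\bigl(1-\cos(k\cdot x)\bigr)\Pi_z(x),\qquad \Pi_z(x)=\sum_n\pi_n(x)z^n .
\]
Fixing $\delta\in(0,2\wedge(d-5))$, \eqref{eq:SpaceDerBd} together with $z^n\le z_c^n$ gives $\sum_x|x|^{2+\delta}|\Pi_z(x)|\le\sum_x\sum_n|x|^{2+\delta}|\pi_n(x)|z^n\le z_c\sum_x\sum_n|x|^{2+\delta}|\pi_n(x)|z_c^{n-1}<\infty$, and the same holds with $|x|^2$ in place of $|x|^{2+\delta}$ (the summand vanishes at $x=0$, and $|x|^2\le|x|^{2+\delta}$ otherwise). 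In particular $K_z:=\sum_x|x|^2\Pi_z(x)=\sum_x\sum_{n\ge1}|x|^2\pi_n(x)z^n$ is a finite number, and Fubini licenses the rearrangements below.

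Next I would insert the elementary bound $\bigl|1-\cos t-\tfrac12t^2\bigr|\le|t|^{2+\delta}$, valid for all $t\in\R$ and $\delta\in[0,2]$ (it follows from $-\tfrac1{24}t^4\le1-\cos t-\tfrac12t^2\le0$ and $\bigl|1-\cos t-\tfrac12t^2\bigr|\le\tfrac12t^2$ by interpolation), together with $|k\cdot x|\le|k|\,|x|$. This yields
\[
\hat\Pi_z(0)-\hat\Pi_z(k)=\frac12\sum_x(k\cdot x)^2\Pi_z(x)+R(k),\qquad |R(k)|\le|k|^{2+\delta}\sum_x|x|^{2+\delta}|\Pi_z(x)|=O\bigl(|k|^{2+\delta}\bigr).
\]
The cubic symmetry forces $\sum_x x_ix_j\Pi_z(x)=0$ for $i\ne j$ (apply the sign flip in coordinate $i$) and $\sum_x x_i^2\Pi_z(x)=\tfrac1d\sum_x|x|^2\Pi_z(x)=\tfrac{K_z}{d}$ for each $i$ (apply the transposition $(i\,j)$), so $\tfrac12\sum_x(k\cdot x)^2\Pi_z(x)=\tfrac12\sum_i k_i^2\tfrac{K_z}{d}=\tfrac{|k|^2}{2d}K_z$. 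Finally, by \eqref{eq:Dk} one has $1-\hat D(k)=\bigl(\tfrac1{2d}+o(1)\bigr)|k|^2$ as $|k|\to0$, hence $\tfrac{|k|^2}{2d}=(1+o(1))\bigl(1-\hat D(k)\bigr)$ and $|k|^{2+\delta}=|k|^\delta|k|^2=o\bigl(1-\hat D(k)\bigr)$, so that
\[
\hat\Pi_z(0)-\hat\Pi_z(k)=\frac{|k|^2}{2d}K_z+O\bigl(|k|^{2+\delta}\bigr)=\bigl(K_z+o(1)\bigr)\bigl(1-\hat D(k)\bigr),
\]
which is the assertion, with $K_z=\sum_x\sum_{n\ge1}|x|^2\pi_n(x)z^n$ as stated. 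The positivity of $K_z$ I would verify separately by a direct estimate of the coefficients $\pi_n^{(N)}$; this is routine given the diagrammatic bounds of Proposition~\ref{thm:equi41} and I would not detail it.

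Given Lemma~\ref{lem:moments} there is no serious obstacle here: the content is a uniform Taylor expansion. The one point demanding care is that the interchange of the $x$- and $n$-summations and the uniformity of the remainder $R(k)$ both rest on the absolute moment bound \eqref{eq:SpaceDerBd}, which — though stated at $z_c$ — transfers to every $z\le z_c$ because $z\mapsto z^n$ is increasing on $[0,z_c]$.
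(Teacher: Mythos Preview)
Your argument is correct and follows essentially the same route as the paper: rewrite $\hat\Pi_z(0)-\hat\Pi_z(k)$ as $\sum_x(1-\cos(k\cdot x))\Pi_z(x)$, Taylor-expand the cosine to second order with a remainder controlled by the $(2+\delta)$-moment bound of Lemma~\ref{lem:moments}, use the cubic symmetry of $\pi_n$ to reduce the quadratic form to $\tfrac{|k|^2}{2d}K_z$, and compare with the expansion \eqref{eq:Dk} of $1-\hat D(k)$. You are simply more explicit than the paper about the symmetry justification and the Fubini step; and your observation that the positivity of $K_z$ is not actually established by this computation is accurate---the paper's proof does not address it either.
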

Mind that Lemma \ref{lem:moments} guarantees that $K_z$ is finite. 
\proof The left hand side may be rewritten as 
\[ \sum_x\sum_{n\ge1}(1-\cos(k\cdot x))\pi_n(x)z^n\]
Using $1-\cos(k\cdot x)=\frac12(k\cdot x)^2+O(k\cdot x)^{2+\delta}$ with $\delta$ as in Lemma \ref{lem:moments} and the spatial symmetry of $\pi_n$ we deduce by Lemma   \ref{lem:moments} that 
\begin{align}
\sum_x\sum_{n\ge1}(1-\cos(k\cdot x)\pi_n(x)z ^n 
&= \sum_x\sum_{n\ge1}\sum_{i,j=1}^d\frac12k_ik_jx_ix_j\pi_n(x)z ^n + O(|k|^{2+\delta})\sum_x\sum_{n\ge1}|x|^{2+\delta}\pi_n(x)z ^n \nonumber \\
&= \frac{ |k|^2}{2d}\sum_x\sum_{n\ge1}|x|^2\pi_n(x)z^n + o( |k|^2).
\end{align}
Noting that $1-\hat D(k)=\big( \frac{1}{2d} \, + \, o(1)\big)\,|k|^2$ (cf.\ \eqref{eq:Dk})  and comparing 
this expansion with the previous identity concludes the proof. 
\qed

\subsection{Proof of Theorem \ref{thm:CritG}}
We now start the proof of Theorem \ref{thm:CritG}, and we follow the strategy in \cite[Section 2]{Heyde11}. The starting point is the expansion identity \eqref{eq:decompositionGx} (in Fourier form), which we rewrite for $|z|<z_c$ as 
\begin{equation}
  z_c\,\hat G_z(k)
  = \frac{1}{[1-z/z_c]\left(A(k)+E_z(k)\right)+B(k)} 
  \label{eqProof7}
  = \frac{1}{[1-z/z_c]\,A(k)+B(k)} -\Theta_z(k),
\end{equation}
with 
\begin{eqnarray}
  \label{eqProof3}
  A(k) &=&  \hat D(k)+\partial_z\hat\Pi_z(k)\big|_{z=z_c},\\
  \label{eqProof4}
  B(k) &=&  1-\hat D(k)+\frac1{z_c}\left(\hat\Pi_{z_c}(0)-\hat\Pi_{z_c}(k)\right),\\
  \label{eqProof5}
  E_z(k) &=& \frac{\hat\Pi_{z_c}(k)-\hat\Pi_{z}(k)}{z_c-z}-\partial_z\hat\Pi_z(k)\big|_{z=z_c},
\end{eqnarray}
and
\begin{equation}\label{eqProof8}
    \Theta_z(k)=\frac{[1-z/z_c]\,E_z(k)}
                    {\big([1-z/z_c]\left(A(k)+E_z(k)\right)+B(k)\big)\,
                     \big([1-z/z_c]\,A(k)+B(k)\big)}.
\end{equation}
For the first term in (\ref{eqProof7}) we write
\begin{equation}\label{eqProof9}
    \frac{1}{[1-z/z_c]\,A(k)+B(k)}
    =\frac{1}{A(k)+B(k)}\sum_{n=0}^\infty\left(\frac
     z{z_c}\right)^n\left(\frac{A(k)}{A(k)+B(k)}\right)^n,
\end{equation}
and the geometric sum converges whenever $|z|<z_c\left(A(k)+B(k)\right)/A(k)$; the latter term approximates $z_c$ as $|k|\to0$.
For $|z|<z_c$, we can expand $\Theta_z(k)$ into a power series for $|z|<z_c$ as
\begin{equation}\label{eqProof10}
    \Theta_z(k)=\sum_{n=0}^\infty\theta_n(k)\,z^n,
\end{equation}
and thus get 
\begin{equation}\label{eqProof11}
    \hat c_n(k)=\frac1{z_c}\left(\frac{z_c^{-n}}{A(k)+B(k)}\left(\frac{A(k)}{A(k)+B(k)}\right)^n-\theta_n(k)\right),
    \quad
    \hat c_n(0)=\frac1{z_c}\left(\frac{z_c^{-n}}{A(0)}-\theta_n(0)\right).
\end{equation}

\begin{lemma}\label{lem:GzNull} 
Under the conditions of Theorem \ref{thm:BubbleCond} there exist  constants $c_1, c_2 > 0$ such that
for every $k \in [0, \pi)^{d}$, 
\begin{equation}\label{eq:suscbound1}
z^{-1}_c \, \hat{G}(k)^{-1} \geq z^{-1}_c  \, \hat{G}(0)^{-1}  \geq  c_1 |z_c - z|
\end{equation}
and 
\begin{equation}\label{eq:suscbound2}
z_c^{-1} \hat{G}(0)^{-1}  \leq  c_2 \,  |z_c - z|.
\end{equation}
\end{lemma}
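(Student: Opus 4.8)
The plan is to reduce the statement to properties of the single real function $F_z:=\hat G^\lambda_z(0)^{-1}=\chi^\lambda(z)^{-1}$, which by Proposition~\ref{theo:recursionforG} evaluated at $k=0$ equals $1-2dz-\hat\Pi^\lambda_z(0)$. Two facts about $F_z$ on $[0,z_c]$ do all the work: $F_{z_c}=0$, and $z\mapsto F_z$ is Lipschitz with derivative pinned in a small interval around $-2d$. Throughout $z\in[0,z_c)$ is real (so the Fourier transforms below are real), and I suppress $\lambda$ where convenient.

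First I would dispose of the inequality $z_c^{-1}\hat G^\lambda_z(k)^{-1}\ge z_c^{-1}\hat G^\lambda_z(0)^{-1}$ in \eqref{eq:suscbound1}. Since $c^\lambda_n(x)\ge0$, $\hat G^\lambda_z(k)=\sum_x G^\lambda_z(x)\cos(k\cdot x)\le\sum_x G^\lambda_z(x)=\hat G^\lambda_z(0)$. For positivity of $\hat G^\lambda_z(k)$, Proposition~\ref{theo:recursionforG} in Fourier form gives $\hat G^\lambda_z(k)\bigl(1-2dz\hat D(k)-\hat\Pi^\lambda_z(k)\bigr)=1$ with the second factor finite (using $|\hat\Pi^\lambda_z(k)|\le\sum_x|\Pi^\lambda_z(x)|\le\tilde c\,\lambda/d$, \eqref{eq:boundPiNSUmZ}), so $\hat G^\lambda_z(k)\ne0$ for every $z\in[0,z_c)$. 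As $z\mapsto\hat G^\lambda_z(k)=\sum_n\hat c^\lambda_n(k)z^n$ is continuous on $[0,z_c)$ (the series converges there since $|\hat c^\lambda_n(k)|\le c^\lambda_n$), never vanishes, and equals $1$ at $z=0$, the intermediate value theorem forces $\hat G^\lambda_z(k)>0$. Hence $0<\hat G^\lambda_z(k)\le\hat G^\lambda_z(0)$, so $\hat G^\lambda_z(k)^{-1}\ge\hat G^\lambda_z(0)^{-1}>0$, and multiplying by $z_c^{-1}>0$ gives the claimed bound.

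The only model-specific point is $\chi^\lambda(z_c)=\infty$, and I would obtain it by subadditivity: since the weight $\phi^\lambda(w)$ factorizes, up to a junction factor at most $1$, over the first $m$ and the last $n$ steps of a walk $w$ of length $m+n$, one has $c^\lambda_{m+n}\le c^\lambda_m c^\lambda_n$ (for $\lambda=1$ this is the subadditivity recalled in Section~\ref{sec:ModelResults}); as $c^\lambda_n\ge1$ (the straight walk), Fekete's lemma yields $c^\lambda_n\ge\mu^n=z_c^{-n}$. Therefore $\chi^\lambda(z)=\sum_{n\ge0}c^\lambda_n z^n\ge\sum_{n\ge0}(z/z_c)^n=z_c/(z_c-z)$ on $[0,z_c)$, which already gives \eqref{eq:suscbound2}: $z_c^{-1}F_z=z_c^{-1}\chi^\lambda(z)^{-1}\le z_c^{-2}(z_c-z)=\mu^2(z_c-z)$, so $c_2=\mu^2$. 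It also shows $F_z\to0$ as $z\uparrow z_c$, and since $\hat\Pi^\lambda_z(0)=\sum_n z^n\sum_x\pi_n(x)$ converges at $z_c$ by Lemma~\ref{lem:moments}, the limit identifies $F_{z_c}=1-2dz_c-\hat\Pi^\lambda_{z_c}(0)=0$.

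It remains to prove $z_c^{-1}F_z\ge c_1(z_c-z)$ in \eqref{eq:suscbound1}, and this is where Lemma~\ref{lem:moments} re-enters together with the smallness built into Theorem~\ref{thm:BubbleCond}. Dropping the factor $(n-1)^\varepsilon\ge1$ in \eqref{eq:TempDerBd} shows $M:=\sum_x\sum_{n\ge1}n\,|\pi_n(x)|\,z_c^{n-1}<\infty$, and the diagrammatic estimates of Proposition~\ref{thm:equi41} with Lemma~\ref{lem:equi510} give $M\le C\lambda/d$ with $C$ absolute, hence $M<d$ under the hypotheses. Consequently $|\partial_z\hat\Pi^\lambda_z(0)|\le M$ on $[0,z_c)$, $F$ is Lipschitz there, and $-F_z'=2d+\partial_z\hat\Pi^\lambda_z(0)\in[2d-M,\,2d+M]$. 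Integrating from $z$ to $z_c$ and using $F_{z_c}=0$,
\[
 (2d-M)(z_c-z)\;\le\;F_z\;=\;\int_z^{z_c}\bigl(2d+\partial_s\hat\Pi^\lambda_s(0)\bigr)\,\mathrm{d}s\;\le\;(2d+M)(z_c-z),
\]
and multiplying by $z_c^{-1}=\mu\ge d-1\ge1$ yields $z_c^{-1}F_z\ge(2d-M)(z_c-z)\ge d(z_c-z)$, i.e.\ the second bound in \eqref{eq:suscbound1} with $c_1=d$. The main obstacle is not any single estimate but the assembly: the only non-formal ingredients are $\chi^\lambda(z_c)=\infty$ (handled by subadditivity) and $M<d$ (handled by Lemma~\ref{lem:moments} and the diagrammatic bounds), and it is precisely the latter that forces the dimension to be large, respectively $\lambda/d$ small.
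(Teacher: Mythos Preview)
Your argument is correct and takes a more elementary route than the paper's. The paper obtains both the upper and lower bounds on $z_c^{-1}\hat G_z(0)^{-1}$ from the single decomposition \eqref{eqProof7}, which at $k=0$ reads $z_c^{-1}\hat G_z(0)^{-1}=(1-z/z_c)\bigl(A(0)+E_z(0)\bigr)$ with $A(0)>0$ and the fractional-derivative remainder $E_z(0)=O(|z_c-z|^\varepsilon)$ imported from \cite{Heyde11}. You instead split the two bounds: subadditivity of $c_n^\lambda$ yields \eqref{eq:suscbound2} with $c_2=\mu^2$ without any lace-expansion input, and a mean-value argument on $F_z=1-2dz-\hat\Pi_z^\lambda(0)$ (using $F_{z_c}=0$ and a uniform bound on $-F_z'$) gives the lower bound in \eqref{eq:suscbound1}. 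Your route avoids the $E_z$ analysis entirely, at the price of needing $|\partial_z\hat\Pi_z^\lambda(0)|\le M<2d$ on all of $[0,z_c]$ rather than only near $z_c$. One soft spot: the quantitative $M\le C\lambda/d$ is not literally a consequence of Proposition~\ref{thm:equi41} and Lemma~\ref{lem:equi510} as you cite them, since those bound $\sum_x\Pi_z^{(N)}(x)$ rather than its $z$-derivative; the missing step is the temporal-mark diagrammatic bound underlying the proof of Lemma~\ref{lem:moments} (cf.\ \cite[Corollary~6.4.3]{MadraSlade93}), after which the $O(\lambda/d)$ conclusion does follow. Since the paper's own assertion $A(0)>0$ rests on exactly the same smallness of $\partial_z\hat\Pi_{z_c}(0)$, the two approaches are equivalent in strength.
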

\proof 
The first inequality in (\ref{eq:suscbound1}) follows from the definition of the Fourier transform
by taking the absolute value inside the sum over $x$. 
The second inequality in  (\ref{eq:suscbound1}) and    (\ref{eq:suscbound2})
follow from   \eqref{eqProof7}
since   $A(0)>0$, $B(0)=0$ and by Lemma \ref{lem:moments} we have that
 $E_z(k) \leq O (|z-z_c|^{\epsilon})$ for any $\epsilon$ as in 
 Lemma \ref{lem:moments}
 (for the proof see  
 \cite[(2.64)]{Heyde11}).
 \qed

\begin{lemma}\label{lem:thetaBd}
Under the conditions of Theorem \ref{thm:BubbleCond}, 
$|\theta_n(k)|\le O(z_c^{-n}n^{-\varepsilon})$ for all $\epsilon \in  (0,  \frac{d-5}{2})$  uniformly in $k$. 
\end{lemma}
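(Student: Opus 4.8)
The plan is to bound $\theta_n(k)$ by extracting the $n$-th Taylor coefficient of $\Theta_z(k)$ from its explicit expression \eqref{eqProof8}, using a contour integral (Cauchy's formula) on a circle of radius slightly below $z_c$ and optimising the radius. Concretely, recall that $\Theta_z(k)$ has a numerator proportional to $[1-z/z_c]E_z(k)$ and a denominator that is a product of two factors, each of the form $[1-z/z_c]A(k)+B(k)$ (up to the small perturbation $E_z(k)$). By Lemma \ref{lem:moments}, more precisely by the estimate $E_z(k)\le O(|z-z_c|^\varepsilon)$ for $\varepsilon\in(0,\frac{d-5}{2})$ that was already used in the proof of Lemma \ref{lem:GzNull} (following \cite[(2.64)]{Heyde11}), the numerator is $O(|z_c-z|^{1+\varepsilon})$ uniformly in $k$, while Lemma \ref{lem:GzNull} gives that each denominator factor is bounded below in absolute value by a constant times $|z_c-z|$ for $z$ near $z_c$ on the real axis, and one shows the same lower bound persists on the whole circle $|z|=r$ by the same argument (the key inputs $A(k)\ge0$, $B(k)\ge0$, $A(0)>0$ hold, and $E_z(k)$ is a lower-order perturbation). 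Hence $|\Theta_z(k)|\le C\,|z_c-z|^{\varepsilon-1}$ on $|z|=r$.

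Then by Cauchy's estimate,
\begin{equation}
|\theta_n(k)|=\left|\frac{1}{2\pi i}\oint_{|z|=r}\frac{\Theta_z(k)}{z^{n+1}}\,\mathrm dz\right|\le \frac{C}{r^n}\,\sup_{|z|=r}|\Theta_z(k)|\le \frac{C}{r^n}\,(z_c-r)^{\varepsilon-1}.
\end{equation}
Choosing $r=z_c(1-1/n)$ gives $(z_c-r)^{\varepsilon-1}=(z_c/n)^{\varepsilon-1}=O(n^{1-\varepsilon})$ and $r^{-n}=z_c^{-n}(1-1/n)^{-n}=O(z_c^{-n})$, so that $|\theta_n(k)|\le O(z_c^{-n}n^{1-\varepsilon})$. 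To obtain the sharper claimed bound $O(z_c^{-n}n^{-\varepsilon})$ one has to gain an extra factor $n^{-1}$; this comes from the fact that $\Theta_z(k)$, when expanded, is a power series whose coefficients are themselves convolutions involving $\theta_n$, so the standard trick is to differentiate: estimate $\partial_z\Theta_z(k)$ (which picks up one more power of $|z_c-z|^{-1}$ in the denominator but still an integrable singularity since $\varepsilon<1$) and relate $n\theta_n(k)$ to the $(n-1)$-st coefficient of $\partial_z\Theta_z$. Running the same Cauchy estimate on $\partial_z\Theta_z(k)$ with the temporal-moment control from \eqref{eq:TempDerBd} then yields $n|\theta_n(k)|\le O(z_c^{-n}n^{1-\varepsilon})$, i.e.\ the desired $|\theta_n(k)|\le O(z_c^{-n}n^{-\varepsilon})$.

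The uniformity in $k$ is automatic because all the bounds on the numerator and denominator of $\Theta_z(k)$ are uniform in $k$: the numerator bound $|E_z(k)|\le O(|z_c-z|^\varepsilon)$ holds uniformly (it comes from $\sum_x\sum_n n(n-1)^\varepsilon|\pi_n(x)|z_c^{n-1}<\infty$ of Lemma \ref{lem:moments}, with the cosine factor estimated trivially by $2$), and the denominator lower bound from Lemma \ref{lem:GzNull} is stated for all $k\in[0,\pi)^d$. This argument is essentially identical to \cite[Lemma 2.5 / (2.65)--(2.68)]{Heyde11}, so in the paper I would cite that reference and only indicate the necessary substitutions (the fractional-derivative moment bounds of Lemma \ref{lem:moments} replacing the corresponding estimates there). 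The main obstacle, and the only place genuine work is needed, is verifying that the denominator lower bound $|[1-z/z_c](A(k)+E_z(k))+B(k)|\ge c|z_c-z|$ holds uniformly on the \emph{complex} circle $|z|=r$ (not just on the real segment), which requires controlling the argument of $[1-z/z_c]$ together with the positivity of $A$ and $B$ and the smallness of $E_z$; this is exactly the content of \cite[(2.66)]{Heyde11} and goes through unchanged here since $A(0)>0$ and $B(0)=0$ are all that is used.
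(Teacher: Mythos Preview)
Your overall strategy---Cauchy's formula on a circle of radius $r=z_c(1-1/n)$, together with the estimate $|E_z(k)|\le C|1-z/z_c|^\varepsilon$ (from Lemma~\ref{lem:moments}) and the denominator control of Lemma~\ref{lem:GzNull}, then deferring to \cite{Heyde11}---is exactly what the paper does (it simply cites \cite[Lemma~2.1]{Heyde11} and notes that the argument is model-independent once Lemmas~\ref{lem:moments} and~\ref{lem:GzNull} are in hand). So at the level of ``which inputs and which reference'', you are aligned with the paper.

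However, the mechanism you describe for upgrading $O(z_c^{-n}n^{1-\varepsilon})$ to $O(z_c^{-n}n^{-\varepsilon})$ via differentiation is not correct as stated. You use the \emph{supremum} form of Cauchy's estimate, $|\theta_n(k)|\le r^{-n}\sup_{|z|=r}|\Theta_z(k)|$, and then propose to apply ``the same Cauchy estimate'' to $\partial_z\Theta_z$. But $\partial_z\Theta_z$ picks up exactly one extra power of $|z_c-z|^{-1}$ (as you note), so $\sup_{|z|=r}|\partial_z\Theta_z(k)|\le C(z_c-r)^{\varepsilon-2}$, and the sup-Cauchy bound yields
\[
n|\theta_n(k)|\le C\,r^{-(n-1)}(z_c-r)^{\varepsilon-2}=O\!\big(z_c^{-n}n^{2-\varepsilon}\big),
\]
i.e.\ $|\theta_n(k)|\le O(z_c^{-n}n^{1-\varepsilon})$, the \emph{same} bound as before---there is no net gain. (Think of the toy model $\Theta_z=(1-z/z_c)^{\varepsilon-1}$: its coefficients are $\sim z_c^{-n}n^{-\varepsilon}$, but sup-Cauchy on either $\Theta_z$ or $\partial_z\Theta_z$ only gives $O(z_c^{-n}n^{1-\varepsilon})$.) Your parenthetical ``integrable singularity since $\varepsilon<1$'' is also off: $|z_c-z|^{\varepsilon-2}$ is \emph{not} integrable through $z_c$ for $\varepsilon<1$.

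The argument in \cite{Heyde11} that the paper invokes does not rely on this differentiation shortcut. What actually produces the extra factor $n^{-1}$ is either (i) replacing the sup bound by the full contour integral $\frac{1}{2\pi}\int_{-\pi}^{\pi}|\partial_z\Theta_{re^{i\theta}}(k)|\,d\theta$, which for a singularity of order $\varepsilon-2<-1$ localises near $\theta=0$ and scales like $(z_c-r)^{\varepsilon-1}$ rather than $(z_c-r)^{\varepsilon-2}$; or (ii) exploiting the convolution structure $\theta_n(0)=\tfrac{z_c}{A(0)}\sum_{m\le n}c_{n-m}e_m$ together with $\sum_m e_m z_c^m=E_{z_c}(0)=0$ and the tail bound $|e_m|\le Cz_c^{-m}m^{-1-\varepsilon}$. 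Either route uses the moment bound \eqref{eq:TempDerBd} in an essential way, but neither is the plain ``differentiate and repeat sup-Cauchy'' step you wrote. If you keep your write-up, you should replace that step by one of these, or---as the paper does---simply cite \cite[Lemma~2.1]{Heyde11} after recording that Lemmas~\ref{lem:moments} and~\ref{lem:GzNull} supply the required hypotheses.
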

\begin{proof}
Given Lemmas \ref{lem:moments} and \ref{lem:GzNull}, the proof is model independent and follows line by line the proof of   \cite[Lemma  2.1]{Heyde11}. 
Mind that it is in this proof where we need $z$ to be complex-valued.
\end{proof}
As a consequence of Lemma \ref{lem:thetaBd} and \eqref{eqProof11} we get the following result:
\begin{corollary}
\label{cor:thetancn}
Under the conditions of Theorem \ref{thm:BubbleCond}, for any $\epsilon \in ((d-5)\wedge 2)$ 
$$
\hat c_n(0)= \big ( z_c \,  A(0) \big )^{-1} z_c^{-n} \big (1+O(n^{-\epsilon}) \big ).
$$
\end{corollary}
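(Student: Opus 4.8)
The plan is to read the statement off directly from the representation \eqref{eqProof11}, using Lemma \ref{lem:thetaBd} as the only non-trivial input. First I would recall from \eqref{eqProof11} that
\[
\hat c_n(0)=\frac1{z_c}\left(\frac{z_c^{-n}}{A(0)}-\theta_n(0)\right)=\frac{z_c^{-n}}{z_c\,A(0)}\Bigl(1-z_c^{\,n}\,A(0)\,\theta_n(0)\Bigr),
\]
so that the corollary is equivalent to the two facts $A(0)\in(0,\infty)$ and $z_c^{\,n}\theta_n(0)=O(n^{-\epsilon})$.

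Next I would check that $A(0)\in(0,\infty)$. Finiteness: by \eqref{eqProof3} we have $A(0)=1+\partial_z\hat\Pi_z(0)\big|_{z=z_c}=1+\sum_x\sum_{n\ge1}n\,\pi_n(x)\,z_c^{n-1}$, and this series converges absolutely since its $n\ge2$ part is bounded by \eqref{eq:TempDerBd} of Lemma \ref{lem:moments} (taken with $\varepsilon=0$) while the $n=1$ term is a finite sum controlled by the $N=1$ diagrammatic bound \eqref{eq:bounds2}. Positivity: from \eqref{eqProof7} together with $B(0)=0$ (immediate from \eqref{eqProof4} and $\hat D(0)=1$) we get $z_c\,\hat G_z(0)=\bigl([1-z/z_c]\,A(0)\bigr)^{-1}-\Theta_z(0)$; since $\hat G_z(0)=\chi(z)\to\infty$ as $z\uparrow z_c$ while $\Theta_z(0)$ stays bounded near $z_c$ (by \eqref{eqProof8} and the bound $E_z(0)=O(|z-z_c|^{\epsilon})$ coming from Lemma \ref{lem:moments}, cf.\ \cite[(2.64)]{Heyde11}), we must have $A(0)>0$. (Alternatively, $A(0)>0$ also follows by comparing the two estimates of Lemma \ref{lem:GzNull}.)

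Finally I would apply Lemma \ref{lem:thetaBd} at $k=0$, which gives $|\theta_n(0)|\le O(z_c^{-n}n^{-\epsilon})$ for the relevant range of $\epsilon$, hence $|z_c^{\,n}A(0)\,\theta_n(0)|\le O(n^{-\epsilon})$; substituting into the display above yields $\hat c_n(0)=(z_c\,A(0))^{-1}z_c^{-n}\bigl(1+O(n^{-\epsilon})\bigr)$, as claimed. The main point is that, at this stage, essentially no obstacle remains: all the analytic work is already done, sitting in the moment estimates of Lemma \ref{lem:moments} and in Lemma \ref{lem:thetaBd} (whose proof is the model-independent complex-analytic/Tauberian argument of \cite[Lemma 2.1]{Heyde11}). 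The only step deserving a line of care is verifying that $A(0)$ is strictly positive and finite, which is why I would spell out the argument via \eqref{eqProof7} and the divergence of the susceptibility.
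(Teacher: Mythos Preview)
Your proof is correct and follows exactly the paper's approach: the paper presents the corollary as an immediate consequence of Lemma~\ref{lem:thetaBd} and \eqref{eqProof11}, and you have simply spelled out the one-line computation together with the (already established) fact that $A(0)\in(0,\infty)$. The only extra care you take---verifying $A(0)>0$ via \eqref{eqProof7} and the divergence of the susceptibility---is consistent with what the paper uses implicitly in the proof of Lemma~\ref{lem:GzNull}.
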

\proof[Proof of Theorem \ref{thm:CritG}.]
Let $k\in\R^d$, and assume $n$ is large enough so that $k/\sqrt n \in [-\pi, \pi)^{d}$. Choose $\epsilon \in  (0,  \frac{d-5}{2})$, then 
\eqref{eqProof11} and Lemma \ref{lem:thetaBd} imply that 
\begin{eqnarray}\label{eq:Proof122}
    \frac{\hat c_n(k/\sqrt n)}{\hat c_n(0)}
    &=&\big(1+O(n^{-\varepsilon})\big)\frac{A(0)}{A(k/\sqrt n)+B(k/\sqrt n)}\left(\frac{A(k/\sqrt n)}{A(k/\sqrt n)+B(k/\sqrt n)}\right)^n+O(n^{-\varepsilon}).
\end{eqnarray}
Expand 
\begin{equation}\label{eqProof12}
	\frac{A(k/\sqrt n)}{A(k/\sqrt n)+B(k/\sqrt n)}
	=1-\frac{n(1-\hat D(k/\sqrt n))\,\big(A(k/\sqrt n)+B(k/\sqrt n)\big)^{-1}\,{B(k/\sqrt n)}(1-\hat D(k/\sqrt n))^{-1}}{n},
\end{equation} 
and note that, when $n\to\infty$.
\begin{eqnarray*}
	n(1-\hat D(k/\sqrt n)&\longrightarrow& \frac{1}{2d} \,|k|^2,\\
	A(k/\sqrt n)+B(k/\sqrt n)&\longrightarrow& A(0)=1+\sum_x\sum_{m\ge2}m\pi_m(x)z_c^{m-1},\\
	\frac{B(k/\sqrt n)}{1-\hat D(k/\sqrt n)}&\longrightarrow& 1+K_{z_c}/z_c 
\end{eqnarray*} 
(where we use \eqref{eq:Dk} for the first line, the definition of $A(k)$ and $B(k)$ for the second line and Lemma \ref{lem:PiNullPiK} for the last line),
which gives that ${\hat c_n(k/\sqrt n)}/{\hat c_n(0)}\to\exp\{-K|k|^2\}$ with 
\begin{equation}\label{eq:DefK}
	K^{-1}=  \frac{1}{2d} \left(1+\sum_x\sum_{m\ge2}m\pi_m(x)z_c^{m-1}\right)\left(1+K_{z_c}/z_c \right).
\end{equation} 
This concludes the proof of Theorem \ref{thm:CritG}.
\qed

\section{Convergence to Brownian motion} \label{sec:convergence}
In this section, we finally prove convergence of rescaled prudent walk to Brownian motion, i.e., we prove Theorem \ref{thm:ConvBM}. The proof consists of two parts: (i) convergence of finite-dimensional distributions and (ii) a tightness argument based on the moment estimates in Proposition \ref{prop-moments}. 
Given our previous sections,  the analysis here is standard and almost model independent,  hence we only sketch the main arguments. 
In order to prove Theorem \ref{thm:ConvBM}, it is sufficient to consider a \emph{separating class} of functions $f$. A convenient choice of separating functions is provided by the family
\[ g(x_1,\dots,x_N)=\exp\{ik\cdot(x_1,x_2-x_1,\dots,x_N-x_{N-1})\},\qquad n\in\N, x_1,\dots,x_N\in\Z^d,k\in(\R^d)^N.\]
For $\boldsymbol{n} = (n^{(1)}, \ldots, n^{(N)}) \in \mathbb{N}^{N}$, 
with $n^{(1)} < \ldots < n^{(N)}$ and $\boldsymbol{k}=(k^{(1)},\dots,k^{(N)})\in(\R^d)^N$
we define
\begin{equation}\label{defCN}
\hat{\boldsymbol{c}}_{ \boldsymbol{n}}( \boldsymbol{k}) = 
\sum\limits_{ w \in \mathcal{W}_{  n^N }   } \varphi^\lambda(w)  \exp \Big \{  i \sum\limits_{j=1}^{N} k^{(j)} \cdot  \big (w(n^{(j)}) - w(n^{(j-1)}  ) \big ) \Big \}.
\end{equation}
We also introduce the notation,
$$
\boldsymbol{k} \cdot \Delta w(\boldsymbol{n}) = \sum\limits_{j=1}^{N} k^{(j)} \cdot  \big (w(n^{(j)}) - w(n^{(j-1)}  )  \big ) .
$$
We now state our first lemma about convergence of finite dimensional distributions.
\begin{lemma}[Finite-dimensional distributions]  \label{lem:fdd}
Let $N\in\N$ and $\boldsymbol{k}=(k^{(1)},\dots,k^{(N)})\in(\R^d)^N$, let $0=t^{(0)}<t^{(1)}<\dots<t^{(N)}\le1$, and $g=(g_n)$ a sequence of real numbers satisfying $0\le g_n\le n^{-1/2}$,  define
$$
 \boldsymbol{T} = (  t^{(1)},  t^{(2)}, \ldots, t^{(N-1)},  T ), \quad \quad  \boldsymbol{k}_n =   \frac{\boldsymbol{k}}{\sqrt{n}},
$$
where $T = t^{(N)} (1 - g_n)$.
Under the conditions of Theorem \ref{thm:ConvBM} we have,
\begin{equation}\label{eqFinDimConv}
	\cnT{N}_{n\bT}\big( \boldsymbol{k}/\sqrt{n}\big)=\hat c_{n T}\big(0\big)\exp\left\{-K\,\sum_{j=1}^N|k^j|^2(t^{(j)} - t^{(j-1)} )\right\} \qquad\text{ as $n\to\infty$,}
\end{equation}
holds uniformly in $g$, 
\end{lemma}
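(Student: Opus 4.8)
The plan is to reduce the finite-dimensional statement to the one-point asymptotics already established in Theorem \ref{thm:CritG} (and its Fourier version \eqref{eqProof11}--\eqref{eqProof122}), by repeatedly applying the lace-expansion identity \eqref{eq:decompositionGx} to ``cut'' the walk at the intermediate times $n^{(1)},\dots,n^{(N-1)}$. Concretely, I would first express $\hat{\boldsymbol{c}}_{\boldsymbol{n}}(\boldsymbol{k})$ in terms of products of the two-point generating functions $\hat G_z$ evaluated at shifted arguments: passing to generating functions in the $N$ time-increments (with conjugate variables $z_1,\dots,z_N$), one obtains a product $\prod_{j=1}^N \hat G_{z_j}\big(k^{(j)}+\dots\big)$ up to correction terms coming from the $\Pi$-coefficients straddling the cut times. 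The key input here is that the lace-expansion diagrams $\Pi_z^{(N)}$ have finite temporal moments (Lemma \ref{lem:moments}, in particular \eqref{eq:TempDerBd}), so the ``straddling'' contributions are negligible after diffusive rescaling — this is exactly the mechanism by which finite-dimensional distributions factorize in the Hara--Slade/Madras--Slade framework, and I would follow \cite[Chapter 6.6]{MadraSlade93} or \cite[Section 2]{Heyde11} for the bookkeeping.

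Second, with the factorization in hand, I would apply the single-increment asymptotics. For each block of $n^{(j)}-n^{(j-1)}$ steps with Fourier argument $k^{(j)}/\sqrt n$, Corollary \ref{cor:thetancn} together with the expansion \eqref{eqProof122}--\eqref{eqProof12} gives that the block contributes a factor $\hat c_{n(t^{(j)}-t^{(j-1)})}(0)\exp\{-K|k^{(j)}|^2(t^{(j)}-t^{(j-1)})\}(1+O(n^{-\varepsilon}))$ as $n\to\infty$, using $n(1-\hat D(k/\sqrt n))\to|k|^2/(2d)$ from \eqref{eq:Dk} and the identification of $K$ in \eqref{eq:DefK}. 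Multiplying over $j=1,\dots,N$ and collecting the prefactors into $\hat c_{nT}(0)$ yields the right-hand side of \eqref{eqFinDimConv}. The reason the statement is phrased with $T=t^{(N)}(1-g_n)$ rather than $t^{(N)}$ is a purely technical one inherited from the tightness step that follows: one needs the convergence to hold uniformly over the perturbation $g_n\in[0,n^{-1/2}]$, and since $g_n\to 0$ the limit is unaffected — I would simply note that every $O(\cdot)$ estimate in the argument is uniform in $g_n$ because it depends on $g_n$ only through the block length $nT-n^{(N-1)}=n(t^{(N)}(1-g_n)-t^{(N-1)})$, which stays $\asymp n$.

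Third, I would handle the uniformity in $g$ explicitly: the only place $g_n$ enters is the last block, and since $t^{(N)}(1-g_n)\in[t^{(N)}-t^{(N)}n^{-1/2},\,t^{(N)}]$, for $n$ large this interval stays bounded away from $t^{(N-1)}$, so the error terms $O(n^{-\varepsilon})$ from Corollary \ref{cor:thetancn} and from Lemma \ref{lem:thetaBd} are bounded by a constant independent of the choice of $g$. The limiting exponent $\sum_{j=1}^N |k^{(j)}|^2(t^{(j)}-t^{(j-1)})$ uses $t^{(N)}$ rather than $T$, which is legitimate since $T\to t^{(N)}$ and the exponential is continuous.

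\textbf{Main obstacle.} I expect the main difficulty to be the first step — establishing that cutting the walk at the intermediate times $n^{(j)}$ genuinely decouples the blocks up to diffusively-negligible errors. In the self-avoiding-walk literature this rests on the fact that the expansion coefficients have finite temporal moments and decay summably in $z_c^n$; here the same is true by Lemma \ref{lem:moments}, but the $\Pi$-diagrams for prudent walk involve the indicator $\1$ along coordinate axes and so one must check that the relevant convolutions of $G_z^\lambda$ with $\1$ that appear when a lace straddles a cut time are still controlled — this is precisely where the prudent bubble condition $\sB^\lambda_{z_c}<C/d$ (Theorem \ref{thm:BubbleCond}) and the displacement bounds \eqref{eq:GdisplBd}--\eqref{eq:GGdisplBd} are needed. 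Once one verifies that these inputs suffice to run the Madras--Slade factorization argument verbatim, the rest is the routine asymptotic analysis sketched above; hence I would present this lemma's proof by carefully citing \cite[Chapter 6.6]{MadraSlade93} and \cite[Section 2]{Heyde11} and indicating only the substitutions ($\beta\mapsto\lambda/d$, bubble $\mapsto$ prudent bubble, ordinary indicator $\mapsto\1$) required to adapt them.
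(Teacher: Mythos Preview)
Your approach is essentially the same as the paper's: induction on $N$, cutting at the intermediate time and controlling the straddling error via the temporal moment bounds of Lemma~\ref{lem:moments}, exactly as in \cite[Chapter 6.6]{MadraSlade93} and \cite[Section 4]{Heyde11} (not Section 2). Two small corrections: the cutting tool is not the Fourier recursion \eqref{eq:decompositionGx} but the combinatorial identity $K_{[0,n]}=\sum_{I\ni m}K_{[0,I_1]}J_{[I_1,I_2]}K_{[I_2,n]}$ (cf.\ \eqref{eqKJKexpansion} in the paper), which is what actually decomposes the weight at a \emph{fixed} time; and the perturbation $g_n$ is not merely inherited from tightness---it is needed to close the induction itself, since after cutting at $I_1$ near $nt^{(N-1)}$ the first block has final time $I_1\neq nt^{(N-1)}$, so the induction hypothesis must be available for slightly perturbed endpoints.
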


Our next lemma shows tightness for the sequence  $\{X_n\}$:
\begin{lemma}[Tightness] \label{lem:tightness}
The sequence $\{X_n\}$ in (\ref{eq:defXn}) is tight in $D([0,1],\R^d)$. 
\end{lemma}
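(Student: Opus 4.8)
The plan is to verify the standard moment criterion for tightness in the Skorokhod space $D([0,1],\R^d)$ under the conditions of Theorem~\ref{thm:BubbleCond} (which guarantee that $K$ is well defined and that Proposition~\ref{prop-moments} and Corollary~\ref{cor:thetancn} apply); the very same computation is carried out for self-avoiding walk in \cite[Section~6.6]{MadraSlade93} and \cite[Section~2]{Heyde11}, and the underlying tightness theorem for the $J_1$-topology is \cite{Billi68}. Since $X_n(0)=0$ is deterministic, it suffices to produce a constant $C<\infty$, independent of $n$, with
\begin{equation}\label{eq:tight-criterion}
\big\langle\,|X_n(t)-X_n(s)|^2\,|X_n(u)-X_n(t)|^2\,\big\rangle_n\ \le\ C\,(u-s)^2\qquad\text{for }0\le s\le t\le u\le1 ;
\end{equation}
the exponent $2>1$ on the right makes the criterion applicable with the continuous rate function $r\mapsto\sqrt C\,r$. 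Setting $\ell_1=\lfloor ns\rfloor\le\ell_2=\lfloor nt\rfloor\le\ell_3=\lfloor nu\rfloor\le n$, the left-hand side of \eqref{eq:tight-criterion} equals $(Kn)^{-2}\big\langle|w(\ell_2)-w(\ell_1)|^2|w(\ell_3)-w(\ell_2)|^2\big\rangle_n$ with $K$ as in \eqref{eq:DefK}. If $u-s<1/n$ then $\ell_3-\ell_1\le1$, so at most one of the two increments is non-zero and \eqref{eq:tight-criterion} is trivial; otherwise $\ell_3-\ell_1\le2n(u-s)$, and it remains to bound the unnormalised sum $\sum_{w\in\Wa_n}\phi^\lambda(w)\,|w(\ell_2)-w(\ell_1)|^2\,|w(\ell_3)-w(\ell_2)|^2$.

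The key observation is that the restriction of a (weakly) prudent walk to any subinterval is again (weakly) prudent, and that every factor $1+\lambda U_{st}(w)$ lies in $[0,1]$; hence, dropping the factors $1+\lambda U_{st}(w)$ for which $s$ and $t$ lie in different blocks of the partition $[0,\ell_1]\cup[\ell_1,\ell_2]\cup[\ell_2,\ell_3]\cup[\ell_3,n]$ only increases the weight. This yields the factorisation
\begin{equation}\label{eq:tight-block}
\sum_{\substack{w\in\Wa_n\,:\,w(\ell_i)=x_i,\\ i=1,2,3}}\phi^\lambda(w)\ \le\ c^\lambda_{\ell_1}(0,x_1)\,c^\lambda_{\ell_2-\ell_1}(x_1,x_2)\,c^\lambda_{\ell_3-\ell_2}(x_2,x_3)\sum_{x_4\in\Z^d}c^\lambda_{n-\ell_3}(x_3,x_4) .
\end{equation}
Multiplying \eqref{eq:tight-block} by $|x_2-x_1|^2|x_3-x_2|^2$ and summing over $x_1,x_2,x_3\in\Z^d$, translation invariance turns the last sum into $c^\lambda_{n-\ell_3}$, the sum over $x_3$ into $\sum_z|z|^2c^\lambda_{\ell_3-\ell_2}(z)$, the sum over $x_2$ into $\sum_z|z|^2c^\lambda_{\ell_2-\ell_1}(z)$, and the sum over $x_1$ into $c^\lambda_{\ell_1}$. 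By the endpoint second-moment estimate of Proposition~\ref{prop-moments} there is $C'<\infty$ with $\sum_z|z|^2c^\lambda_m(z)\le C'\,m\,c^\lambda_m$ for all $m$ (the asymptotics give this for large $m$, and the finitely many small values are handled by $|w(m)|\le m$), so the unnormalised sum is at most $(C')^2(\ell_2-\ell_1)(\ell_3-\ell_2)\,c^\lambda_{\ell_1}c^\lambda_{\ell_2-\ell_1}c^\lambda_{\ell_3-\ell_2}c^\lambda_{n-\ell_3}$.

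Finally, I would divide by the normalisation $c^\lambda_n$ of $\langle\cdot\rangle_n$: the asymptotics $c^\lambda_m=\hat c^\lambda_m(0)\sim(z_cA(0))^{-1}z_c^{-m}$ from Corollary~\ref{cor:thetancn}, together with the finitely many small values of $m$, give $c^\lambda_m\le C'' z_c^{-m}$ for all $m$, while submultiplicativity $c^\lambda_{m+m'}\le c^\lambda_mc^\lambda_{m'}$ and Fekete's lemma give $c^\lambda_m\ge z_c^{-m}$; hence $c^\lambda_{\ell_1}c^\lambda_{\ell_2-\ell_1}c^\lambda_{\ell_3-\ell_2}c^\lambda_{n-\ell_3}\le(C'')^4z_c^{-n}\le(C'')^4\,c^\lambda_n$. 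Putting the pieces together, $\big\langle|w(\ell_2)-w(\ell_1)|^2|w(\ell_3)-w(\ell_2)|^2\big\rangle_n\le C_0(\ell_2-\ell_1)(\ell_3-\ell_2)\le C_0(\ell_3-\ell_1)^2/4$, and with $\ell_3-\ell_1\le2n(u-s)$ and the prefactor $(Kn)^{-2}$ this is \eqref{eq:tight-criterion}. The same argument applies verbatim to the weakly prudent walk under the hypotheses of Theorem~\ref{thm:ConvBMgen}, the only model-specific inputs being Proposition~\ref{prop-moments} and the growth rate $c^\lambda_n\asymp z_c^{-n}$. I expect the only mildly delicate point to be invoking the precise form of the moment criterion adapted to the $J_1$-topology and disposing cleanly of the floor-function discretisation and of the right endpoint $u=1$; everything else is the routine subadditivity and summation bookkeeping sketched above.
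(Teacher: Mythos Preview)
Your approach is essentially the paper's: factorise the weight over the four blocks by dropping cross-block factors $1+\lambda U_{st}$, bound the endpoint moments via Proposition~\ref{prop-moments}, and control $c^\lambda_{\ell_1}c^\lambda_{\ell_2-\ell_1}c^\lambda_{\ell_3-\ell_2}c^\lambda_{n-\ell_3}/c^\lambda_n$ using Corollary~\ref{cor:thetancn}. The only substantive difference is your choice of exponent.

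There is a small but genuine gap. You invoke Proposition~\ref{prop-moments} for $r=2$, writing $\sum_z|z|^2c^\lambda_m(z)\le C'm\,c^\lambda_m$, but the proposition is stated only for $r\in(0,2)$; the Tauberian argument in its proof does not immediately extend to the endpoint $r=2$. The paper avoids this by taking $r=\tfrac32$: then Proposition~\ref{prop-moments} applies as stated, one obtains $\big\langle|X_n(t_2)-X_n(t_1)|^{3/2}|X_n(t_3)-X_n(t_2)|^{3/2}\big\rangle_n\le C(t_3-t_1)^{3/2}$, and since $\tfrac32>1$ the Billingsley moment criterion still yields tightness. Your argument goes through verbatim once you replace the exponent $2$ by any $r\in(1,2)$.
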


These lemmas enable us to prove our main theorem.  
\begin{proof}[Proof of Theorem \ref{thm:ConvBM}]
The asserted convergence in distribution is implied by convergence of finite-dimensional distributions and tightness of the sequence $X_n$, see e.g.\ \cite[Theorem 15.1]{Billi68}. Hence, the result follows from Lemma \ref{lem:fdd}, the observation that the family $g$ is separating, and the tightness in Lemma \ref{lem:tightness}. 
\end{proof}

The proof of these lemmas follows closely that of \cite[Thm.\ 1.6]{Heyde11} and \cite[Corollary 5.2]{Heyde11}. We only explain the main modifications here. 

\begin{proof}[Sketch proof of Lemma \ref{lem:fdd}.]
We follow \cite[Section 4]{Heyde11}.
The proof is via induction over $N$, and is very much inspired by the proof of \cite[Theorem 6.6.2]{MadraSlade93}, where finite-range models were considered.
The flexibility in the last argument of $n\bT$ is needed to perform the induction step.
We shall further write $nt^{\sss (j)}$ and $nT$ instead of $\lfloor nt^{\sss (j)}\rfloor$ and $\lfloor nT\rfloor$ for brevity.

To initialize the induction we consider the case $N=1$.
Since $\cnT{1}_{n\bT}(\bk_n)=\hat c_{nT}(k^{\sss(1)}_n)$, the assertion for $N=1$ is a minor generalization of Theorem \ref{thm:CritG}.
In fact, if we replace $n$ by $nT$, then instead of \eqref{eq:Dk} we have
\begin{equation}\label{eqFinDimProof0}
    nT\, \Big [1-\hat D\Big( \frac{k^{(1)}}{\sqrt{n}}\Big) \Big ]
    \rightarrow \frac{1}{2d}
    |k^{(1)}|^2
    \quad\text{as $n\to\infty$}.
\end{equation}
With an appropriate change in \eqref{eq:Proof122} we obtain \eqref{eqFinDimConv} for $N=1$ from Theorem \ref{thm:CritG}.

To advance the induction we assume that \eqref{eqFinDimConv} holds when $N$ is replaced by $N-1$.
Recalling Definition \ref{def:graphslaces}, we define for an $n$-step walk $w\in\cW_n$ and $0\le a\le b\le n$,
\begin{equation}\label{eqDefKw}
K_{[a,b]}(w):= \sum\limits_{\Gamma \in \mathcal{B}[a,b]} 
 \prod_{s, t \in [a,b]    }   \lambda U_{s,t}(w),
\end{equation}
so that $K_{[0,n]}(w) = \varphi(w)$. 
Recalling the definition of  $J_{[a,b]}(w)$ in Section \ref{sec:laceexpansion}, we obtain
\begin{equation}\label{eqDefJpi}
    \sum_{w\in\cW_n(x)} \varphi(w)J_{[0,n]}(w)=\pi_n(x),
\end{equation}
and, for any integers $0\le m\le n$ and $w\in\cW_n$,
\begin{equation}\label{eqKJKexpansion}
    K_{[0,n]}(w)=\sum_{I\ni m}K_{[0,I_1]}(w)\,J_{[I_1,I_2]}(w)\,K_{[I_2,n]}(w),
\end{equation}
where the sum is over all intervals $I=[I_1,I_2]$ of integers with either $0\le I_1<m<I_2\le n$ or $I_1=m=I_2$.
We refer to \cite[(3.13)]{SladeBook} for \eqref{eqDefJpi},
and to \cite[Lemma 5.2.5]{MadraSlade93} for \eqref{eqKJKexpansion}.
By \eqref{defCN} and \eqref{eqKJKexpansion},
\begin{equation}\label{eqFinDimProof1}
    \cnT{N}_{n\bT}(\bk_n)
    =\sum_{I\ni nt^{\sss (N-1)}}
        \sum_{w\in \cW_{nT}}
        \e^{i\bk_n\cdot\Delta w(n\bT)}
        \varphi(w)\;
        K_{[0,I_1]}(w)\,J_{[I_1,I_2]}(w)\,K_{[I_2,nT]}(w).
\end{equation}

We may restrict our attention to intervals $I$ with length $|I|=I_2-I_1\le \sqrt{n}$, as contributions from longer intervals turn out to be negligible. Further assuming that $n$ be sufficiently large so that $(nt^{\sss (N-1)}-nt^{\sss (N-2)})\vee(nt^{\sss (N)}-nt^{\sss (N-1)})\ge \sqrt{n}$, we may write the remaining contribution as 
\begin{equation}\label{eqFinDimProof2}
\begin{split}
    &\sum_{\substack{I\ni \,nt^{\sss (N-1)}\\|I|\le \sqrt{n}}}
        \cnT{N-1}_{(nt^{\sss(1)},\dots,nt^{\sss(N-2)},I_1)}
        \big(k_n^{\sss (1)},\dots,k_n^{\sss (N-1)}\big)\;
        \times\;\hat c_{nT-I_2}(k_n^{\sss (N)})\\
        &\quad{}\times\!\!\sum_{w\in \cW_{|I|}}\!
        \exp\!{\left\{ik_n^{\sss (N-1)}\cdot\, w_{nt^{\sss(N-1)}-I_1}
                +ik_n^{\sss (N)}\cdot\, (w_{I_2-I_1}-w_{nt^{\sss(N-1)}-I_1})\right\}}\;
        \varphi(w)\,
        J_{[0,|I|]}(w).
\end{split}
\end{equation}
For the two factors in the first line we can use the induction hypothesis, which yields the desired expression.
For the second line in \eqref{eqFinDimProof2}  we use $e^y=1+ O(|y|)$ and \eqref{eqDefJpi} to rewrite the second line as
\begin{equation}\label{eqFinDimProof3}
    \sum_x
        \left(1+O(|x|)\right) \pi_{|I|}(x).
\end{equation}
This latter contribution can be shown to be negligible using the strong moment bounds in Lemma 
\ref{lem:moments}. For details of the argument we refer to \cite[Thm.\ 6.6.2]{MadraSlade93} or \cite[Section 4]{Heyde11}.

\end{proof}

\begin{proof}[Sketch proof of Lemma \ref{lem:tightness}.]
We now prove tightness of the sequence $X_n$,
which is implied by the tightness criterion, 
 \cite[Proposition 5.1]{Heyde11}.
 The only thing we need to prove is 
  \cite[(5.1)]{Heyde11}.
Let us fix  $r=\frac32$.
Again we write $nt$ for $\lfloor nt \rfloor$, for brevity.
The left hand side of   \cite[(5.1)]{Heyde11}  can be written as
\begin{equation}\label{eqTightness2}
    \frac{1}{c_n n^{r} \, K^{r}}\sum_{w\in\cW_n}
    |w(nt_2)-w(nt_1)|^r\,|w(nt_3)-w(nt_2)|^r \, K_{[0,n]}(w),
\end{equation}
where $K_{[0,n]}(w)$ was defined in \eqref{eqDefKw}.
Since
\begin{equation}
    K_{[0,n]}(w)\le K_{[0,nt_1]}(w)\,K_{[nt_1,nt_2]}(w)\,K_{[nt_2,nt_3]}(w)\,K_{[nt_3,n]}(w)
\end{equation}
and, by Corollary \ref{cor:thetancn},
\begin{equation}
    c_n^{-1}\le O(1)\;c_{nt_1}^{-1}\,c_{nt_2-nt_1}^{-1}\,c_{nt_3-nt_2}^{-1}\,c_{n-nt_3}^{-1},
\end{equation}
we can bound \eqref{eqTightness2} from above by
\begin{equation}
\begin{split}
 \quad O(1)\,  
  \frac{1}{n^r} 
 \left(\xi^{(r)}(nt_2-nt_1)\right)^r\,\left(\xi^{(r)}(nt_3-nt_2)\right)^r,
\end{split}
\end{equation}
where
$$
\xi^{(r)}(n) = \Big(\frac{1}{c_n}\sum_x |x|^r c_n(x)\Big)^{1/r}.
$$
By Proposition \ref{prop-moments} we obtain,
\begin{equation}
    \left(\xi^{(r)}(nt^\ast-nt_\ast)\right)^r
    \le O(1)\,  n^{\frac{r}{2}} (t^\ast-t_\ast)^{r/2}
\end{equation}
for any $0\le t_\ast<t^\ast\le1$,
so that
\begin{equation}
    \left\langle|X_n(t_2)-X_n(t_1)|^{r}\,|X_n(t_3)-X_n(t_2)|^{r}\right\rangle_n
    \le O(1) \,(t_3-t_1)^{r}.
\end{equation}
This proves tightness of the sequence $\{X_n\}$.
\end{proof}

\appendix

\section{A random walk bound}
\begin{lemma}\label{lem:boundDind}
For $d>2$ and every  $n \geq 1$,
$$
\frac{1}{(2 \pi)^d} \int_{ [- \pi, \pi]^d} \dd k \,  \hat{D}(k)^n \, \hat{\1^0}(k)  \leq 
O(  n^{- \frac{d-1}{2}  }  ).
$$
\end{lemma}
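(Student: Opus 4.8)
The plan is to exploit the product structure of $\hat D(k)^n \hat{\1^0}(k)$ after recalling that $\hat{\1^0}(k)=\sum_{i=1}^d \hat{\1^0}^{,i}(k)$, where each summand is supported (in ``direction'') on a single coordinate axis. Concretely, by the definition of $\1^0$ in \eqref{ourindicator} (or rather its integrable analogue used in the body, whose Fourier transform is finite and explicit, cf.\ \eqref{eq:Fourierindicatori} with the role of $R$ replaced by a fixed cutoff), we have $\1^0(x)=\frac1d$ for $x\in e_j\Z\setminus\{0\}$ and $0$ otherwise, so that, by symmetry among the coordinates,
\begin{equation}\label{eq:Dind-symmetry}
\frac1{(2\pi)^d}\int_{[-\pi,\pi]^d}\dd k\,\hat D(k)^n\,\hat{\1^0}(k)
= \frac1{(2\pi)^d}\int_{[-\pi,\pi]^d}\dd k\,\hat D(k)^n\,\Big(\sum_{j=1}^d \tfrac1d\,\widehat{(\mathbf 1_{e_j\Z\setminus\{0\}})}(k)\Big).
\end{equation}
Since $\widehat{(\mathbf 1_{e_j\Z\setminus\{0\}})}(k)=\big(\sum_{w\in\Z}e^{iwk_j}\big)-1$ is, as a distribution, $2\pi\sum_{m\in\Z}\delta(k_j-2\pi m)-1$, integrating against it in the $k_j$-variable sets $k_j=0$ (and subtracts a harmless lower-order term); hence, up to constants, the left-hand side of \eqref{eq:Dind-symmetry} is controlled by
\[
\frac1{(2\pi)^{d-1}}\int_{[-\pi,\pi]^{d-1}}\dd\tilde k\;\hat D\big((\tilde k,0)\big)^n
=\mathbb P\big(S_n^{(d)}\in e_d\Z\big),
\]
i.e.\ the probability that $d$-dimensional simple random walk after $n$ steps has returned to the last coordinate axis.

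The next step is to estimate this return-to-axis probability. Writing $S_n^{(d)}=(S_n^{(1)},\dots,S_n^{(d)})$ and conditioning on the number of steps taken in each coordinate direction, the first $d-1$ coordinates form (after the appropriate time change) independent one-dimensional walks that must simultaneously return to $0$; each such event has probability $O(m^{-1/2})$ when $m$ steps are devoted to it, and a standard concentration/counting argument (the number of steps in the first $d-1$ directions is $\Theta(n)$ with overwhelming probability) yields
\[
\mathbb P\big(S_n^{(d)}\in e_d\Z\big)=\mathbb P\big(S_n^{(1)}=\dots=S_n^{(d-1)}=0\big)=O\big(n^{-(d-1)/2}\big).
\]
Equivalently, and perhaps cleaner to write up, one uses the Fourier representation directly: $\hat D((\tilde k,0))=\tfrac1d\big(1+\sum_{i=1}^{d-1}\cos\tilde k_i\big)$, so $|\hat D((\tilde k,0))|^n$ is concentrated near $\tilde k=0$ where $\hat D((\tilde k,0))=1-\tfrac1{2d}|\tilde k|^2+O(|\tilde k|^4)$; splitting the integral into $|\tilde k|\le n^{-1/2+\epsilon}$ (Gaussian approximation giving the main term $\asymp (n/d)^{-(d-1)/2}$) and $|\tilde k|>n^{-1/2+\epsilon}$ (where $|\hat D|\le e^{-cn^{2\epsilon}/d}\cdot$const, super-polynomially small, using that $\hat D((\tilde k,0))<1-\tfrac1d$ away from the origin in the $\tilde k$-sphere minus the bad points $k_i\in\{0,\pm\pi\}$, which are lower-dimensional) gives the claimed $O(n^{-(d-1)/2})$.

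The main obstacle is bookkeeping of constants rather than a conceptual difficulty: one must be careful that the $\delta$-function heuristic for $\widehat{\mathbf 1_{e_j\Z\setminus\{0\}}}$ is made rigorous (this is exactly why the body works with the integrable cutoff $\1^0_R$ of \eqref{def:1-Rapprox} and takes $R\to\infty$ by monotone convergence, cf.\ the proof of Lemma \ref{lem:equi510}), and that the tail estimate for $\hat D$ away from the axis uses only that $d-1\ge2$, which is guaranteed by the hypothesis $d>2$. I expect the write-up to mirror closely the one-dimensional local CLT estimates already invoked from \cite{MadraSlade93} (their (A.27) and the surrounding computations), so no genuinely new estimate is needed. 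One then sums: the full left-hand side is $O(n^{-(d-1)/2})$, as asserted.
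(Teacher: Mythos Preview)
Your approach is correct and matches the paper's proof in its essential structure: both exploit the coordinate symmetry of $\hat{\1^0}$ and the $R$-cutoff regularization to reduce the $d$-dimensional integral to the $(d-1)$-dimensional quantity $\int_{[-\pi,\pi]^{d-1}}\frac{\dd\tilde k}{(2\pi)^{d-1}}\big(\tfrac1d(1+\sum_{i=1}^{d-1}\cos\tilde k_i)\big)^n$, which is then bounded by $O(n^{-(d-1)/2})$ via local-CLT type estimates.

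The only notable difference is in how that last integral is controlled. The paper expands $(1+\sum\cos\tilde k_i)^n$ by the binomial theorem to separate the constant from the cosines, quotes the known bound $\int(\hat D^{(d-1)})^k\le Ck^{-(d-1)/2}$ for the $(d-1)$-dimensional simple random walk (their reference to \cite[(2.61)]{Heyde11}), and then splits the resulting sum over $k$ into $k<\epsilon n$ and $k\ge\epsilon n$. Your proposal instead handles the integral directly by a Laplace-method split near and away from $\tilde k=0$. Both are standard; your route is arguably more self-contained, while the paper's is slightly more mechanical since it just cites an existing estimate. One small cleanup in your version: because of the constant $+\tfrac1d$ term, $|\hat D((\tilde k,0))|$ attains $1$ \emph{only} at $\tilde k=0$ (there is no bad behaviour at $\tilde k_i=\pm\pi$), so the ``lower-dimensional bad set'' caveat can be dropped.
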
 
\begin{proof}Let $\epsilon>0$.
Using symmetry we obtain that, 
\begin{align*}
\int_{ [- \pi, \pi]^d}  \frac{\dd k}{(2 \pi)^d} \,  \hat{D}(k)^n \, \hat{\1^0}(k) &   =
d
\lim\limits_{R \rightarrow \infty} 
\frac{1}{(2 \pi)^d} \int_{ [- \pi, \pi]^d} \dd k \,   \frac{1}{d^n} \Big (   \sum\limits_{i=1}^{d} \cos( k_i)   \Big  )^n   \sqrt{ R \pi  } e^{  - k_d^2 R  } \\
&   \leq
d 
 \int_{ [- \pi, \pi]^{d-1}}\frac{ \dd k^{d-1}}{(2 \pi)^{d-1}} \,   \frac{1}{d^n} \Big (   \sum\limits_{i=1}^{d-1} \cos( k_i)  +1  \Big  )^n   \\
& =
d  \, 
\sum\limits_{k=0}^{n} \,  \binom{ n  }{ k }  \,  \frac{1}{d^{n-k}} \Big( \frac{ d-1  }{d}\Big) ^k
 \, \int_{ [- \pi, \pi]^{d-1}} \frac{ \dd k^{d-1}}{(2 \pi)^{d-1}} \,   \frac{1}{(d-1)^k} \Big (   \sum\limits_{i=1}^{d-1} \cos( k_i)  \Big  )^k \\
& \leq  C
d  \, 
\sum\limits_{k=0}^{n} \,  \binom{ n  }{ k }  \,  \frac{1}{d^{n-k}} \Big( \frac{ d-1  }{d}\Big) ^k k^{  -(d-1)/2  } \\
& 
\leq  C
d  \,  { \Big ( \frac{2}{d^{1-\epsilon}   } \Big ) }^n \, + \,  \frac{1}{(\epsilon n)^{\frac{d-1}{2}}}.
\end{align*} 
where for the second inequality we used (2.61) of \cite{Heyde11}, $C$ is some positive constant. The lemma follows by choosing $\epsilon \in (0, 1)$ appropriately. 
\end{proof}

\section{Moment estimates}\label{sec:moments} 

We prove a technical result about the control of the moments:

\begin{proposition}\label{prop-moments}
Under the assumptions of Theorem \ref{thm:BubbleCond}, there exists constants $C_1,C_2>0$ such that for all $n\in\N$ and all $r\in(0,2)$, 
\begin{equation}\label{eq:MomentBound}
	C_1\sqrt n\le \Big(\frac{1}{c_n}\sum_x |x|^r c_n(x)\Big)^{1/r}\le C_2 \sqrt n.
\end{equation}
\end{proposition}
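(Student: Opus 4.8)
The plan is to transcribe the argument of \cite[Cor.\ 5.2]{Heyde11} (which adapts \cite[Sec.\ 6.6]{MadraSlade93}); nothing specific to prudent walk enters beyond Lemmas \ref{lem:moments}--\ref{lem:thetaBd} and Corollary \ref{cor:thetancn}. Write $\langle\,\cdot\,\rangle_n$ for the uniform measure on $n$-step prudent walks and $X:=w(n)$ for the endpoint, so that the quantity in \eqref{eq:MomentBound} is $\langle|X|^r\rangle_n^{1/r}$. Two soft facts organize the proof: (a) $r\mapsto\langle|X|^r\rangle_n^{1/r}$ is non-decreasing on $(0,\infty)$ by the power-mean inequality; (b) Theorem \ref{thm:CritG} says $\langle e^{ik\cdot X/\sqrt n}\rangle_n=\hat c_n(k/\sqrt n)/\hat c_n(0)\to e^{-K|k|^2}$, so by L\'evy's continuity theorem $X/\sqrt n$ converges in distribution under $\langle\,\cdot\,\rangle_n$ to a centred Gaussian with covariance $2K\,\mathrm{Id}$. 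By (a) it is enough to bound $\langle|X|^2\rangle_n$ from above, and $\langle|X|\rangle_n$ together with $\exp(\langle\log|X|\rangle_n)$ from below.

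For the upper bound I would compute $\langle|X|^2\rangle_n$ by differentiating \eqref{eqProof11} at $k=0$: since $c_n$ has finite support, $\hat c_n$ is a trigonometric polynomial, and coordinate symmetry gives $\langle|X|^2\rangle_n=-d\,\partial_{k_j}^2[\hat c_n(k)/\hat c_n(0)]|_{k=0}$ for any fixed $j$. Writing $\hat c_n(k)/\hat c_n(0)=\big(\tfrac{A(0)}{A(k)+B(k)}\alpha(k)^n-\tau_n(k)\big)/(1-\tau_n(0))$ with $\alpha:=A/(A+B)$ and $\tau_n:=z_c^nA(0)\theta_n$ (so $\tau_n=O(n^{-\varepsilon})$ uniformly in $k$ by Lemma \ref{lem:thetaBd} and Corollary \ref{cor:thetancn}), and using that by \eqref{eqProof3}--\eqref{eqProof4}, Lemma \ref{lem:PiNullPiK} and Lemma \ref{lem:moments} the functions $A,B$ are even and smooth at $0$ with $A(0)>0$, $B(0)=0$, $\partial_{k_j}^2B(0)=\tfrac1d(1+K_{z_c}/z_c)>0$, one gets $\partial_{k_j}^2\alpha(0)=-\partial_{k_j}^2B(0)/A(0)$, so the main term contributes $-\tfrac{1+K_{z_c}/z_c}{dA(0)}\,n+O(1)$. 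The error $\tau_n$ is lower order: the spatial moment bound \eqref{eq:SpaceDerBd} makes $\partial_{k_j}^2\hat\Pi_z(0)=-\tfrac1d\sum_x|x|^2\sum_m\pi_m(x)z^m$ finite and H\"older at $z_c$, which keeps the pole of $\partial_{k_j}^2\Theta_z(0)$ at $z_c$ strictly below order $2$, and a Darboux/Cauchy estimate (as in Lemma \ref{lem:thetaBd}) then gives $\partial_{k_j}^2\theta_n(0)=O(z_c^{-n}n^{1-\delta'})$ for some $\delta'>0$, i.e.\ $\partial_{k_j}^2\tau_n(0)=o(n)$. Consequently $\langle|X|^2\rangle_n=\tfrac{1+K_{z_c}/z_c}{A(0)}\,n\,(1+o(1))\le Cn$ for all $n$, and by (a), $\langle|X|^r\rangle_n^{1/r}\le\langle|X|^2\rangle_n^{1/2}\le\sqrt C\,\sqrt n$ for every $r\in(0,2)$.

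For the lower bound, (b) yields $\langle\mathbf{1}_{\{|X|\ge\sqrt n\}}\rangle_n\ge p_0:=\tfrac12\,\mathbb P(|\mathcal N(0,2K\mathrm{Id})|\ge1)>0$ for $n$ large, so $\langle|X|\rangle_n\ge\sqrt n\,\langle\mathbf{1}_{\{|X|\ge\sqrt n\}}\rangle_n\ge p_0\sqrt n$; by (a) this already gives the lower bound in \eqref{eq:MomentBound} with $C_1=p_0$ for $r\in[1,2)$. For $r\in(0,1)$, (a) gives $\langle|X|^r\rangle_n^{1/r}\ge\lim_{r'\downarrow0}\langle|X|^{r'}\rangle_n^{1/r'}=\exp(\langle\log|X|\rangle_n)$, so it remains to show $\langle\log|X|\rangle_n\ge\tfrac12\log n-C$. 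The upper estimate $\langle\log|X|\rangle_n\le\tfrac12\log\langle|X|^2\rangle_n\le\tfrac12\log(Cn)$ is Jensen plus the previous paragraph; for the lower estimate write $\langle\log|X|\rangle_n=\tfrac12\log n-\tfrac12\langle\log(n/|X|^2)\rangle_n$ and observe $\langle\max\{0,\log(n/|X|^2)\}\rangle_n=2\int_1^{\sqrt n}s^{-1}\langle\mathbf{1}_{\{|X|<s\}}\rangle_n\,\mathrm ds$, which is $O(1)$ once one has a small-ball bound $\langle\mathbf{1}_{\{|X|<s\}}\rangle_n\le C(s/\sqrt n)^\gamma$ ($1\le s\le\sqrt n$, some $\gamma>0$); this last input is obtained exactly as in \cite[proof of Cor.\ 5.2]{Heyde11}, combining (b) with the control on intermediate scales coming from the critical Green's-function estimates of Section \ref{sec:bootstrap}.

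The hard part is the uniform-in-$n$ content hidden in the second-moment computation---chiefly the bound $\partial_{k_j}^2\theta_n(0)=o(n)$, which forces one to track the $z$-regularity of the second $k$-derivatives of $\hat\Pi$ through the joint space--time moment estimates underlying Lemma \ref{lem:moments}---and, for the small-$r$ part of the lower bound, the small-ball estimate. If one settles for constants depending on $r$ (which suffices for the only application, Lemma \ref{lem:tightness}, where $r=3/2$), the small-ball digression disappears and the lower bound follows directly from \eqref{eqProof11} and Theorem \ref{thm:CritG}; the whole argument is then a faithful, model-independent transcription of \cite{Heyde11}.
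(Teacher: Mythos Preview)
Your route is workable in outline but differs from the paper on both halves, and in each case the paper's argument is shorter and avoids the auxiliary inputs you flag as ``hard''.

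\textbf{Lower bound.} The paper does \emph{not} pass through tail or small-ball probabilities. It uses the elementary inequality $1-\cos t\le C|t|^r$ (valid for all $r\in(0,2]$) together with Theorem~\ref{thm:CritG}: for $k=e_1$,
\[
0<b\le 1-\frac{\hat c_n(k/\sqrt n)}{\hat c_n(0)}
=\sum_x\big[1-\cos(x_1/\sqrt n)\big]\frac{c_n(x)}{c_n}
\le \frac{C}{n^{r/2}}\sum_x|x|^r\frac{c_n(x)}{c_n},
\]
which gives the lower bound uniformly in $r\in(0,2)$ in one line. Your detour through $\exp\langle\log|X|\rangle_n$ and a small-ball estimate is unnecessary (and the small-ball input you cite is not actually proved in the paper).

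\textbf{Upper bound.} Here you differentiate \eqref{eqProof11} at $k=0$ and reduce everything to $\partial_{k_j}^2\theta_n(0)=o(n)$, which in turn needs a \emph{joint} space--time moment bound of the type $\sum_x\sum_m m\,|x|^2\,|\pi_m(x)|\,z_c^m<\infty$ (so that $\partial_{k_j}^2E_z(0)$ is controlled near $z_c$). Lemma~\ref{lem:moments} supplies the temporal and spatial moments separately, but not their product; you correctly note this gap and defer it. The paper sidesteps this entirely by working with the generating function
\[
H_{z,r}=\sum_x\sum_{n\ge0}|x_1|^r c_n(x)z^n
=\sum_x\sum_{n\ge0}\int_0^\infty\frac{du}{u^{1+r}}\big[1-\cos(u x_1)\big]c_n(x)z^n,
\]
splitting the $u$-integral at $(z_c-z)^{1/2}$, using only $\hat G_z(0)\asymp(z_c-z)^{-1}$ and Lemma~\ref{lem:PiNullPiK} to get $H_{z,r}\le C(z_c-z)^{-1-r/2}$, and then applying a Tauberian theorem. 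This uses nothing beyond what is already proved in Sections~\ref{sec:bootstrap}--\ref{sec:CritG} and gives the upper bound for every $r\in(0,2)$ directly, without computing second moments or controlling $\partial_k^2\theta_n$.

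In short: your strategy is conceptually fine and would go through once the joint moment estimate is supplied, but the paper's proof is both simpler and self-contained.
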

\begin{proof}
The lower bound in \eqref{eq:MomentBound} is straightforward: we get from \eqref{eq:fcnConv} that $1-\hat c_n(k/\sqrt n)/\hat c_n(0)\to1-\exp{\big (-K|k|^2 \big )}$, and this limit is strictly positive whenever $k\neq0$. Thus for $k=(1,0,\dots,0)$, there exists $b>0$ such that for all $n$,
\[ 	b\le 1-\frac{\hat c_n(k/\sqrt n)}{\hat c_n(0)}
	=\sum_x\big[1-\cos(x_1/\sqrt n)\big]\frac {c_n(x)}{c_n}
	\le \sum_x\frac{|x_1|^r}{n^{r/2}}\frac {c_n(x)}{c_n}
	\le \frac1{n^{r/2}}\sum_x|x|^r\frac {c_n(x)}{c_n}
\]
using $1-\cos t\le |t|^r$ whenever $r\le2$. 

For the upper bound in \eqref{eq:MomentBound}, we consider the generating function 
\begin{equation}\label{eq:DefHzr}
	H_{z,r}
	:=\sum_x\sum_{n=0}^\infty|x_1|^r c_n(x)z^n
	=\sum_x\sum_{n=0}^\infty\int_0^\infty\frac{du}{u^{1+r}}\big[1-\cos(u\cdot x_1)\big]c_n(x)z^n, 
\end{equation}
determine its behaviour as $z\nearrow z_c$, and then employ a suitable Tauberian theorem. 
We split the integral of $u$ in \eqref{eq:DefHzr} in two parts, one for $u\le (z-z_c)^{1/2}$ and the other for $u> (z-z_c)^{1/2}$. 
We use that
\begin{itemize}
\item $\hat G_z(0)=O(z-z_c)^{-1}$  
\item $\hat{\Pi}_z(0)- \hat{\Pi}_z(u)=(\frac{1}{2d} + o(1))\,(1-\hat{D}(u))$,
\end{itemize}
which follows respectively from Lemma  \ref{lem:GzNull}
and Lemma \ref{lem:PiNullPiK}.
Then the arguments in \cite[Section 3]{Heyde11} give that $|H_{z,r}|\le(z_c-z)^{-1-r/2}$, and a Tauberian theorem (e.g.\ \cite[Theorem 5]{FlajoOdlyz90}) consequently gives that 
\[ \sum_x |x_1|^r c_n(x)\le n^{r/2} z_c^{-n}, \]
from which the result follows using Theorem \ref{thm:CritG}. 
\end{proof}


  \section*{Acknowledgments}
This project was supported within the FP2M federation (CNRS FR 2036), the project Labex MME-DII (ANR11-LBX-0023-01) and the \textit{Deutsche Forschungsgemeinschaft} (project numbers 444084038 and 443880457 within priority program SPP2265).


\bibliographystyle{plain}

\end{document}